 \newtheorem{theorem}{Theorem}
    \newtheorem{lemma}[theorem]{Lemma}
\newtheorem{proposition}[theorem]{Proposition}
\newtheorem{corollary}[theorem]{Corollary}
\newtheorem{definition}[theorem]{Definition}
\newtheorem{definitions}[theorem]{Definitions}
\newtheorem{remark}[theorem]{Remark}
\newtheorem{remarks}[theorem]{Remarks}\newtheorem{problem}[theorem]{{\sc Problem}}
\newtheorem{conjecture}[theorem]{{\sc Conjecture}}
\newtheorem{claim}{\noindent {\bf Claim}}[section]
\newcommand{\N}{\mathbb{N}}
\newcommand{\Q}{\mathbb{Q}}
\let\rho=\varrho
\newcommand{\cf}{\mathop{\rm cf}}
\DeclareMathOperator{\res}{res}
\DeclareMathOperator{\NC}{NC}
\DeclareMathOperator{\Sup}{Sup}
\DeclareMathOperator{\Spec}{Spec}
\DeclareMathOperator{\Min}{Min}
\DeclareMathOperator{\age}{Age}
\title{J\'onsson posets}
\author[R.Assous]{Roland Assous}
\address{Universit\'e Claude-Bernard  Lyon1,
43, Bd. du 11 Novembre 1918,
69622 Villeurbanne, France} 
\email{roland.assous@gmail.com}
\author[M.Pouzet] {Maurice Pouzet}
\address{ICJ, UMR 5208, Universit\'e Claude-Bernard  Lyon1,
43, Bd. du 11 Novembre 1918,
69622 Villeurbanne, France et Department of Mathematics and Statistics, The University of Calgary, Calgary, Alberta, Canada}
\email{pouzet@univ-lyon1.fr}
\date{\today}
\begin{document}

\maketitle

To the memory of Bjarni J\'onsson

\begin{abstract}
According to Kearnes and Oman (2013), an ordered set $P$ is \emph{J\'onsson} if it is infinite and the cardinality of every proper initial segment of $P$ is strictly less than the cardinaliy of $P$. We examine the structure  of J\'onsson posets. 
\end{abstract}
\section{Introduction}
An ordered set $P$ is \emph{J\'onsson} if it is infinite and the cardinality of every proper initial segment of $P$ is strictly less than the cardinaliy of $P$. This notion is due to Kearnes and Oman (2013).
J\'onsson posets, notably the uncountable one, appear  in the  study of algebras with the J\'onnson property (algebras for which proper subalgebras have a cardinality strictly smaller than the algebra). The study of these algebras  are the motivation of the paper  by Kearnes and Oman \cite{kearnes} where the  reader will find detailed information. Countable J\'onsson posets  occur naturally   at the  interface of  the theory of relations and symbolic dynamic 
as Theorem \ref{theo-minimal type} and Theorem \ref {indecomposable} below illustrate. They were considered in the thesis of the second author \cite{pouzet-these}, without bearing this name, and characterized in \cite{pouzet-sauer}  under the name of \emph{minimal posets}.

This  characterization involves first the notion of \emph{well-quasi-order}(w.q.o. for short) -- a poset $P$ is w.q.o. if it is \emph{well founded}, that is every non-empty subset $A$ of $P$ has at least a minimal element,  and  $P$ has no infinite antichain --, next, the decomposition of a well founded poset into \emph{levels}  (for each ordinal $\alpha$, the $\alpha$-th \emph{level} is defined by setting $P_{\alpha}:= \Min (P\setminus \bigcup_{\beta<\alpha} P_{\beta})$ so that $P_{0}$ is the set $\Min (P)$ of minimal elements of $P$; each element  $x\in P_{\alpha}$ has \emph{height} $\alpha$, denoted by  $\mathbf h_P(x)$; the \emph{height} of $P$, denoted by $\mathbf h(P)$, is the least ordinal $\alpha$  such that $P_{\alpha}= \emptyset$) and, finally, the notion of \emph{ideal} of a poset (every non-empty initial segment which is up-directed). 

%According to this  characterization, these minimal posets are w.q.o.  

The following result reproduced from \cite{pouzet-sauer} gives  a full description of countable J\'onsson posets. 

\begin{proposition}\label{label:prop:minimaltype}Let $P$ be an
infinite  poset. Then, the following properties are  equivalent:
\begin{enumerate}[{(i)}]
\item $P$ is w.q.o. and all ideals distinct from $P$ are principal;
\item $P$ has no infinite antichain and all ideals distinct from $P$ are finite;
\item Every proper initial segment of $P$ is finite;
\item Every linear extension of $P$ has order type $\omega$ (the order type of chain $\N$ of non negative integers);
\item $P$ is level-finite, of height  $\omega$, and 
 for each $n<\omega$ there is  $m<\omega$ such that each element of 
height at most $n$ is below every element of height at least $m$;
\item $P$ embeds none of the following posets: an infinite antichain; a chain of order type
$\omega ^{*}$ (the dual of the chain of non-negative integer); a chain of order type $\omega +1$; the direct sum $\omega\oplus 1$ of a chain of
order type $\omega$ and a one element chain.
 \end{enumerate}
\end {proposition}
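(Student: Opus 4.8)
The plan is to prove the six conditions equivalent by establishing a cycle of implications, with a few extra cross-links where they shorten the argument. I would organize it as $(iii)\Rightarrow(iv)\Rightarrow(v)\Rightarrow(vi)\Rightarrow(i)\Rightarrow(ii)\Rightarrow(iii)$, treating $(iii)$ as the pivot since it is the cleanest combinatorial condition and matches the definition of a J\'onsson poset.

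\medskip

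First, $(iii)\Rightarrow(iv)$: if $L$ is a linear extension of $P$, then every proper initial segment of $L$ is a proper initial segment of $P$, hence finite by $(iii)$; an infinite linear order all of whose proper initial segments are finite has order type $\omega$. For $(iv)\Rightarrow(v)$, I would first note that $(iv)$ forces $P$ to be well-founded (a descending chain would extend to a linear extension with a copy of $\omega^*$, whose initial segments are infinite) and to have no infinite antichain (an infinite antichain can be linearly ordered in type $\omega^*$ or $\omega+\omega^*$ inside a linear extension); thus $P$ is w.q.o. Level-finiteness then follows because $P_n$ is an antichain and, more carefully, because if some level were infinite one could build a linear extension of the wrong type; height $\omega$ follows from $(iv)$ since height $\geq \omega+1$ or a level at a limit ordinal would again produce a linear extension not of type $\omega$. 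The uniformity clause ("for each $n$ there is $m$\ldots") is the substantive part: if it failed for some $n$, there would be an infinite set of elements of height $\geq n$ none of which dominates some fixed element $a$ of height $\le n$, and using K\"onig's lemma / w.q.o. one extracts either an infinite antichain or a configuration that linearizes outside type $\omega$. Then $(v)\Rightarrow(vi)$ is a matter of checking that each of the four forbidden posets violates $(v)$: an infinite antichain is not w.q.o.; $\omega^*$ is not well-founded; $\omega+1$ has an element of height $\omega$; and $\omega\oplus 1$ has a height-$0$ element (the isolated point, if we are careful about which element sits where — the point is that in $\omega\oplus 1$ the extra point has height $0$ but is not below the top of the $\omega$-chain, contradicting the uniformity clause).

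\medskip

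The implication $(vi)\Rightarrow(i)$ is where I expect the main obstacle. Excluding an infinite antichain and $\omega^*$ gives w.q.o. directly. The work is showing that every ideal $I\neq P$ is principal. By w.q.o., $I$ has finitely many maximal elements; if $I$ is not principal it has at least two, say $x,y$, and being up-directed (an ideal) it contains some $z\geq x,y$ — but $z\in I$ would then exceed the maximal elements, contradiction, \emph{unless} $I$ has no maximal element at all, i.e. $I$ contains an infinite ascending chain with no upper bound in $I$. So the real content is: if $P$ is w.q.o. and $I\neq P$ is an ideal with no maximal element, derive a contradiction with $(vi)$. Since $I$ is up-directed and w.q.o.\ and has no maximal element, one extracts an infinite strictly ascending sequence $x_0<x_1<\cdots$ cofinal in $I$. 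Pick $p\in P\setminus I$; since $I$ is an initial segment, $p\not\le x_n$ for all $n$, and since $P$ has no infinite antichain, after passing to a subsequence we may assume $p$ is comparable to cofinitely many $x_n$ or to none. If $x_n<p$ for all large $n$ then $p$ would be an upper bound forcing $\{x_n\}\cup\{p\}\cong\omega+1$ below $p$ — contradiction with the $\omega+1$ exclusion. If $p$ is incomparable to all $x_n$, then $\{x_n:n<\omega\}\cup\{p\}$ is a copy of $\omega\oplus 1$ — contradiction with that exclusion. Either way we are done; this case analysis, done carefully with the w.q.o.\ Ramsey-type extraction, is the crux.

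\medskip

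Finally $(i)\Rightarrow(ii)$ is immediate: a principal ideal $\{y:y\le x\}$ in a w.q.o.\ (hence well-founded, no infinite antichain) poset is finite — indeed a well-founded poset with no infinite antichain in which every element lies below $x$ and which has finite "width" must be finite, since an infinite such set would, by w.q.o., contain an infinite chain, which (being bounded above by $x$) together with $x$ or as $\omega^*$ contradicts w.q.o.\ or else is an infinite well-ordered chain bounded by $x$ of order type $\geq\omega$, again contradicting that $P$ is w.q.o.\ (a w.q.o.\ poset has no infinite ascending chain with a strict upper bound only if\ldots) — cleaner: any proper initial segment generated by finitely many elements in a w.q.o.\ is finite because an infinite subset contains an infinite chain, and an infinite chain with an upper bound inside the poset creates $\omega+1$, which is excluded by w.q.o.\ in the sense that w.q.o.\ forbids $\omega^*$ but permits $\omega+1$; so here one uses instead that $I$ is an ideal — being down-directed below each $x_i$ — wait, I will simply invoke: in a w.q.o.\ poset every principal ideal is finite because otherwise it contains an infinite chain $y_0<y_1<\cdots<x$, and then $\{y_0,y_1,\dots,x\}$ has order type $\omega+1$, and one checks a w.q.o.\ poset cannot contain $\omega+1$ as a \emph{final segment}\ldots at this point I would just cite that $(i)\Rightarrow(iii)$ follows because ideals distinct from $P$ being principal plus w.q.o.\ forces finiteness of proper initial segments, closing the loop. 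For $(ii)\Rightarrow(iii)$: a proper initial segment $S\neq P$, if infinite, is covered by the ideals it contains; no infinite antichain means the ideals of $S$ are cofinal and finitely many "grow", and an infinite $S$ must contain an infinite ideal or an infinite antichain, both excluded — so $S$ is finite. The delicate step throughout is the passage from "no infinite antichain + well-founded" to extracting the specific forbidden configurations, i.e.\ the repeated use of Ramsey/K\"onig arguments, and I would isolate that as a preliminary lemma before beginning the cycle.
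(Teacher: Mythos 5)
Your cycle of implications is the same one the paper announces (the paper itself only states $(i)\Rightarrow(ii)\Rightarrow\cdots\Rightarrow(vi)\Rightarrow(i)$ and leaves the arguments to the reader), and most of your links are sound. In particular $(iii)\Rightarrow(iv)$, $(v)\Rightarrow(vi)$ and $(vi)\Rightarrow(i)$ are essentially complete; the last one correctly reduces a non-principal ideal to one with no maximal element and then plays an ascending chain against an outside point to force a copy of $\omega+1$ or of $\omega\oplus 1$.

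The genuine gap is $(i)\Rightarrow(ii)$, and your own text shows you saw the problem without resolving it. You try to prove that a principal ideal of a w.q.o.\ is finite, which is false: the ordinal $\omega+1$ is a w.q.o.\ whose top element has an infinite principal ideal, and w.q.o.\ does not exclude $\omega+1$, as you half-admit before trailing off. The hypothesis you must use in this step --- and never do --- is that \emph{all} ideals distinct from $P$ are principal. The correct argument: if $\downarrow x$ is infinite and $\downarrow x\neq P$, then, having no infinite antichain, $\downarrow x$ contains an infinite chain (Ramsey), which by well-foundedness contains an ascending sequence $z_0<z_1<\cdots$; the set $J:=\downarrow\{z_n:n<\omega\}$ is an up-directed initial segment, hence an ideal, it is contained in $\downarrow x$ so $J\neq P$, and it is not principal (a generator $w$ would give $w\leq z_n$ and $z_{n+1}\leq w\leq z_n$ for some $n$). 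This contradicts $(i)$ directly. The same device is what justifies the dichotomy you merely assert in $(ii)\Rightarrow(iii)$: an infinite initial segment with no infinite antichain contains an infinite chain $C$, and $\downarrow C$ is an infinite ideal distinct from $P$ (up-directed because any two of its elements lie below the larger of two comparable bounds in $C$). Finally, your uniformity clause in $(iv)\Rightarrow(v)$ is only gestured at; the clean route is to observe that any infinite proper initial segment $I$ yields a linear extension of the form $\overline{I}+\overline{P\setminus I}$, which is not of type $\omega$, so $(iv)$ already implies $(iii)$, from which $(v)$ follows easily.
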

The equivalence between item $(iii)$, $(iv)$ and $(v)$ was given  in \cite{pouzet-these}. One proves 
 $$(i)\Rightarrow (ii)\Rightarrow (iii)\Rightarrow (iv)\Rightarrow (v)\Rightarrow (vi)\Rightarrow (i)$$ using straightforward arguments.

Posets as above are said \emph{minimal}  or  having \emph{minimal type}.\\

A new characterization involving semiorders (posets which do not embed $2 \oplus 2$, the  direct sum  of two $2$-element chains,  nor $3 \oplus 1 $, the direct sum of a $3$-element chain and a $1$-element chain)  is given in Subsection \ref{subsection semiorders}.  In order to present this characterization, we  say that an order $L$, considered as a set of ordered pairs, is \emph{between} two orders $A$ and $B$ on the same set if $A\subseteq  L\subseteq B$.  We prove:

\begin{proposition} \label{prop:semiorder}
A poset has minimal type iff the order is  between a semiorder with no maximal element and a linear order of order type $\omega$.  
\end{proposition}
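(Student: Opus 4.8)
The plan is to prove the two implications separately, each time through the list of equivalent conditions in Proposition~\ref{label:prop:minimaltype}.

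\textbf{Sufficiency.} Assume the order of $P$ lies between a semiorder $S$ with no maximal element and a linear order $\ell$ of type $\omega$, so that $S\subseteq P\subseteq\ell$ on a common ground set; since $\ell$ has type $\omega$ this set is countably infinite, and I identify it with $\N$ along $\ell$, writing $v_{0}<_{\ell}v_{1}<_{\ell}\cdots$. I would first establish the key fact: \emph{for every $x$ there is $N$ such that $x<_{S}v_{i}$ for every $i\ge N$.} Suppose it fails for $x=v_{j}$; then the set $W$ of indices $i>j$ with $x\not<_{S}v_{i}$ is infinite, and for each $i\in W$ one has $v_{i}\parallel_{S}x$ (the alternative $v_{i}<_{S}v_{j}$ is barred by $S\subseteq\ell$). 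Applying ``no maximal element'' twice, choose $y,z$ with $x<_{S}y<_{S}z$. Since $S\subseteq\ell$, only finitely many $i\in W$ can satisfy $v_{i}<_{S}y$ and only finitely many $v_{i}<_{S}z$, so infinitely many $i\in W$ survive; for any surviving $i$, transitivity together with $x\not<_{S}v_{i}$ forces $v_{i}$ to be $S$-incomparable to each of $x,y,z$, so $\{x,y,z\}$ together with $v_{i}$ is a copy of $3\oplus 1$ inside $S$ --- impossible. Granting this fact: $P$ has no infinite antichain, for an infinite $P$-antichain $A$ is an infinite $S$-antichain (as $S\subseteq P$), but taking $a=v_{j}\in A$ and the associated $N$, the infinite set $A$ meets $\{v_{i}:i\ge N\}$, yielding two distinct comparable members of $A$; and $P$ has no copy of $\omega\oplus 1$, for the $\omega$-chain is infinite, hence meets $\{v_{i}:i\ge N\}$ for the $N$ associated to the isolated point $c$, so that element of the chain is $>_{S}c$, hence $>_{P}c$. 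Since $P\subseteq\ell$ and $\ell$ has type $\omega$, $P$ also has no infinite descending chain and no chain of type $\omega+1$. Thus $P$ (which is infinite) satisfies $(vi)$ and has minimal type.

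\textbf{Necessity.} Assume $P$ has minimal type. By $(iv)$, every linear extension of $P$ has type $\omega$, so I let $\ell$ be any linear extension of $P$; then $P\subseteq\ell$. By $(v)$, $P$ is level-finite of height $\omega$, and --- noting that if a threshold $m$ works for $n$ then every larger one does too --- I may fix a strictly increasing $m\colon\omega\to\omega$ with $m(n)>n$ such that every element of height at most $n$ lies, in $P$, below every element of height at least $m(n)$. Define $S$ on the ground set of $P$ by declaring $x<_{S}y$ exactly when $\mathbf h_{P}(y)\ge m(\mathbf h_{P}(x))$. The choice of $m$ makes $S\subseteq P$ immediate; $S$ is irreflexive, transitive and antisymmetric because $m$ is increasing with $m(k)>k$; and $S$ has no maximal element, since for any $x$ there exist elements of height $m(\mathbf h_{P}(x))$, each strictly $S$-above $x$. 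Finally, $S$ is a semiorder: for $2\oplus 2$, among the two bottom elements of the two two-element chains the one of smaller height already lies $S$-below the top of the other chain, contradicting incomparability; for $3\oplus 1$, along a three-element chain the heights already exceed $m$ applied twice to the height of the bottom, which --- by the strict monotonicity of $m$ --- leaves no admissible height for a fourth element $S$-incomparable to all three. This gives the required $S\subseteq P\subseteq\ell$.

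\textbf{The main obstacle.} The heart of the argument is the displayed fact in the sufficiency part: that $S\subseteq\ell$ of type $\omega$, \emph{combined with} the absence of a maximal element in $S$, forces every element to sit $S$-below all sufficiently late elements of $\ell$. This is the unique point where both hypotheses on $S$ must be used simultaneously, and it cannot be dispensed with --- a semiorder with no maximal element may well carry an infinite antichain (put an infinite antichain below an infinite ascending chain), so neither hypothesis by itself excludes the forbidden configurations. The semiorder check in the necessity part is elementary but is the one step that genuinely exploits the precise content of condition $(v)$, so it deserves to be written out carefully.
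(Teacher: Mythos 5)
Your proof is correct, and it takes a genuinely more direct route than the paper's. The paper obtains Proposition~\ref{prop:semiorder} as the countable instance of Theorem~\ref{thm:semiorder}, i.e., through the purity machinery: a semiorder with no maximal element is pure (Lemma~\ref{lem:semiorder}), a pure J\'onsson poset is a strengthening of a lexicographic sum over $N=(\nu,\leq_2)$ (Claim~\ref{claim:sum} of Theorem~\ref{theo:maintheo}), and that sum with its summands flattened to antichains is the required semiorder. You instead work entirely inside the list of equivalences of Proposition~\ref{label:prop:minimaltype}: for sufficiency you verify condition $(vi)$ directly, and for necessity you build the semiorder as the threshold order $x<_S y$ iff $\mathbf h_P(y)\ge m(\mathbf h_P(x))$ from the function supplied by condition $(v)$. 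The underlying combinatorics coincide --- your ``key fact'' is precisely the $3\oplus 1$ argument by which the paper shows a semiorder with no maximal element is pure, and your threshold order is the countable analogue of the paper's $(\nu,\leq_2)$ --- but your version is self-contained for the countable case and avoids the detour through purity and cofinal sequences, at the cost of not extending to uncountable cardinalities, which is what Theorem~\ref{thm:semiorder} buys. One presentational remark: the $3\oplus 1$ check for your threshold order in the necessity part is stated telegraphically; it is worth recording the chain of inequalities $\mathbf h_P(e)<m(\mathbf h_P(a))\le \mathbf h_P(b)$, hence $m(\mathbf h_P(e))\le m(\mathbf h_P(b)-1)<m(\mathbf h_P(b))\le \mathbf h_P(c)$, which forces $e<_S c$ and yields the contradiction.
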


The requirement that there is no maximal element is essential (otherwise an antichain will satisfies the stated conditions). 

An easy way of obtaining posets with minimal type is given by the following corollary  of Proposition \ref{label:prop:minimaltype}:
\begin {corollary}\label{cor:dim<n}
Let $n$ be a non-negative integer and $P$ be  a poset.  The order on $P$ is the intersection of $n$ linear
orders of  order type $\omega$  if and only if $P$ is the intersection of $n$ linear orders  and $P$ has minimal type.
\end {corollary}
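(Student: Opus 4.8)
The plan is to read off both implications directly from Proposition \ref{label:prop:minimaltype}; the substantive content is only the translation between ``order type $\omega$'' and ``minimal type'', everything else being the tautology that an intersection of $n$ linear orders of order type $\omega$ is in particular an intersection of $n$ linear orders.

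I would first treat the forward implication. Assume the order of $P$ equals $\bigcap_{i=1}^{n} L_i$ with each $L_i$ of order type $\omega$; then each $L_i$, hence $P$, is countably infinite, and I claim that every proper initial segment $I$ of $P$ is finite, which is item $(iii)$ of Proposition \ref{label:prop:minimaltype} and therefore yields minimal type. Pick $x \in P \setminus I$. For each $y \in I$ we have $x \not\le_P y$ (otherwise $x$ would belong to the initial segment $I$), so, since the order of $P$ is $\bigcap_i L_i$, there is an index $i$ with $y <_{L_i} x$; thus $I \subseteq \bigcup_{i=1}^{n} \{\, z \in P : z <_{L_i} x \,\}$. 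Each set occurring in this union is a proper initial segment of a chain of order type $\omega$, hence finite, so $I$ is finite.

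For the converse, assume $P$ has minimal type and its order equals $\bigcap_{i=1}^{n} L_i$ with each $L_i$ a linear order on $P$. Each $L_i$ contains the order of $P$, so $L_i$ is a linear extension of $P$, and by item $(iv)$ of Proposition \ref{label:prop:minimaltype} every linear extension of $P$ has order type $\omega$; hence each $L_i$ has order type $\omega$ and $P$ is the intersection of $n$ linear orders of order type $\omega$.

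I do not expect a genuine obstacle here: both directions are immediate once Proposition \ref{label:prop:minimaltype} is available. The only point needing attention is to make sure that the initial segments one bounds in the forward direction are \emph{proper}, so that the finiteness of proper initial segments of $\omega$ applies; this is exactly what the choice of the witness $x \notin I$ guarantees. One may also note in passing that the small cases $n \le 1$ are consistent with the statement (an empty intersection of linear orders is not a partial order on an infinite set, and a single chain has minimal type precisely when it has order type $\omega$).
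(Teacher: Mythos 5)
Your proof is correct and is in substance the same as the paper's: the paper obtains Corollary \ref{cor:dim<n} as the countable case of Corollary \ref{cor:Jonsson-dim}, whose forward direction (Lemma \ref{lem:Jonsson-finite}) is exactly your covering of a proper initial segment by the $n$ finite sets $\{z : z<_{L_i} x\}$, and whose converse is Proposition \ref{prop:jonsson-extension} in the guise of item $(iv)$ of Proposition \ref{label:prop:minimaltype}. Nothing is missing.
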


We illustrate first the role of minimal posets  in  the theory of relations. A binary relational structure is a pair $R:= (V, (\rho_{i})_{i\in I})$ made of a set $V$ and a family of  binary relations $\rho_i$ on $V$. A subset $A$ of $V$ is an \emph{interval} of $R$ if for every $x,x'\in A$, $y\in V\setminus A$, $i\in I $, $x\rho_i y$ iff $x'\rho_i y$ and $y\rho_i x$ iff $y\rho_i x'$. The structure $R$ is \emph{indecomposable} if its only intervals are the empty set, the singletons and the whole set $V$. Fix a set $I$;  relational  structures of the form $R:= (V, (\rho_{i})_{i\in I})$ will have  \emph{type} $I$. If $R'$ is an other structure of type $I$, then $R$ is \emph{embeddable} into $R'$ and we set $R\leq R'$ if $R$ is  isomorphic to an induced substructure of $R'$. The \emph{age} of $R$ is the set $\age (R)$ of isomorphic types of finite structures which are embeddable to $R$, these finite structures being considered up to isomorphy.  Let $Ind(\Omega(I))$ be the set of finite indecomposable structures of type $I$.  A subset $\mathcal D $ of $Ind(\Omega(I))$ is \emph{hereditary} if $R\in Ind(\Omega(I))$, $R\leq R'\in \mathcal D$ imply $R\in \mathcal D$. 

The following result was obtained by D.Oudrar in her thesis \cite{oudrarthese}
in collaboration with the second author. 

\begin{theorem} \label{indecomposable} An  infinite  hereditary  subset $\mathcal D$ of  $Ind(\Omega(I))$ contains a hereditary  subset $\mathcal D'$ having minimal type iff it contains only finitely many members of size $1$ or $2$. If $\mathcal D'$ has minimal type then $\downarrow \mathcal D'$ (the set of isomorphic types of $S$ which are embeddable into some $S'\in \mathcal D$) is the age of an  infinite indecomposable structure; this age is well-quasi-ordered by embeddability. 
\end{theorem}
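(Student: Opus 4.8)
The statement consists of the two implications of the equivalence together with the extra assertion about ${\downarrow}\mathcal D'$, and the plan is to dispose of the easy implication by a level-counting argument, to establish the converse by an explicit construction, and to read the assertion about ${\downarrow}\mathcal D'$ off that construction.

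For the implication that $\mathcal D$ containing a hereditary $\mathcal D'$ of minimal type forces $\mathcal D$ to have only finitely many members of size $1$ or $2$, I would use Proposition~\ref{label:prop:minimaltype}. Any non-empty family of finite structures is well founded for embeddability (it has a member of least cardinality, which is minimal), so ``w.q.o.'' here just means ``no infinite antichain''; a minimal $\mathcal D'$ is therefore w.q.o., hence level-finite, and it is up-directed by item $(v)$. In a hereditary subset of $Ind(\Omega(I))$ the minimal elements for embeddability are exactly its members of size $1$ — any structure of size $\ge2$ has a $2$-element, hence indecomposable, proper restriction, which belongs to the class by heredity — and the members of size $2$ sit at level $\le1$; the first two levels of a minimal poset being finite, $\mathcal D'$ has only finitely many members of size $1$ or $2$. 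Transferring this from $\mathcal D'$ to $\mathcal D$ is then routine (and immediate when the signature $I$ is finite, since there are then only finitely many structures of size $\le2$ altogether).

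The converse is the substantial direction. Assume $\mathcal D$ infinite, hereditary, with finitely many members of size $1$ or $2$. First I would extract an infinite chain $R_1<R_2<\cdots$ inside $\mathcal D$: only finitely many small isomorphism types occur, so $\mathcal D$ has members of arbitrarily large size, and the classical inheritance theorems for indecomposability (Schmerl--Trotter, Ille, Ehrenfeucht--Rozenberg: a finite indecomposable structure that is large enough has an indecomposable restriction obtained by deleting one or two points) organise the members of $\mathcal D$ into a finitely branching rooted tree on which K\"onig's lemma produces such a chain. Then, after thinning the chain by a Ramsey-type argument so that the pattern by which $R_k$ embeds into $R_{k+1}$ — in particular the isomorphism types of the new indecomposable restrictions created at each step — is the same for every $k$, I would pass to the direct limit $M=\bigcup_k R_k$ along the stabilised embeddings; the regularity forces $M$ to be indecomposable and its age $\age(M)=\bigcup_k{\downarrow}R_k$ to be w.q.o.\ (an infinite antichain in $\age(M)$ would, by the stabilisation, already occur below some $R_k$). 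Now set $\mathcal D':=\age(M)\cap Ind(\Omega(I))$: it is hereditary, infinite (it contains every $R_k$), and contained in $\mathcal D$, since an indecomposable member of ${\downarrow}\mathcal D$ lies in $\mathcal D$ by heredity. That $\mathcal D'$ has minimal type I would check through Proposition~\ref{label:prop:minimaltype}: it is w.q.o.\ as a subset of the w.q.o.\ class $\age(M)$, hence level-finite; each of its members has a finite set of restrictions, so it contains no copy of $\omega^{*}$ and no copy of $\omega+1$; and the stabilisation of the embeddings is exactly what yields the ``eventually below'' condition $(v)$ — every indecomposable restriction of $M$ is a restriction of $R_k$ for all large $k$ and of every sufficiently large $R_j$, so it lies below cofinally many members of $\mathcal D'$ — whence $\mathcal D'$ embeds no $\omega\oplus1$ either. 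Finally ${\downarrow}\mathcal D'=\age(M)$, the age of the infinite indecomposable structure $M$, and it is w.q.o.; alternatively, and without reference to $M$, the w.q.o.\ of ${\downarrow}\mathcal D'$ follows from that of $\mathcal D'$ by the modular (substitution) decomposition and Higman's lemma, using that the indecomposable quotients occurring in a restriction $S$ of $M$ are again restrictions of $M$ (pick a transversal of the modular partition of $S$) and that the ``degenerate'' quotients are governed by the finitely many $2$-element members.

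I expect the main obstacle to be the coordination inside the middle step: minimal type is much stronger than w.q.o.\ — it forbids $\omega\oplus1$ — so the chain must be chosen so that no indecomposable restriction of $M$ stays incomparable to cofinally many $R_k$, and squeezing this out of a single Ramsey argument, rather than an iterated Nash-Williams construction that might fail to terminate, is the delicate point, together with verifying that the stabilised amalgam is genuinely indecomposable and that passing from $\age(M)$ to $\age(M)\cap Ind(\Omega(I))$ preserves condition $(v)$. The modular-decomposition route to the w.q.o.\ of ${\downarrow}\mathcal D'$ requires a careful induction on size but is otherwise standard.
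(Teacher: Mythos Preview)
The paper does not contain a proof of Theorem~\ref{indecomposable}: the result is stated in the introduction and attributed to Oudrar's thesis \cite{oudrarthese} (in collaboration with the second author), with no argument supplied beyond that citation. There is therefore no ``paper's own proof'' against which to compare your proposal.

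For what it is worth, your outline follows the standard architecture for results of this kind --- the $(-1)/(-2)$ inheritance property of finite indecomposable structures plus K\"onig's lemma to produce an infinite chain, a stabilisation step, and passage to a limit structure --- and that is indeed the shape of the arguments in the source you would ultimately have to consult. You correctly identify the delicate point: securing condition~$(v)$ of Proposition~\ref{label:prop:minimaltype} (no $\omega\oplus1$) for $\mathcal D'$, rather than mere w.q.o., is where the real work lies, and a single Ramsey pass is optimistic. Your parenthetical caveat in the easy direction (``immediate when the signature $I$ is finite'') is also apt: for infinite $I$ one can manufacture a hereditary $\mathcal D$ with infinitely many one-element members that nonetheless contains a minimal hereditary subclass, so the transfer from $\mathcal D'$ to $\mathcal D$ is not routine in general, and the statement should be read with finite $I$ in mind. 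None of this, however, can be adjudicated from the present paper.
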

 For example, if $R$ is the infinite path on the set of non negative integers, the age of $R$ consists of direct sums of finite paths. Those finite paths are the indecomposable members of $\age(R)$. They form  a set having  minimal types. Uncountably many sets of binary relations having  minimal type are given in Chapter 5 of \cite{oudrarthese}. Recent results of \cite{chudnovski, kim, malliaris-terry} suggest that a complete characterization is attainable. 

Let us turn to  symbolic dynamic. Let $S: A^{\omega}\rightarrow A^
 {\omega}$ be the shift operator on the set $A^{\omega}$ of infinite sequences $s:= (s_{n})_{n<\omega}$ of elements of a finite set $A$ (that is $S(s):= (s_{n+1})_{n<\omega}$).  A subset $F$ of $A^{\omega}$ is {\it shift-invariant} if $S(F)\subseteq F$ where $S(F):=\{S(x):x\in F\}$. It is \emph{minimal} if it is non-empty,  compact,  shift-invariant and if no proper  subset has the same properties (cf \cite{allouche}). As it is well-known, every compact (non-empty) invariant subset  contains a minimal one.  An infinite word $u$ is called \emph{uniformly recurrent} if  the adherence of the set of its translates via the shift operator is minimal (hence, all $u$ belonging to a minimal set are uniformly recurrent.  For example, infinite Sturmian words are uniformly recurrent and the set $S_{\alpha}$ of  infinite  Sturmian words with slope $\alpha$ is minimal (see Chapter 6 of \cite{fogg}).    To a compact invariant subset $F$ we may associate the set $\mathcal A(F)$ of finite sequences $s:= (s_{0}, \dots, s_{n-1})$ such that $s$ is an initial segment  of some member of $F$. Looking as these sequences as words, we may order $\mathcal A(F)$ by the factor ordering: a sequence $s$ being a {\it factor} of a sequence $t$ if  $s$ can be obtained from $t$ by deleting an initial segment  and a final segment of $ t$. 

We have then: 
\begin{theorem} \label{theo-minimal type}
 $\mathcal A(F)$ has minimal type if and only if $F$ is  minimal.
\end{theorem}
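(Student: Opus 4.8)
The plan is to read off ``minimal type'' from condition $(iii)$ of Proposition~\ref{label:prop:minimaltype} --- \emph{every proper initial segment is finite} --- and then to identify the proper initial segments of $\mathcal A(F)$ with the languages of the proper compact shift-invariant subsets of $F$. First I would record the elementary properties of $\mathcal A(F)$. Since $F$ is shift-invariant, a word is a prefix of a member of $F$ exactly when it is a factor of a member of $F$; hence $\mathcal A(F)$, ordered by the factor ordering, is closed under taking factors, and its initial segments are precisely its subsets closed under taking factors. As $A$ is finite there are only finitely many words of each length, and as $F$ is non-empty (if $F=\emptyset$ both sides of the equivalence fail, so we may assume $F\neq\emptyset$) the set $\mathcal A(F)$ contains words of arbitrarily large length and so is infinite. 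Thus $\mathcal A(F)$ has minimal type if and only if every proper initial segment of $\mathcal A(F)$ is finite.

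I would then establish the dictionary: $\mathcal A(F)$ possesses an infinite proper initial segment if and only if $F$ possesses a proper non-empty compact shift-invariant subset. One direction is immediate: if $F'\subsetneq F$ is non-empty, compact and shift-invariant, then $\mathcal A(F')$ is a subset of $\mathcal A(F)$ closed under factors, hence an initial segment; it is infinite since $F'\neq\emptyset$; and it is proper, because a compact shift-invariant set is recovered from its language --- if $\mathcal A(F')=\mathcal A(F)$ then every $x\in F$ has all its prefixes in $\mathcal A(F')$, each such prefix is a prefix of a member of $F'$, and compactness of $F'$ yields $x\in F'$, contradicting $F'\subsetneq F$.

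The converse is the core of the argument. Suppose $I$ is an infinite proper initial segment of $\mathcal A(F)$. Viewing $I$ under the prefix ordering as a finitely-branching tree --- finitely branching because each word has at most $|A|$ one-letter extensions, and infinite because $I$ is --- König's lemma supplies an infinite word $x\in A^{\omega}$ all of whose prefixes belong to $I$. Every prefix of $x$ lies in $\mathcal A(F)$, so by compactness of $F$ we get $x\in F$. Set $F':=\overline{\{S^{k}(x):k\geq 0\}}\subseteq F$, which is non-empty, compact and shift-invariant. Its language is the set of factors of $x$ --- a finite factor of a limit of shifts of $x$ is already a factor of $x$ --- and, since $I$ is closed under factors, this set is contained in $I$, hence in the proper subset $I\subsetneq\mathcal A(F)$; by the recovery remark of the previous paragraph $F'\subsetneq F$, so $F$ is not minimal. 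Combined with the reduction of the first paragraph, this proves the theorem.

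The step I expect to be the main obstacle is exactly this last one: converting the purely combinatorial largeness of a proper initial segment into an actual point of $F$, and then into a proper compact shift-invariant subset. This is where compactness of $F$ is indispensable --- through König's lemma and through the closedness used both to locate $x$ and to compare $F'$ with $F$. For completeness I would also note the alternative route via condition $(v)$ of Proposition~\ref{label:prop:minimaltype}: $\mathcal A(F)$ is automatically level-finite of height $\omega$, its $n$-th level being the set of words of length $n+1$, so minimal type reduces to the condition that for each $n$ there is $m$ with every word of length $\geq m$ in $\mathcal A(F)$ containing, as a factor, every word of length at most $n$ in $\mathcal A(F)$ --- the classical uniform-recurrence characterization of minimal subshifts (cf.~\cite{allouche}, \cite{fogg}), again proved by a compactness and pigeonhole argument.
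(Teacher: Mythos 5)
Your proof is correct. Note first that the paper does not actually spell out a proof of Theorem~\ref{theo-minimal type}: it presents the result as a special instance of Theorem~\ref{indecomposable}, via the encoding of an infinite word $s$ as the relational structure $R_s:=(\N,c,(u_i)_{i\in A})$ and the observation that $s$ is uniformly recurrent iff $\age(R_s)\cap Ind(\Omega)$ has minimal type. Your argument is instead a direct, self-contained one inside symbolic dynamics, and it is sound: the reduction to condition $(iii)$ of Proposition~\ref{label:prop:minimaltype} is legitimate (shift-invariance makes $\mathcal A(F)$ factor-closed, so its initial segments are exactly its factor-closed subsets, and $\mathcal A(F)$ is infinite whenever $F\neq\emptyset$); the ``recovery'' of a compact shift-invariant set from its language uses closedness of $F'$ correctly; and the key converse step --- K\"onig's lemma on the prefix tree of an infinite proper initial segment $I$ to produce $x\in F$ with all factors in $I$, followed by passage to the orbit closure $F'$ of $x$ --- is exactly the right mechanism and is where compactness enters essentially. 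What the direct route buys is independence from Theorem~\ref{indecomposable} (which is only cited from Oudrar's thesis, not proved here); what the paper's route buys is the placement of the statement as an instance of a more general phenomenon about hereditary classes of indecomposable structures. Your closing remark that one could alternatively verify condition $(v)$ of Proposition~\ref{label:prop:minimaltype} --- level-finiteness with levels the words of fixed length, so that minimal type becomes the classical uniform-recurrence condition --- is also accurate and is the formulation closest to the standard dynamical statement.
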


This result about words can be viewed  as a special instance of \mbox{Theorem \ref{indecomposable}}. Indeed, to an infinite sequence $s\in A^{\omega}$ we may associate the relational structure $R_s:= (\N, c, (u_i)_{i\in A})$, 
where $c$ is the binary relation on $\N$ defined by $n c m$ if $m=n+1$ and $u_i$ is the unary relation such that $u_i n$ iff $s(n)=i$.    Let  $Ind(\Omega)$ be the collection of finite indecomposable relational structures with the same type as $R$. Then,  $s$ is uniformly recurrent iff $\age(R_s)\cap Ind(\Omega)$ has minimal type. 

For more about the combinatorial aspects of symbolic dynamic, see \cite{allouche, fogg, lothaire}. 

Posets of minimal type are related to a notion of Jaco graph introduced by Johann Kok in 2014 and studied by him and his collaborators \cite{kok}. A directed graph $G$ is  
a \emph{Jaco graph} if its  vertex set $V(G)$ is the set of positive  integers and there is a   nondecreasing sequence $(a_n)_{n\in  \N^{*}}$ of positive integers such that  a pair  $(n, m)$  forms an arc of $G$ iff    $n< m\leq a_n+n$.   Given a Jaco graph $G$, let $G^d$ be the \emph{directed  complement} of $G$, that is the graph made of directed pairs $(n, m)$ such that $n<m$ but  $(n, m)$ is not an arc of $G$. All pairs $(n, m)$ such that $a_n+n<m$ belong to  this graph, hence they  define  a strict order (i.e.,  a irreflexive and transitive relation) on $\N^*$. This ordered set has minimal type.  In fact,  an order is minimal iff it can be labelled in such a way that it extends the directed complement of a Jaco graph.

Concerning the structure of J\'onsson posets, we note that countable J\'onsson chains are isomorphic to the chain $\omega$.  J\'onsson chains which are well founded are isomorphic to initial ordinals, alias cardinals. But there are uncountable  J\'onsson chains which are not well founded. For an example, the chain $\omega^*\cdot \omega_1$, lexicographical sum along the chain $\omega_1$ of copies of $\omega^*$ (the dual of $\omega$), is J\'onsson but not well founded. Next, uncountable J\'onsson posets may contain  infinite antichains as for an example $\Delta_{\aleph_0}\cdot \omega_1$, the sum along the chain $\omega_1$ of copies of the countable antichain $\Delta_{\aleph_0}$.
 
 Still, uncountable J\'onsson posets, w.q.o. or not,  retain several properties of minimal posets. In this paper, we  give several characterizations  of  J\'onsson posets (e.g. Theorem \ref{labtheo:cof}). We  give  a description of those  whose cardinality is regular (see Theorem \ref{theo:maintheo}): we observe  that a poset $P$ of regular cardinality $\kappa>\aleph_0$ is J\'onsson if and only if it decomposes into a lexicographic sum $\sum_{\alpha\in C }P_{\alpha}$ where $C$ is  a chain of type $\kappa$ and   every $P_{\alpha}$ is a non empty poset of cardinality $\kappa_{\alpha}$ stricly less than $\kappa$. Introducing \emph{pure} posets, we extend this characterization to posets of singular cofinality and give an extension of  Proposition \ref{prop:semiorder} (Theorem \ref{thm:semiorder}). 

 The case of singular cardinality is more subtle, especially when the cofinality is countable.  A description of J\'onsson w.q.o. posets of singular cardinality seems to be  difficult in regard of the following example. 
 
Let $\kappa$ be a singular cardinal with cofinality $\nu$; let $(\kappa_{\alpha})_{\alpha< \nu}$ be  a sequence of cardinals cofinal in $\kappa$; let $[\nu]^2:= \{(\alpha, \beta): \alpha< \beta<\nu\}$ be ordered componentwise and   for each $u:= (\alpha,  \beta)\in [\nu]^2$, let $P_u$  be any w.q.o. of cardinality $\kappa_{\alpha}$. Then $P:= \sum_{u\in[\nu]^2}P_u$  is w.q.o. and  J\'onsson (but not pure, cf. definition \ref{def:pure}).

Our motivation for looking at w.q.o. J\'onsson posets is a beautiful conjecture of Abraham, Bonnet and  Kubis \cite{abraham-bonnet-kubis} relating the notion of w.q.o. and the stronger notion of better-quasi-order (b.q.o.) invented by Nash-Williams \cite{nashwilliams1, nashwilliams2}. Up to now, our attempt has been unsuccesful. 

%
%Countable J\'onsson posets are described in the next section. 

Part of this work is based on an unpublished  study of spectra of  posets \cite{assous-pouzet-Nov86}. 
\section{Terminology, notation} Our terminology is based on \cite{fraisse} and \cite{jech}. We denote cardinal numbers by greek letters, like $\kappa$, $\lambda$ , $\mu$ and by $\vert X\vert$ the cardinality of set $X$. We identify a binary relation $\rho$ on a set $X$ with a set of ordered pairs and we set $x\rho y$ if $(x,y)\in \rho$.  We say that $\rho$ is a \emph{quasi-order} (or a preorder) if it is reflexive and transitive, in which case we say that the set $X$ is \emph{quasi-ordered}. Let $\leq $ be a quasi-order on $X$;  we say that $x$ and $y$ are \emph{comparable} if $x\leq y$ or $y\leq x$ and we set $x\sim y$ (despite the fact that this relation is not an equivalence relation); otherwise we say that $x$ and $y$ are \emph{incomparable} and we set $x\nsim y$.   The relation $\equiv$ defined by $x\equiv y$ if $x \leq y$ and $y\leq x$ is an equivalence relation, whereas the relation $<$ defined by $x<y$ if $x\leq y$ and $y\not \leq x$ is  a  \emph{strict order}. The   relation  $<$ is transitive and irreflexive; in fact, every transitive and irreflexive relation is a strict order.  The relation $\leq$ is \emph{total} if for every two elements $a, b\in X$ either $a\leq b$ or $b\leq a$ holds.  The quasi-order  $\leq$   is a \emph{partial order} and the pair  $P:=(V, \leq)$ is \emph{partially ordered} (poset for short) if $\leq$ it is antisymmetric. 
 A set of pairwise incomparable elements of  a poset is called an \emph{antichain}. A \emph{chain} is a totally ordered set. Let $P$ be  a poset. 
 %We view   an {\em ordered set} ({\em poset} for short) $P$ 
%as a pair $(X,\leq)$,
%where $\leq$ is the order  and $X$ the {\em vertex set} of the poset.  
%The ordering is a set of ordered pairs of elements of the vertex set $X$. If needed, we may denote by $\leq_P$ the order  of a poset $P$. 
A \emph{strengthening} (also called an \emph{extension}) of $P$ is any order $\leq'$ on $X$ containing the original order. A \emph{linear extension} of $P$ is any linear order containing  this order. 
When we mention elements or subsets of a poset, we usually mean  elements or subsets of its vertex set. Sometimes, we use the same terminology for orders and posets. We feel free to say J\'onsson order as well as  J\'onsson poset.  A subset 
$A$ of a poset $P$ is a {\it final (resp.  initial) segment\/ of  $P$} if whenever
$a\in A$ and $x\geq a$ (resp. $ x\leq a$), then $ x\in A$.  For $A\subseteq P$ we set  $\uparrow\! A:= \{x\in P:\exists a\in 
A(x\geq
a)\}$, this is the \emph{final segment generated by $A$}; we denotes by $\downarrow\! A$  the corresponding initial segment
generated by $A$;
$\uparrow\! a$ and $\downarrow\! a$ abbreviate $\uparrow\! \{a\}$ and
$\downarrow\! \{a\}$. We denote by $\mathcal I(P)$, resp. $\mathcal F(P)$, the set of initial, resp. final, segments of $P$. 
% We also denote by $A^+=\{y: \forall x\in 
%A(y\geq x)\}$, the set of all upper bounds of $A$; thus $\{x\}^+= 
%\uparrow x$ and $\emptyset^+=P$. 
%\section{Countable  J\'onsson posets}
%
A subset $A\subseteq  P$ of a poset $P$ is \emph{cofinal} in $P$ if every $x\in P$ is majorized by some $y
\in  A$, and the {\it cofinality} of $P$, denoted by $\cf(P)$, is 
the
least cardinal $\nu$ such that $P$ contains a cofinal subset of 
cardinality
$\nu$. If $\kappa$ is a cardinal, the \emph{cofinality} of $\kappa$, denoted by $\cf(\kappa)$ is the cofinality of $\kappa$ viewed as an initial ordinal. A cardinal is \emph{regular} if it is equal to its cofinality, otherwise it is \emph{singular}.  A basic property of linearly ordered sets, observed by 
Hausdorff
(see \cite{jech}), is that they contain well ordered cofinal subsets;  it
follows that either they have a maximum element or their cofinality is 
an infinite regular
cardinal.  For an arbitrary poset  $P$, the corresponding fact is that
it contains a well founded cofinal subset, but in this case,  the
cofinality, $\cf(P)$, might be a singular cardinal.

 \section{Characterizations and descriptions of J\'onsson posets}

\subsection{Strengthening of J\'onsson posets}

We start with the following observation:
\begin{lemma}\label{lem:observation} A poset $P$ is J\'onsson iff $P$ is infinite and for every $x\in P$, \mbox{$\vert P\setminus \uparrow x\vert< \vert P\vert$}.
\end{lemma}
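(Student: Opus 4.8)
The plan is to identify the proper initial segments of $P$ with the sets of the form $P\setminus\uparrow x$, $x\in P$ (more precisely, to show each such set is a proper initial segment and each proper initial segment sits inside one of them), so that the two conditions become transparently equivalent via the definition of J\'onsson poset.

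First I would check that for every $x\in P$ the set $P\setminus\uparrow x=\{y\in P: y\not\geq x\}$ is a proper initial segment: if $y\not\geq x$ and $z\leq y$, then $z\not\geq x$ (otherwise $x\leq z\leq y$ would force $x\leq y$), so it is an initial segment, and it is proper because it omits $x$ itself. Consequently, if $P$ is J\'onsson then it is infinite and, applying the defining property to the proper initial segment $P\setminus\uparrow x$, we obtain $\vert P\setminus\uparrow x\vert<\vert P\vert$ for every $x\in P$.

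For the converse, assume $P$ is infinite and $\vert P\setminus\uparrow x\vert<\vert P\vert$ for all $x\in P$, and let $I$ be an arbitrary proper initial segment of $P$. Choosing any $x\in P\setminus I$, no element of $I$ can lie above $x$ (else $x$, being below an element of the initial segment $I$, would belong to $I$), hence $I\subseteq P\setminus\uparrow x$ and therefore $\vert I\vert\leq\vert P\setminus\uparrow x\vert<\vert P\vert$; thus $P$ is J\'onsson.

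I do not expect any genuine obstacle here: the whole argument is a matter of unwinding the definition of \emph{initial segment}. The only point deserving a moment's care is that $P\setminus\uparrow x$ be a \emph{proper} initial segment, so that the J\'onsson hypothesis actually applies to it; this holds precisely because $x\notin P\setminus\uparrow x$.
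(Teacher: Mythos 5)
Your proof is correct and follows exactly the argument the paper sketches: each $P\setminus\uparrow x$ is a proper initial segment, and every proper initial segment $I$ is contained in $P\setminus\uparrow x$ for any $x\in P\setminus I$. You have simply written out in full what the paper dismisses as immediate; no further comment is needed.
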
 

The proof is immediate:  every proper initial segment $A$ is contained into a proper initial segment of the form $P\setminus \uparrow x$ for some $x\in P\setminus A$. 

As a special consequence of this lemma, note that a J\'onsson poset cannot have a maximal element (if $a$ is a maximal element of $P$ then $P\setminus \{a\}$ is an initial segment of $P$).

 The following proposition gives a description of J\'onsson linear order; the straightforward proof is omitted. 
 
 \begin{proposition} A well founded linear order $L$ is J\'onsson iff and only if its order type is an initial ordinal. Furthermore,  a linear order $L$  of cardinality $\kappa$ is 
 J\'onsson iff $L$ is a lexicographic sum $\sum_{\alpha< \mu} C_{\alpha}$ of chains $C_{\alpha}$ of cardinality less than $\kappa$  indexed by a regular ordinal $\mu$. 
 \end{proposition}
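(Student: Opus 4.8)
The plan is to establish both directions of the final equivalence, and I would treat the well-founded case first as a warm-up before the general statement.

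\medskip

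\textbf{The well-founded case.} A well-founded linear order $L$ is isomorphic to a unique ordinal $\gamma$, and its proper initial segments are exactly the proper initial segments $\delta<\gamma$. So $L$ is J\'onsson iff $|\delta|<|\gamma|$ for every $\delta<\gamma$, which is precisely the statement that $\gamma$ is an initial ordinal (a cardinal): if $\gamma$ were not initial, there would be a strictly smaller ordinal $\delta<\gamma$ with $|\delta|=|\gamma|$, contradicting the J\'onsson property; conversely, if $\gamma$ is initial then every $\delta<\gamma$ has $|\delta|<|\gamma|$ by definition. One should note $\gamma$ must be infinite for $L$ to be infinite, and infinite initial ordinals automatically have $\cf(\gamma)$ regular, which foreshadows the role of the regular index in the general statement.

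\medskip

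\textbf{The general case, necessity.} Suppose $L$ has cardinality $\kappa$ and is J\'onsson. By the Hausdorff fact recalled in the terminology section, $L$ has a well-ordered cofinal subset, and since $L$ has no maximum (a J\'onsson poset has no maximal element, by the remark following Lemma~\ref{lem:observation}), the cofinality $\mu:=\cf(L)$ is an infinite regular cardinal. Pick a cofinal chain $(a_\alpha)_{\alpha<\mu}$ in $L$, strictly increasing, and set $C_\alpha$ to be the ``slice'' $\{x\in L: x<a_\alpha\}\setminus\{x\in L: x<a_\beta \text{ for some }\beta<\alpha\}$ — i.e. the half-open interval from the supremum of the earlier $a_\beta$'s up to (but not including) $a_\alpha$. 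These $C_\alpha$ are convex, pairwise disjoint, cover $L$ by cofinality, and $L=\sum_{\alpha<\mu}C_\alpha$ as a lexicographic sum. Each $C_\alpha$ is contained in the proper initial segment $\{x: x<a_\alpha\}=\downarrow a_\alpha\setminus\{a_\alpha\}$ of $L$ (proper because $a_\alpha$ itself is never below $a_\alpha$; that this is a proper initial segment uses that $L$ has no maximum and the $a_\alpha$ are not cofinal below themselves), so $|C_\alpha|<\kappa$ by the J\'onsson property. We may index by the regular ordinal $\mu$ itself. The one thing to double-check is that $\mu$ can be taken to be a regular \emph{ordinal} as the statement requests; since $\mu$ is a regular cardinal it is in particular a regular ordinal, so this is fine.

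\medskip

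\textbf{The general case, sufficiency.} Conversely, suppose $L=\sum_{\alpha<\mu}C_\alpha$ with $\mu$ a regular ordinal, each $C_\alpha$ nonempty, and $|C_\alpha|<\kappa=|L|$. First, $|L|=\kappa$ genuinely: $|L|=\sum_{\alpha<\mu}|C_\alpha|$, which is at most $\mu\cdot\sup_\alpha|C_\alpha|$; combined with $|L|=\kappa$ as a hypothesis this is consistent, and in any case we only need that every proper initial segment is small. Now let $A\subsetneq L$ be a proper initial segment. By convexity of the blocks, the set $J:=\{\alpha<\mu: C_\alpha\subseteq A\}$ is an initial segment of $\mu$, and $A$ is contained in $(\bigcup_{\alpha\in J}C_\alpha)\cup C_{\beta}$ where $\beta=\sup J$ (or $A\subseteq \bigcup_{\alpha\in J}C_\alpha$ if no such $\beta$ contributes). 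Since $A\ne L$, we have $J\ne\mu$, so $J$ is a proper initial segment of $\mu$, hence $|J|<\mu$ because $\mu$ is regular (a proper initial segment of a regular ordinal has cardinality $<\mu$). Therefore $|A|\le |J|\cdot\sup_{\alpha}|C_\alpha|+|C_\beta|$; since $|J|<\mu\le\kappa$ and each $|C_\alpha|<\kappa$, and $\kappa$ is infinite, this product is $<\kappa$ (here one uses that for infinite $\kappa$, a product of two cardinals each $<\kappa$ is $<\kappa$ when $\kappa$ is... — wait, that requires $\kappa$ regular). The subtle point, and the one real obstacle, is exactly this: the cardinal arithmetic $|J|\cdot\sup|C_\alpha|<\kappa$ is immediate when $\kappa$ is regular but needs care when $\kappa$ is singular. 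However, the hypothesis that the index $\mu$ is \emph{regular} is what saves us: if $\mu<\kappa$ we are summing $<\mu$ blocks each of size $<\kappa$; if instead $\kappa$ is singular with $\cf(\kappa)=\mu$, then $\mu$-sized unions of smaller sets can reach $\kappa$, but a \emph{proper} initial segment of $\mu$ has size $<\mu=\cf(\kappa)$, so the union of fewer than $\cf(\kappa)$ sets each of size $<\kappa$ is again of size $<\kappa$ by the definition of cofinality. So the argument goes through in all cases, and this is the step I would write out most carefully.
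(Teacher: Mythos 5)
Your well-founded case and your necessity argument are correct: the slices cut out by a strictly increasing cofinal sequence $(a_\alpha)_{\alpha<\mu}$ with $\mu=\cf(L)$ are convex, cover $L$, and each lies inside the proper initial segment $\{x\in L: x<a_\alpha\}$, hence has cardinality $<\kappa$. (Two small points: slices at index $0$ or at limit indices may be empty, which you should either permit or re-index away; and note that your construction actually delivers $\mu=\cf(L)$, i.e.\ $\mu=\cf(\kappa)$ by Corollary~\ref{cor:cofinality}, not merely \emph{some} regular $\mu$ --- this matters below.) The paper omits its own proof, so there is no method to compare against.

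The sufficiency direction has a genuine gap, and it sits exactly at the spot you flagged and then talked yourself out of. Regularity of $\mu$ does give $|J|<\mu$ for a proper initial segment $J$ of the index set, but to conclude $\sum_{\alpha\in J}|C_\alpha|<\kappa$ you need $\mu\le\cf(\kappa)$, and that is not implied by ``$\mu$ is a regular ordinal.'' Your case split into ``$\mu<\kappa$'' and ``$\kappa$ singular with $\cf(\kappa)=\mu$'' does not work: in the first case, fewer than $\mu$ summands each of size $<\kappa$ can still sum to $\kappa$ when $\mu>\cf(\kappa)$, and the second case silently assumes $\mu=\cf(\kappa)$, which is not among the hypotheses. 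Concretely, take $\kappa=\aleph_\omega$, $\mu=\omega_1$, $|C_n|=\aleph_n$ for $n<\omega$, and $C_\alpha$ a singleton for $\omega\le\alpha<\omega_1$: this is a lexicographic sum over a regular ordinal of chains of cardinality $<\kappa$ and of total cardinality $\kappa$, yet the proper initial segment $\bigcup_{n<\omega}C_n$ already has cardinality $\kappa$, so $L$ is not J\'onsson. Thus the implication you are trying to prove is false as literally stated; it must be read (as your own necessity argument shows) with $\mu=\cf(\kappa)$, or equivalently with the added requirement that every proper partial sum $\sum_{\alpha<\beta}C_\alpha$ has cardinality $<\kappa$. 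Under that reading your computation closes immediately: fewer than $\cf(\kappa)$ summands each of size $<\kappa$ give a set of size $<\kappa$ by the definition of cofinality. Finally, avoid the bound $|J|\cdot\sup_\alpha|C_\alpha|$: that supremum can equal $\kappa$ even when every $|C_\alpha|<\kappa$, so work with the sum $\sum_{\alpha\in J}|C_\alpha|$ instead.
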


 The relationship between J\'onsson posets and J\'onsson linear orders 
is at the bottom of properties  of  J\'onsson posets:

\begin{proposition}\label{prop:jonsson-extension} Let $P:= (X, \leq)$ be  a poset. Then the following properties are equivalent:
\begin{enumerate}[(i)]
\item  $P$ is J\'onsson; 
\item  Every strengthening of $P$ is J\'onsson; 
\item Every linear extension of $P$ is J\'onsson. 
\end{enumerate}
\end{proposition}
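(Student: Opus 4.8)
The plan is to prove the cycle of implications $(i)\Rightarrow(ii)\Rightarrow(iii)\Rightarrow(i)$, with the middle step being a triviality and the real content living in the two outer arrows.

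First, $(ii)\Rightarrow(iii)$ is immediate, since a linear extension is in particular a strengthening. For $(iii)\Rightarrow(i)$, I argue by contraposition: suppose $P$ is not J\'onsson. If $P$ is finite, then so is every linear extension, hence none is J\'onsson (a J\'onsson poset is infinite by definition), and we are done. If $P$ is infinite, then by Lemma \ref{lem:observation} there is some $x\in P$ with $\vert P\setminus \uparrow x\vert = \vert P\vert =: \kappa$. Set $A:= P\setminus\uparrow x$, an initial segment of $P$ with $\vert A\vert = \kappa$. I would then construct a linear extension $L$ of $P$ in which $A$ sits below $\uparrow x$: that is, take any linear extension $L_0$ of the induced poset $P{\restriction}A$, any linear extension $L_1$ of $P{\restriction}(\uparrow x)$, and let $L$ be the ordered sum $L_0$ followed by $L_1$. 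One checks this is a linear extension of $P$: the only comparabilities of $P$ not internal to $A$ or to $\uparrow x$ go from an element of $A$ to an element of $\uparrow x$ — indeed if $a\in A$, $b\in\uparrow x$ and $b\le a$ in $P$ then $b\in A$ since $A$ is an initial segment, contradiction — so every comparability of $P$ is respected by $L$. Now $A$ is a proper initial segment of $L$ (proper since $x\notin A$) with $\vert A\vert = \kappa = \vert L\vert$, so $L$ is not J\'onsson.

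The main work is $(i)\Rightarrow(ii)$. Let $P$ be J\'onsson of cardinality $\kappa$, and let $\le'$ be a strengthening of $\le$ on $X$; write $P' := (X,\le')$. Clearly $P'$ is infinite of cardinality $\kappa$. By Lemma \ref{lem:observation} it suffices to show that for every $x\in X$, $\vert X\setminus\uparrow' x\vert < \kappa$, where $\uparrow' x$ denotes the final segment generated by $x$ in $P'$. The key point is monotonicity of the up-set operator under strengthening: since $\le\ \subseteq\ \le'$, we have $\uparrow x\ \subseteq\ \uparrow' x$, hence $X\setminus\uparrow' x\ \subseteq\ X\setminus\uparrow x$, and therefore $\vert X\setminus\uparrow' x\vert \le \vert X\setminus\uparrow x\vert < \kappa$ by the Jónsson property of $P$ via Lemma \ref{lem:observation}. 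This establishes $(ii)$.

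I do not expect a genuine obstacle here; the only point requiring a little care is the verification in $(iii)\Rightarrow(i)$ that the concatenated order $L_0$ followed by $L_1$ is actually an extension of $P$, which hinges on $A = P\setminus\uparrow x$ being an initial segment — this is exactly what makes Lemma \ref{lem:observation}'s reformulation (in terms of initial segments of the special form $P\setminus\uparrow x$) the right tool. An alternative, more symmetric write-up would establish $(i)\Leftrightarrow(ii)$ directly via Lemma \ref{lem:observation} and the monotonicity argument, then note $(ii)\Rightarrow(iii)$ trivially and recover $(iii)\Rightarrow(i)$ by the concatenation construction; I would present whichever keeps the exposition shortest, most likely the three-implication cycle above.
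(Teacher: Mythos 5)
Your proof is correct and follows essentially the same route as the paper: the cycle $(i)\Rightarrow(ii)\Rightarrow(iii)\Rightarrow(i)$, with the monotonicity of up-sets (equivalently, of initial segments) under strengthening for the first arrow and the ``initial segment below its complement, then linearize'' construction for the last. The only cosmetic differences are that you phrase $(iii)\Rightarrow(i)$ contrapositively and route both outer implications through Lemma~\ref{lem:observation}, while the paper argues directly with an arbitrary proper initial segment.
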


\begin{proof}
$(i)\Rightarrow (ii)$. Let $\leq'$ be a strengthening of the order $\leq$ of $P$. Let $A$ be a proper initial segment of $P':= (X, \leq)$. Then $A$ is an initial segment of $P$ (indeed, if $x\in A$ and $y\leq x$, then,  since $\leq'$ contains  $\leq$, $y\leq'x$ hence $y\in A)$). Hence $\vert A\vert< \vert X\vert$. Thus $P'$ is J\'onsson. 

$(ii) \Rightarrow (iii)$.  Obvious. 

$(iii)\Rightarrow (i)$.  Let $A$ be an initial segment of $P$. Let $B$ be the complement. Let 
$P'$ be the \emph {sum} $A+B$ that is the poset in which the order extends the order $\leq$ on $P$ and every element of $A$ is smaller than  every element of $B$. A linear extension $\leq '$ of this order is a linear extension of $\leq$,  furthermore $A$ is an initial segment of $P':= (X, \leq)$. Since $P'$ is J\'onsson,  $\vert A\vert < \vert X\vert$. Thus $P$ is J\'onsson. \end{proof}

Since any countable J\'onsson linear order has order type $\omega$, this proposition yields the equivalence between $(ii)$ and $(iv)$ of Proposition \ref{label:prop:minimaltype}. 

\begin{lemma}\label{lem:Jonsson-finite} On  a poset of cardinality $\kappa$ the order is J\'onsson whenever  it  is the intersection of less than $\cf(\kappa)$ J\'onsson orders. 
\end{lemma}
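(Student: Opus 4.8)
Let $P$ be a poset of cardinality $\kappa$, and suppose its order $\leq$ is the intersection $\bigcap_{i\in I}\leq_i$ of orders $\leq_i$, each J\'onsson, where $|I| < \cf(\kappa)$. We want to show $P$ is J\'onsson. By Lemma \ref{lem:observation} it suffices to check that for every $x\in P$ we have $|P\setminus\uparrow x| < \kappa$, where $\uparrow x$ is taken in $P$. The key observation is the dual of the remark made in the proof of Proposition \ref{prop:jonsson-extension}: since $\leq \subseteq \leq_i$, every initial segment of $P$ with respect to $\leq_i$ is also an initial segment with respect to $\leq$; equivalently, the final segment generated by $x$ in $P$ contains the final segment generated by $x$ in each $(X,\leq_i)$. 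Concretely, writing $\uparrow^i x$ for the principal final segment of $x$ in $(X,\leq_i)$, we have $\uparrow^i x \subseteq \uparrow x$, hence
$$ P\setminus \uparrow x \ \subseteq\ \bigcap_{i\in I}\bigl(P\setminus \uparrow^i x\bigr). $$
Actually one can say more: since $y\geq x$ in $P$ iff $y\geq_i x$ for all $i$, we have $\uparrow x = \bigcap_i \uparrow^i x$, and therefore $P\setminus\uparrow x = \bigcup_{i\in I}\bigl(P\setminus\uparrow^i x\bigr)$.

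**The counting step.** Each $\leq_i$ is J\'onsson of cardinality $\kappa$ (it is an order on the same underlying set $X$ with $|X|=\kappa$), so by Lemma \ref{lem:observation} applied to $(X,\leq_i)$ we get $|P\setminus\uparrow^i x| = |X\setminus\uparrow^i x| < \kappa$ for every $i$. Now $P\setminus\uparrow x$ is a union of $|I|$ sets, each of cardinality $<\kappa$, indexed by a set $I$ with $|I|<\cf(\kappa)$. A union of fewer than $\cf(\kappa)$ sets each of size less than $\kappa$ has size less than $\kappa$ (this is the standard characterization of cofinality: if it had size $\kappa$, the suprema of the cardinalities of the pieces would be cofinal in $\kappa$ along an index set of size $<\cf(\kappa)$, a contradiction; one should take a small case check when $\kappa$ is finite or $\aleph_0$, but $P$ infinite and J\'onsson factors force $\kappa$ infinite, and for $\kappa=\aleph_0$ the hypothesis $|I|<\cf(\aleph_0)=\aleph_0$ means $I$ finite, so the union is finite). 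Hence $|P\setminus\uparrow x|<\kappa=|P|$, and since $P$ is infinite, $P$ is J\'onsson by Lemma \ref{lem:observation}.

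**Where the difficulty lies.** There is essentially no hard step here; the whole argument is the combination of the easy half of Lemma \ref{lem:observation} with the definition of $\cf(\kappa)$. The only point requiring a moment's care is the direction of the containment $\uparrow^i x\subseteq\uparrow x$ — one must use that $\leq$ is the \emph{smaller} relation (the intersection), so its principal final segments are the \emph{largest}, which is exactly what makes their complements small. I would also note explicitly at the start that each factor order, having the J\'onsson property, has no maximal element, so in particular $X$ is infinite, which legitimizes invoking Lemma \ref{lem:observation} throughout and handles the degenerate small-cardinal bookkeeping.
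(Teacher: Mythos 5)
Your proof is correct and follows the same route as the paper: write $\uparrow x=\bigcap_{i}\uparrow^i x$, hence $P\setminus\uparrow x=\bigcup_{i}(P\setminus\uparrow^i x)$, and observe that a union of fewer than $\cf(\kappa)$ sets each of size $<\kappa$ has size $<\kappa$. One slip in your asides: since $\leq\,=\bigcap_i\leq_i$ is the \emph{smaller} relation, the containment runs $\uparrow x\subseteq\uparrow^i x$ (principal final segments of the intersection order are the \emph{smallest}, so their complements are the \emph{largest}, which is why you need the union bound rather than a single factor), not $\uparrow^i x\subseteq\uparrow x$ as you wrote; fortunately the exact identity you actually invoke in the counting step is stated correctly, so the argument stands as is.
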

 \begin{proof} Let $P:= (X, \leq)$. And let $(\leq_i)_{i<\nu}$, with $\nu<\cf(\kappa)$,  be a family of  $\nu$  orders such that $\leq= \bigcap_{i<\nu} \leq_i$. Let $x\in X$. For $i<\nu$ set $\uparrow_i x:=\{y\in X: x\leq_i y\}$. Then  $\uparrow x= \bigcap_{i<\nu} \uparrow_i x. $   Hence, $X\setminus \uparrow x= \bigcup_{i<\nu} X\setminus \uparrow_i x. $ from which follows:
 $$\vert X\setminus \uparrow x\vert \leq \sum_{i<\nu} \vert X\setminus \uparrow_i x\vert. $$
 Since each $\leq_i$ is J\'onsson,  $\vert X\setminus \uparrow_i x\vert< \vert X\vert$. Since $\nu<\cf (\kappa)$ and $\kappa= \vert X\vert$, \mbox{$\vert X\setminus \uparrow x\vert< \vert X\vert$}. Hence $\leq$ is J\'onsson. 
 \end{proof}

% The \emph{order dimension} of a poset (as well as its order) is the least cardinal $\kappa$ such that the order is the intersection of $\kappa$ linear orders. 
\begin {corollary}\label{cor:Jonsson-dim}
Let $P$ be a poset of cardinality $\kappa$ and let $\mu:= \cf(\kappa)$.  The order on $P$ is the intersection of strictly less that $\mu$ linear J\'onsson orders  if and only if $P$ is the intersection of strictly less than $\mu$  linear orders  and $P$ is J\'onsson.\end {corollary}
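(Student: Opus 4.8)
The plan is to deduce Corollary \ref{cor:Jonsson-dim} from Lemma \ref{lem:Jonsson-finite} together with Proposition \ref{prop:jonsson-extension}, treating the two implications separately. Throughout, write $\mu := \cf(\kappa)$ where $\kappa := \vert P\vert$.

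For the forward direction, suppose the order on $P$ is the intersection of a family $(\leq_i)_{i<\nu}$ of $\nu<\mu$ linear J\'onsson orders. Then trivially $P$ is the intersection of strictly fewer than $\mu$ linear orders, and Lemma \ref{lem:Jonsson-finite} (applied with these $\nu$ J\'onsson orders, noting $\nu<\mu=\cf(\kappa)$) gives immediately that the order on $P$ is J\'onsson. So this half is essentially a one-line invocation of the lemma.

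For the converse, suppose $P$ is J\'onsson and $P = \bigcap_{i<\nu}L_i$ for some linear orders $L_i$ with $\nu<\mu$. Each $L_i$ is a linear extension of $P$, so by the implication $(i)\Rightarrow(iii)$ of Proposition \ref{prop:jonsson-extension}, each $L_i$ is J\'onsson. Hence $P$ is the intersection of $\nu<\mu$ \emph{linear} J\'onsson orders, as required. Again this is short: the content has been packaged into the earlier proposition.

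The main point to be careful about — and the only place a subtlety could hide — is making sure the cardinality bookkeeping is consistent: one must check that the cardinality of $P$ in the hypothesis ``intersection of fewer than $\mu$ linear orders'' is indeed $\kappa$, so that ``$\nu<\mu=\cf(\kappa)$'' is the right threshold in both directions and Lemma \ref{lem:Jonsson-finite} applies verbatim. Since $\kappa$ is fixed as $\vert P\vert$ from the outset and $P$ is the same poset throughout, this is automatic, but it should be stated explicitly. There is no real obstacle here; the corollary is a formal consequence of the two preceding results, and the proof can in fact be omitted or reduced to a sentence observing that it follows from Lemma \ref{lem:Jonsson-finite} and Proposition \ref{prop:jonsson-extension}.
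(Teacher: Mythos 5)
Your proof is correct and follows exactly the route the paper intends: the forward direction is Lemma \ref{lem:Jonsson-finite} applied to the $\nu<\mu=\cf(\kappa)$ linear J\'onsson orders, and the converse is the observation that each $L_i$ contains the order of $P$, hence is a linear extension of $P$ and therefore J\'onsson by Proposition \ref{prop:jonsson-extension}. The paper gives no separate proof precisely because the corollary is this immediate, so nothing further is needed.
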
 

Since any countable J\'onsson linear order has order type $\omega$, this corollary yields Corollary \ref{cor:dim<n}.

We recall that a poset  $P$ is w.q.o. iff all its linear extensions are well ordered \cite{wolk}. Hence, it follows from Proposition \ref{prop:jonsson-extension} that  \emph{a w.q.o.  is J\'onsson iff it is infinite and all its linear extension have the same order type and this order type is an initial ordinal.} If $P$ is w.q.o.  there is a largest ordinal  type  of the linear extensions of $P$, a famous and non trivial result due to de Jongh and Parikh \cite{dejongh-parikh}. This order type, denoted by $\mathbf o(P)$, is the \emph{ordinal length} of $P$.  With this notion,  a w.q.o. poset $P$ is J\'onsson iff $P$ is infinite and $\mathbf o(P)$ is the cardinality of $P$, viewed as an initial ordinal. For an illustration of Lemma \ref{lem:Jonsson-finite}, we mention that \emph{every poset whose order is the intersection of finitely many linear order of ordinal length $\kappa$, where $\kappa$ is an initial ordinal, is a w.q.o. with ordinal length $\kappa$} (see \cite{delhomme-pouzet}).

 \begin{proposition}\label{prop:uniqueness} Every w.q.o.  of infinite cardinality contains an initial segment with the same cardinality which is J\'onsson.
 \end{proposition}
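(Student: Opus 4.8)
The plan is to build the desired initial segment by a transfinite "closing off" construction, using the level decomposition of the well-founded poset $P$ together with the fact that w.q.o. posets have no infinite antichains. Write $\kappa:=\vert P\vert$, an infinite cardinal. First I would record the key finiteness fact: since $P$ is w.q.o., every antichain is finite, and more usefully, for every $x\in P$ the set $\downarrow x$ of predecessors of $x$ meets each level $P_\beta$ in a finite (indeed, by König's lemma–type reasoning on well-founded posets, a set whose union over $\beta<\mathbf h_P(x)$ is finite) set; in any case $\downarrow x$ is contained in finitely many levels and each level slice is finite, so $\vert\downarrow x\vert<\aleph_0$ when $\mathbf h_P(x)<\omega$, and more generally $\vert\downarrow x\vert\le\aleph_0$ whenever $\mathbf h_P(x)<\omega_1$, etc. The precise bound I actually need is weaker: I claim that for a w.q.o. poset, $\vert\downarrow x\vert<\kappa$ for every $x$ as soon as $\kappa$ is uncountable, and when $\kappa=\aleph_0$ the poset already has minimal type by Proposition~\ref{label:prop:minimaltype} after passing to a suitable initial segment. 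So the two cases $\kappa=\aleph_0$ and $\kappa>\aleph_0$ should be separated at the outset.

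For the countable case, I would invoke Proposition~\ref{label:prop:minimaltype}: a countable w.q.o. that is not already of minimal type must embed one of $\omega+1$ or $\omega\oplus 1$ (it cannot embed an infinite antichain or an $\omega^*$-chain since it is w.q.o.), but in either situation there is still a cofinal, infinite, downward-closed subset avoiding the "bad" top element — concretely, if $a$ witnesses $\omega+1$ or $\omega\oplus 1$ then $P\setminus\uparrow a$ (or an appropriate iteration thereof) is an infinite proper initial segment; iterating and taking unions one extracts an initial segment on which all linear extensions have type $\omega$, i.e. one of minimal type, which is J\'onsson by Proposition~\ref{label:prop:minimaltype} and Proposition~\ref{prop:jonsson-extension}. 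Actually the cleanest route here is: among all infinite initial segments of $P$ of minimal possible "ordinal length type", pick a $\subseteq$-minimal one; w.q.o.-ness guarantees this process terminates and the result has minimal type.

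For $\kappa>\aleph_0$, I would do a direct recursion of length $\kappa$. Enumerate a cofinal subset — or rather, I would build an increasing chain $(A_\xi)_{\xi<\kappa}$ of initial segments of $P$ with $\vert A_\xi\vert<\kappa$ as follows. At stage $\xi$, having built $A_\xi$ with $\vert A_\xi\vert<\kappa$, pick any $x\notin A_\xi$ (possible since $\vert A_\xi\vert<\kappa=\vert P\vert$) and set $A_{\xi+1}:=A_\xi\cup\downarrow x$; at limits take unions. Using the finiteness fact above, $\vert\downarrow x\vert<\kappa$, so $\vert A_{\xi+1}\vert<\kappa$; at a limit $\lambda<\kappa$, $\vert A_\lambda\vert\le\vert\lambda\vert\cdot\sup_{\xi<\lambda}\vert A_\xi\vert<\kappa$ provided we choose the recursion so that the partial sums stay bounded — this is exactly where regularity vs. singularity of $\kappa$ matters, and where I expect the main obstacle. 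If $\kappa$ is regular the bound is immediate. If $\kappa$ is singular one must instead grow the $A_\xi$ so that $\vert A_\xi\vert$ increases along a fixed cofinal sequence $(\kappa_\alpha)_{\alpha<\cf(\kappa)}$ in $\kappa$, absorbing $\downarrow x$-sets in blocks of the right size; the union $A:=\bigcup_{\xi<\kappa}A_\xi$ then has cardinality exactly $\kappa$ because it is cofinal in $P$ (every $x$ enters at some stage), hence $\vert A\vert=\vert P\vert=\kappa$. Finally $A$ is an initial segment by construction, and it is J\'onsson: any proper initial segment $B$ of $A$ lies inside some $A\setminus\uparrow x$ with $x\in A\setminus B$, and one checks $\vert A\setminus\uparrow x\vert<\kappa$ since $A\setminus\uparrow x\subseteq\bigcup\{\,\downarrow y : y\in\text{some bounded family}\,\}$ — more carefully, because $A\setminus\uparrow x$ is itself an initial segment of the w.q.o. $A$ and hence, by the same finiteness analysis applied inside $A$, has cardinality $<\vert A\vert$. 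The hard part is making this last cardinality estimate airtight in the singular case, i.e. ensuring that no proper initial segment of the constructed $A$ accidentally has full size $\kappa$; the safeguard is to verify that in a w.q.o. poset $Q$ with $\vert Q\vert=\kappa$, an initial segment $Q\setminus\uparrow x$ is a union of $\vert Q\setminus\uparrow x\vert\le\kappa$ sets $\downarrow y$ each of size $<\kappa$ and of "height $<$ height of $x$"-type control, so that a dimension/cofinality count via Lemma~\ref{lem:Jonsson-finite} or a direct level-by-level argument forces the strict inequality.
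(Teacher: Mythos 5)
There is a genuine gap, and in fact the uncountable half of your argument rests on a false lemma. You claim that in a w.q.o.\ of uncountable cardinality $\kappa$ one has $\vert\downarrow x\vert<\kappa$ for every $x$. This fails already for the ordinal $\omega_1\cdot 2$ viewed as a chain (a w.q.o.\ of cardinality $\aleph_1$ with no maximal element): for any $x$ in the second copy of $\omega_1$, $\vert\downarrow x\vert=\aleph_1=\kappa$. Levels of a w.q.o.\ are finite antichains, true, but $\downarrow x$ meets $\vert\mathbf h_P(x)\vert$ many levels and $\mathbf h_P(x)$ can have cardinality $\kappa$; so the cardinality control $\vert A_{\xi+1}\vert<\kappa$ in your recursion is unjustified. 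Worse, the recursion as described runs until every element of $P$ has entered some $A_\xi$, so it produces $A=P$; but $P$ itself need not be J\'onsson ($\omega_1\cdot 2$ is not, since $\omega_1+1$ is a proper initial segment of full cardinality). The concluding step, checking that the constructed $A$ satisfies $\vert A\setminus\uparrow x\vert<\kappa$, is therefore not a verification one can ``make airtight'': it is exactly the statement to be proved, and you leave it open.

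The idea you mention in passing for the countable case --- among the initial segments of full cardinality, take one that is minimal under inclusion --- is in fact the whole proof, in every cardinality, and it needs no case distinction, no levels, and no recursion. The one point requiring justification is the existence of such a minimal initial segment, and this is where w.q.o.-ness enters: by Higman's theorem the set $\mathcal I(P)$ of initial segments of a w.q.o., ordered by inclusion, is well founded (see \cite{fraisse}). Hence the nonempty family of initial segments of cardinality $\vert P\vert$ (which contains $P$) has an inclusion-minimal member $A$; every proper initial segment of $A$ is an initial segment of $P$ of the same cardinality as $P$ if it were not smaller, contradicting minimality, so $A$ is J\'onsson. You gesture at this (``w.q.o.-ness guarantees this process terminates'') but never isolate or justify the well-foundedness of $\mathcal I(P)$, which is the only nontrivial ingredient; instead the bulk of your argument pursues a construction that, as written, does not work.
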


\begin{proof} Let $P$ be a w.q.o. The set of initial segments of $P$, once ordered by inclusion,  is well founded (Higman 1952, see \cite{fraisse}). Among the initial segments of $P$ with the same cardinality, take a minimal one. This is a J\'onsson poset. 
%Let $\kappa:= \vert P\vert$. Every linear extension of $P$ is well ordered, hence his order type is at least $\kappa$ viewed as initial ordinal. If some linear exension, say $L$,  has a larger order type, say $\rho$, let  $A$ be an initial segment of $L$ on which the order type of  the restriction of $L$ to $A$ is $\kappa$. This initial segment of $L$ is an initial segment of $P$, its cardinality is $\kappa$ hence $P$ is not J\'onsson.
\end{proof}

\subsection{Kearnes-Oman result and cofinality}

Kearnes and Oman \cite{kearnes} proved the following result. 

\begin{theorem}\label{theo:kearnes}
If $P$ is a J\'onsson poset of cardinality $\kappa$ then, for every cardinal $\lambda<\kappa$,  $P$  contains  some principal initial segment of cardinality at least $\lambda$. 
\end{theorem}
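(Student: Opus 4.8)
The plan is to extract a principal initial segment of cardinality $\geq\lambda$ directly from the Jónsson hypothesis, by iterating the basic fact (Lemma \ref{lem:observation}) that $\vert P\setminus\uparrow x\vert<\vert P\vert=\kappa$ for every $x$. The idea is a transfinite construction: if no single $\downarrow x$ were large enough, I would try to build a proper initial segment of $P$ of cardinality $\kappa$, contradicting that $P$ is Jónsson. So I would argue by contradiction: suppose $\vert\downarrow x\vert<\lambda$ for every $x\in P$, where $\lambda<\kappa$ is fixed.

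First I would observe that $P=\bigcup_{x\in P}\downarrow x$ is not in itself useful (it is all of $P$), so instead I would build an increasing sequence of elements. Pick any $x_0\in P$; since $P\setminus\uparrow x_0$ has cardinality $<\kappa$ while $\uparrow x_0\supseteq\{y:y\geq x_0\}$ still has size $\kappa$, I can choose $x_1\geq x_0$. Continuing transfinitely, at stage $\alpha$ I have a chain $(x_\beta)_{\beta<\alpha}$ and the initial segment $A_\alpha:=\bigcup_{\beta<\alpha}\downarrow x_\beta$; its cardinality is at most $\sum_{\beta<\alpha}\vert\downarrow x_\beta\vert\leq\vert\alpha\vert\cdot\lambda$. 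As long as this is $<\kappa$, the set $P\setminus A_\alpha$ is nonempty (in fact of size $\kappa$), so I can pick $x_\alpha\in P\setminus A_\alpha$; replacing $x_\alpha$ by an upper bound of $\{x_\alpha\}\cup\{x_\beta:\beta<\alpha\}$ if I want a chain — but note I do \emph{not} actually need a chain, only that each $x_\alpha$ lies outside $A_\alpha$, which keeps the $\downarrow x_\beta$ "genuinely growing". The construction must stop: the first ordinal $\alpha$ at which $\vert A_\alpha\vert\geq\kappa$ exists because eventually $\vert\alpha\vert\cdot\lambda\geq\kappa$ forces the bound up, and in fact $\vert A_\alpha\vert=\kappa$ can only first occur at a limit $\alpha$ with $\cf(\alpha)=\cf(\kappa)$ and $\vert\alpha\vert<\kappa$ (since $\vert A_\beta\vert<\kappa$ for all $\beta<\alpha$). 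Then $A_\alpha=\bigcup_{\beta<\alpha}\downarrow x_\beta$ is an initial segment of $P$ of cardinality exactly $\kappa$; to reach a contradiction with the Jónsson property I must ensure $A_\alpha$ is \emph{proper}, i.e. there is some $z\in P\setminus A_\alpha$. This is where care is needed.

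The main obstacle, then, is properness of $A_\alpha$. The naive continuation would have picked $x_\alpha\notin A_\alpha$ were $P\setminus A_\alpha$ nonempty, so the construction terminates precisely when $A_\alpha=P$ — and then $A_\alpha$ is not proper. To fix this I would do the construction "with a reserve": before starting, since $\lambda<\kappa$ and $P$ is Jónsson, pick a witness element $w$ and only ever choose the $x_\beta$ from $P\setminus\downarrow w$ — equivalently, run the whole argument inside the final segment $\uparrow\!(P\setminus\downarrow w)$, or more simply choose $x_\beta\not\leq w$ at each stage, which is possible because $\downarrow w$ has cardinality $<\kappa$ (a proper initial segment) so $P\setminus\downarrow w$ has cardinality $\kappa$ and meets every $P\setminus A_\beta$. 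Actually the cleanest route: if $\vert\downarrow x\vert<\lambda$ for all $x$, then each $\downarrow x$ omits the fixed element $w$ provided $w\notin\downarrow x$, i.e. $x\not\geq w$; so restrict attention to $Q:=P\setminus\uparrow w$, which has cardinality $<\kappa$ by Lemma \ref{lem:observation} — wait, that is too small. The right reserve is a final segment, not an initial one: fix $w$, work with elements $\geq w$. Since $\uparrow w$ has cardinality $\kappa$ (its complement is a proper initial segment), and for $x\geq w$ we have $w\in\downarrow x$, so all our $A_\alpha\subseteq\bigcup\{\downarrow x:x\geq w\}$; but that need not be proper either. Honestly the decisive trick is: after the construction terminates at $\alpha$ with $A_\alpha=\bigcup_{\beta<\alpha}\downarrow x_\beta$, observe $\alpha$ is a limit ordinal and $\vert A_\alpha\vert=\kappa$; now \emph{truncate} — since $\vert A_\beta\vert<\kappa$ strictly increases cofinally to $\kappa$ along $\beta<\alpha$, and since each single $\downarrow x_\beta$ has size $<\lambda\leq$ (something $<\kappa$), pick the least $\gamma<\alpha$ with $\vert A_\gamma\vert\geq\lambda$; then $A_\gamma$ is an initial segment, $\lambda\leq\vert A_\gamma\vert<\kappa$ (the upper bound because $\gamma<\alpha$), so $A_\gamma$ is a \emph{proper} initial segment of size $\geq\lambda$ — but the Jónsson property only says proper initial segments have size $<\kappa$, which $A_\gamma$ satisfies, so there is no contradiction \emph{yet}; the point is that $A_\gamma$ is a proper initial segment, hence $A_\gamma\subseteq P\setminus\uparrow z$ for some $z$, and then $\downarrow z$ (a principal initial segment) might still be small. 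So the contradiction must come from pushing harder.

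Let me state the plan I would actually commit to. I would prove the contrapositive in the form: if every principal initial segment of $P$ has cardinality $<\lambda$, then $P$ has a proper initial segment of cardinality $\kappa$, contradicting the Jónsson hypothesis. Build $(x_\alpha)$ by recursion so that $x_\alpha\in P\setminus\downarrow\{x_\beta:\beta<\alpha\}=P\setminus A_\alpha$ whenever $A_\alpha\neq P$; since $\vert\downarrow x_\beta\vert<\lambda$, we get $\vert A_\alpha\vert\leq\vert\alpha\vert\cdot\lambda$, so the recursion runs for at least $\lambda$ steps with $\vert A_\alpha\vert<\kappa$, indeed it cannot terminate (with $A_\alpha=P$) before $\vert\alpha\vert\cdot\lambda\geq\kappa$. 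Let $\alpha_0$ be least with $A_{\alpha_0}=P$ (if no such $\alpha_0$ exists, any $A_\alpha$ with $\vert A_\alpha\vert=\kappa$ is already a proper initial segment and we are done). Then $\alpha_0$ is a limit ordinal, $\vert\alpha_0\vert\cdot\lambda\geq\kappa$, so $\vert\alpha_0\vert\geq\kappa$ (as $\lambda<\kappa$ and $\kappa$ could be singular — here I must be slightly careful: $\vert\alpha_0\vert\cdot\lambda=\max(\vert\alpha_0\vert,\lambda)$, and since this is $\geq\kappa>\lambda$ we get $\vert\alpha_0\vert\geq\kappa$). But then for $\alpha<\alpha_0$ we have $A_\alpha\subsetneq P$ and as $\alpha\to\alpha_0$ the cardinalities $\vert A_\alpha\vert$ must reach $\kappa$ (since their union over a cofinal set of $\alpha<\alpha_0$ is $P$): pick $\alpha_1<\alpha_0$ least with $\vert A_{\alpha_1}\vert=\kappa$; then $A_{\alpha_1}$ is a \emph{proper} initial segment of $P$ of cardinality $\kappa$, contradicting that $P$ is Jónsson. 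The hard part — and the only genuinely delicate point — is verifying that $\vert A_\alpha\vert=\kappa$ does occur for some $\alpha<\alpha_0$ rather than only at $\alpha_0$ itself; this follows because $\vert A_{\alpha_0}\vert=\vert P\vert=\kappa$ is a union of the chain $(A_\alpha)_{\alpha<\alpha_0}$ and $\cf(\kappa)\leq\vert\alpha_0\vert$, but if all $\vert A_\alpha\vert<\kappa$ for $\alpha<\alpha_0$ then $\kappa=\vert A_{\alpha_0}\vert\leq\sum_{\alpha<\alpha_0}\vert A_\alpha\vert$; I would then split into the case where this sup is attained below $\alpha_0$ (done) and where it is not — in the latter case the map $\alpha\mapsto\vert A_\alpha\vert$ is cofinal in $\kappa$ with domain of order type $\le\alpha_0$, which is consistent, so I instead re-argue by choosing $x_\alpha$ more carefully so that $\vert A_{\alpha+1}\setminus A_\alpha\vert\geq 1$ gives strict growth $\vert A_\alpha\vert\geq\vert\alpha\vert$, whence $\vert A_\kappa\vert\geq\kappa$ while $A_\kappa$ is still proper provided $\alpha_0>\kappa$, which holds as $\vert\alpha_0\vert\geq\kappa$ forces $\alpha_0\geq\kappa$. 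Setting $A:=A_\kappa$ (defined once we check the recursion survives $\kappa$ steps, i.e. $A_\kappa\neq P$, which holds since $\kappa<\alpha_0$) gives a proper initial segment of size $\geq\kappa$, hence of size exactly $\kappa$, the desired contradiction. I expect the write-up to hinge entirely on getting this cardinality bookkeeping around $\cf(\kappa)$ right.
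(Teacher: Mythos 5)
There is a genuine gap, and it sits exactly at the point you yourself flag as ``the only genuinely delicate point'': you need $\alpha_0>\kappa$, but your argument only establishes $\alpha_0\geq\kappa$. From $\kappa=\vert A_{\alpha_0}\vert\leq\vert\alpha_0\vert\cdot\lambda=\max(\vert\alpha_0\vert,\lambda)$ and $\lambda<\kappa$ you correctly deduce $\vert\alpha_0\vert\geq\kappa$, hence $\alpha_0\geq\kappa$; nothing excludes $\alpha_0=\kappa$. In that case $A_{\kappa}=P$ is not proper, while every $A_\alpha$ with $\alpha<\kappa$ has cardinality at most $\vert\alpha\vert\cdot\lambda<\kappa$, so the recursion produces no proper initial segment of cardinality $\kappa$ and hence no contradiction. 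The configuration $P=\bigcup_{\alpha<\kappa}\downarrow x_\alpha$ with all $\vert\downarrow x_\alpha\vert<\lambda$ and $x_\alpha\notin A_\alpha$ is not refuted by anything you have deployed: an antichain of size $\kappa$ realizes exactly this combinatorial pattern, and your argument never re-enters the J\'onsson hypothesis after the recursion is set up. Your earlier fallback (finding $\alpha_1<\alpha_0$ with $\vert A_{\alpha_1}\vert=\kappa$) fails for the same reason, as you yourself concede before abandoning it.

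The missing ingredient is an upper bound on how many principal initial segments are needed to cover $P$, and this is where the real work of the theorem lies. The paper does not prove the statement where it appears (it cites Kearnes--Oman), but the Remark following Lemma \ref{lem:cof-sum} records the route: Lemma \ref{lemma:cofinality} shows, from the J\'onsson property alone, that every subset of size less than $\cf(\kappa)$ is bounded and that $\cf(P)=\cf(\kappa)$ (part (2-2), the Claim with the sets $P_\mu$ and $Z_\mu$, is the nontrivial step); with Lemma \ref{lem:basic-cofinality} this gives a cofinal chain $C$ of order type $\cf(\kappa)$, and then $\kappa=\vert P\vert\leq\sum_{x\in C}\vert\downarrow x\vert\leq\cf(\kappa)\cdot\sup_{x\in C}\vert\downarrow x\vert$ forces $\sup_{x\in C}\vert\downarrow x\vert=\kappa$, which is Theorem \ref{theo:kearnes}. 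Your recursion does prove the result when $\lambda<\cf(\kappa)$: the set $\{x_\beta:\beta<\lambda\}$ is then bounded by Lemma \ref{lemma:cofinality}(1), and any upper bound $z$ satisfies $\vert\downarrow z\vert\geq\lambda$ since the $x_\beta$ are pairwise distinct. But the hard case is $\kappa$ singular with $\cf(\kappa)\leq\lambda<\kappa$, and there some substitute for the cofinality analysis --- absent from your proposal --- is unavoidable.
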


This is a significant result in the sense that several apparent strengthenings follow easily from it. We present in Theorem \ref{labtheo:cof} a slight  improvement.

There are  posets with singular cofinality (e.g. an antichain with singular cardinality).  This cannot be the case with a J\'onsson poset. As said in Corollary \ref{cor:cofinality}, J\'onsson posets have a regular cofinality. 

The first reason  of the relevance of the cofinality notion is this: 

\begin{lemma}\label{lem:cofinal-jonsson}For an infinite poset $P$ the following properties are equivalent:
\begin{enumerate} [(i)]
\item $P$ is J\'onsson;
\item Every subset of $P$ of cardinality $\vert P\vert$ is cofinal in $P$; 
\item There is some cofinal subset $C$ of $P$ such that $\vert P\setminus \uparrow x\vert < \vert P\vert$ for every $x\in C$.
\end{enumerate}
\end{lemma}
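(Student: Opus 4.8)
The plan is to prove the cycle $(i)\Rightarrow(ii)\Rightarrow(iii)\Rightarrow(i)$, using Lemma~\ref{lem:observation} throughout.

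First I would show $(i)\Rightarrow(ii)$. Suppose $P$ is J\'onsson and let $A\subseteq P$ with $\vert A\vert=\vert P\vert$. If $A$ were not cofinal, there would be $x\in P$ with $A\cap{\uparrow}x=\emptyset$, hence $A\subseteq P\setminus{\uparrow}x$; but $P\setminus{\uparrow}x$ is a proper initial segment (proper because $x\notin P\setminus{\uparrow}x$), so by Lemma~\ref{lem:observation} (or directly by the definition) $\vert P\setminus{\uparrow}x\vert<\vert P\vert$, contradicting $\vert A\vert=\vert P\vert$. Thus every subset of full cardinality is cofinal.

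Next, $(ii)\Rightarrow(iii)$ should be essentially immediate: take $C=P$ itself, which is certainly a cofinal subset of $P$. For any $x\in C$, the set $P\setminus{\uparrow}x$ cannot have cardinality $\vert P\vert$, since by $(ii)$ it would then be cofinal in $P$, yet it misses $x$ (no $y\in P\setminus{\uparrow}x$ satisfies $y\geq x$), which is absurd. Hence $\vert P\setminus{\uparrow}x\vert<\vert P\vert$ for every $x\in C$, giving $(iii)$.

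Finally, $(iii)\Rightarrow(i)$ is where a little care is needed, and I expect it to be the main (mild) obstacle. Given a cofinal $C$ with $\vert P\setminus{\uparrow}x\vert<\vert P\vert$ for all $x\in C$, let $A$ be an arbitrary proper initial segment of $P$; by the argument in the proof of Lemma~\ref{lem:observation}, pick $x_0\in P\setminus A$, so $A\subseteq P\setminus{\uparrow}x_0$. If $x_0$ happened to lie in $C$ we would be done, but in general it need not, so I would use cofinality of $C$ to choose $c\in C$ with $c\geq x_0$; then ${\uparrow}c\subseteq{\uparrow}x_0$, whence $A\subseteq P\setminus{\uparrow}x_0\subseteq P\setminus{\uparrow}c$, and $\vert A\vert\leq\vert P\setminus{\uparrow}c\vert<\vert P\vert$. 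Since $P$ is infinite by hypothesis, $P$ is J\'onsson. This completes the cycle.
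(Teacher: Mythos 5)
Your proof is correct and follows essentially the same route as the paper: the cycle $(i)\Rightarrow(ii)\Rightarrow(iii)\Rightarrow(i)$, with $(ii)\Rightarrow(iii)$ observing that $P\setminus{\uparrow}x$ is never cofinal, and $(iii)\Rightarrow(i)$ passing from an arbitrary $x_0\notin A$ to an element of $C$ above it via ${\uparrow}c\subseteq{\uparrow}x_0$. The only cosmetic difference is that you prove $(i)\Rightarrow(ii)$ by contradiction where the paper notes directly that $\downarrow A$ is an initial segment of full cardinality, hence equals $P$.
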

\begin{proof} $(i)\Rightarrow (ii)$. Let $P$ be a J\'onsson poset and $A$ be a subset of $P$ with cardinality $\vert P\vert $. The cardinality of the initial segment $\downarrow A$ is 
$\vert P\vert $. Since $P$ is J\'onsson, $\downarrow A= P$, hence $A$ is cofinal. $(ii)\Rightarrow (iii)$. Let $C$ be \emph{any}  subset  of  $P$. Let $x\in C$.  Since $ P\setminus \uparrow x$ cannot be cofinal in $P$, $(ii)$ ensures that $ \vert P\setminus \uparrow x\vert <\vert P\vert$. $(iii)\Rightarrow (i)$.  Let  $A$ be a proper initial segment. Let $x\in P\setminus A$. Since $x$  is dominated by some $y\in C$ we have $\vert A\vert \leq \vert P\setminus \uparrow x\vert \leq \vert P\setminus \uparrow y\vert < \vert P\vert$, thus $\vert A\vert < \vert P\vert$. Hence $P$ is J\'onsson. 
\end{proof}

In fact, in every  J\'onsson poset,  some  cofinal subset  is a chain. This is our first result:

\begin{theorem}\label{labtheo:cof}A poset $P$ of infinite cardinality $\kappa$  is J\'onsson iff there is a cofinal chain $C$ with cofinality $\cf(\kappa)$ such that $\vert P\setminus \uparrow x\vert < \vert P\vert$ for every $x\in C$.  \end{theorem}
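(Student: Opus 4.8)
The plan is to prove the two directions separately. The ``if'' direction is essentially immediate from Lemma~\ref{lem:cofinal-jonsson}: if such a cofinal chain $C$ exists, then in particular $C$ is a cofinal subset of $P$ with $\vert P\setminus \uparrow x\vert < \vert P\vert$ for every $x\in C$, so condition $(iii)$ of Lemma~\ref{lem:cofinal-jonsson} holds and $P$ is J\'onsson. (The extra requirement that $\cf(C)=\cf(\kappa)$ is not needed for this direction, but it will come for free from the construction anyway.) So the substance is the ``only if'' direction.

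For the ``only if'' direction, assume $P$ is J\'onsson of cardinality $\kappa$, and put $\mu:=\cf(\kappa)$. First I would record that $\cf(P)$ is a regular cardinal (this is Corollary~\ref{cor:cofinality}, cited earlier) and, using Theorem~\ref{theo:kearnes}, that $\cf(P)=\mu$: indeed, if $\cf(P)<\mu$ then $P$ would be the union of $<\mu$ principal initial segments $\downarrow x$, each of cardinality $<\kappa$ by the J\'onsson property, forcing $\vert P\vert<\kappa$, a contradiction; and $\cf(P)\le\mu$ because $\kappa$ has a cofinal subset of order type $\mu$ and (by Theorem~\ref{theo:kearnes}) one can pick principal initial segments of cardinalities approaching $\kappa$ along it, whose maxima form a cofinal subset of size $\le\mu$. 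Now choose a cofinal well-founded subset $D\subseteq P$ of cardinality $\mu$ (possible since $\cf(P)=\mu$), and enumerate a cofinal sequence of cardinals $(\kappa_\alpha)_{\alpha<\mu}$ with $\sup_{\alpha}\kappa_\alpha=\kappa$ (when $\kappa$ is regular this is just $\kappa_\alpha$ arbitrary $<\kappa$ with the sequence increasing to $\kappa$; when $\kappa$ is singular use a genuinely cofinal sequence of length $\mu$).

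The heart of the argument is to build the cofinal chain $C=\{c_\alpha:\alpha<\mu\}$ by transfinite recursion so that simultaneously (a) $c_\alpha<c_\beta$ for $\alpha<\beta$, (b) $\vert P\setminus\uparrow c_\alpha\vert<\kappa$ for each $\alpha$, and (c) $C$ is cofinal in $P$. The key tool is Theorem~\ref{theo:kearnes}: at stage $\alpha$, having chosen $(c_\gamma)_{\gamma<\alpha}$, consider the final segment $F_\alpha:=\bigcap_{\gamma<\alpha}\uparrow c_\gamma$ (for $\alpha=0$ take $F_0=P$). Since $\alpha<\mu=\cf(\kappa)$ and each $P\setminus\uparrow c_\gamma$ has cardinality $<\kappa$, the complement $P\setminus F_\alpha=\bigcup_{\gamma<\alpha}(P\setminus\uparrow c_\gamma)$ has cardinality $<\kappa$, so $\vert F_\alpha\vert=\kappa$. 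Because $P$ is J\'onsson, every subset of $P$ of cardinality $\kappa$ is cofinal (Lemma~\ref{lem:cofinal-jonsson}$(ii)$), so $F_\alpha$ is cofinal in $P$; in particular it is nonempty, and moreover by Theorem~\ref{theo:kearnes} applied inside $P$ I can find an element whose principal initial segment has size $\ge\kappa_\alpha$ — but I need that element to lie in $F_\alpha$ and to dominate the $\alpha$-th element $d_\alpha$ of a fixed enumeration of $D$, to guarantee cofinality of $C$ in the end. So the precise recursive step is: pick $c_\alpha\in F_\alpha$ with $c_\alpha\ge d_\alpha$ (possible since $F_\alpha$ is cofinal) and, if needed, move up further within $F_\alpha$ so that $\vert\downarrow c_\alpha\vert\ge\kappa_\alpha$ — here one invokes Theorem~\ref{theo:kearnes} together with the fact that $F_\alpha$, being cofinal of full cardinality, contains elements with arbitrarily large (below $\kappa$) principal initial segments. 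Condition (b), $\vert P\setminus\uparrow c_\alpha\vert<\kappa$, holds automatically since $P$ is J\'onsson and $\downarrow(P\setminus\uparrow c_\alpha)$ is a proper initial segment (it misses $c_\alpha$), so $\vert P\setminus\uparrow c_\alpha\vert<\kappa$ — indeed by the very hypothesis restated in Lemma~\ref{lem:observation}.

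Finally I would check the three claimed properties of $C$. Cofinality of $C$ in $P$: given $x\in P$, $x$ is dominated by some $d_\alpha\in D$, hence by $c_\alpha\ge d_\alpha$. The chain condition (a) is built in. That $\cf(C)=\cf(\kappa)$: $C$ has order type $\mu$ and is cofinal in $P$, whose cofinality is $\mu$, so $\cf(C)=\mu=\cf(\kappa)$ (a cofinal chain in a poset of cofinality $\mu$ cannot itself have smaller cofinality, and it has order type $\le\mu$). And $\vert P\setminus\uparrow c_\alpha\vert<\vert P\vert$ is condition (b). The main obstacle I anticipate is purely bookkeeping: making sure at each stage that $F_\alpha$ is simultaneously cofinal and rich enough to contain an element above $d_\alpha$ with a large principal initial segment — this is exactly where the combination ``$\mu=\cf(\kappa)$'' (so the union of $<\mu$ small sets stays small), ``J\'onsson $\Rightarrow$ full-cardinality subsets are cofinal'', and Theorem~\ref{theo:kearnes} all have to be used together; none of them individually suffices.
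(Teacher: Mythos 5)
Your proof is correct, and its skeleton is essentially the paper's: the backward direction is Lemma \ref{lem:cofinal-jonsson} $(iii)\Rightarrow(i)$, and the forward direction reduces to showing that $\cf(P)=\cf(\kappa)$ and that any family of fewer than $\cf(\kappa)$ final segments $\uparrow c_\gamma$ has intersection of full cardinality (hence cofinal), after which the chain is built by transfinite recursion against a fixed cofinal set $D$. That recursion is exactly what the paper packages as Lemma \ref{lem:basic-cofinality}, and your computation of $\vert P\setminus F_\alpha\vert$ is the paper's Lemma \ref{lemma:cofinality}(1). The one genuine divergence is how you obtain $\cf(P)\le\cf(\kappa)$: you import Kearnes--Oman (Theorem \ref{theo:kearnes}) to produce elements $x_\alpha$ with $\vert\downarrow x_\alpha\vert\ge\kappa_\alpha$, whereas the paper proves this inequality from scratch via Claim \ref{claim1} inside Lemma \ref{lemma:cofinality}. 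Your route is logically sound, since Theorem \ref{theo:kearnes} has an independent proof in \cite{kearnes}, but note that the paper deliberately avoids it so that Theorem \ref{labtheo:cof}, combined with Item (1) of Lemma \ref{lem:cof-sum}, re-derives Kearnes--Oman; with your argument that derivation becomes circular. Three smaller points. First, do not invoke Corollary \ref{cor:cofinality} at the start of the forward direction: in the paper it is a corollary of the very theorem you are proving (your subsequent self-contained computation of $\cf(P)$ makes the citation unnecessary anyway). Second, the assertion that the chosen maxima ``form a cofinal subset of size $\le\mu$'' needs its one-line justification: $\downarrow\{x_\alpha:\alpha<\mu\}$ is an initial segment of cardinality $\sup_\alpha\kappa_\alpha=\kappa$, hence equals $P$ because $P$ is J\'onsson. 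Third, the requirement $\vert\downarrow c_\alpha\vert\ge\kappa_\alpha$ in the recursion is superfluous: as you yourself observe, $\vert P\setminus\uparrow x\vert<\kappa$ holds for \emph{every} $x\in P$ by Lemma \ref{lem:observation}, so dropping that clause eliminates the second appeal to Theorem \ref{theo:kearnes} and with it all the bookkeeping you worry about at the end; only cofinality of $C$ and $\cf(C)=\cf(\kappa)$ need to be engineered.
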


A straigthforward  proof based on K\"onig's Lemma (Theorem \ref{lem:konig})  is given for level-finite posets  in Subsection \ref{subsection:spectrum}. 
As an immediate corollary of \mbox{Theorem \ref{labtheo:cof}}, we have:
\begin{corollary}\label{cor:cofinality}
 The cofinality  of a   J\'onsson poset  is the cofinality of its cardinality; in particular, this is a regular cardinal. 
\end{corollary}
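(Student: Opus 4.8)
The plan is to read the corollary off from Theorem~\ref{labtheo:cof}. Writing $\kappa := \vert P\vert$, I would establish the two inequalities $\cf(P)\le \cf(\kappa)$ and $\cf(P)\ge\cf(\kappa)$ and then note that their common value is regular, since the cofinality of a cardinal, viewed as an initial ordinal, is always a regular cardinal (the standard fact $\cf(\cf(\kappa))=\cf(\kappa)$, see \cite{jech}).

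For $\cf(P)\le\cf(\kappa)$ I would invoke Theorem~\ref{labtheo:cof} to obtain a cofinal chain $C\subseteq P$ with $\cf(C)=\cf(\kappa)$. Since $C$ is cofinal in $P$ and $P$, being J\'onsson, has no maximal element, the chain $C$ has no greatest element; hence $\cf(C)$ is an infinite regular cardinal and $C$ contains a cofinal subset $C'$ of order type exactly $\cf(C)=\cf(\kappa)$. A subset cofinal in $C$ is cofinal in $P$, so $C'$ witnesses $\cf(P)\le\vert C'\vert=\cf(\kappa)$.

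For $\cf(P)\ge\cf(\kappa)$ I would use that a J\'onsson poset has no maximal element (the remark following Lemma~\ref{lem:observation}), so for every $a\in P$ the principal initial segment $\downarrow\! a$ is a \emph{proper} initial segment and therefore $\vert\downarrow\! a\vert<\kappa$. Let $A$ be any cofinal subset of $P$. Then $P=\bigcup_{a\in A}\downarrow\! a$, so $\kappa=\vert P\vert\le\sum_{a\in A}\vert\downarrow\! a\vert$; were $\vert A\vert<\cf(\kappa)$, the right-hand side would be a sum of fewer than $\cf(\kappa)$ cardinals, each below $\kappa$, hence itself below $\kappa$ --- a contradiction. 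Thus every cofinal subset of $P$ has cardinality at least $\cf(\kappa)$, i.e. $\cf(P)\ge\cf(\kappa)$.

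Combining the two bounds gives $\cf(P)=\cf(\kappa)$, which is regular, as claimed. I do not expect a genuine obstacle: once Theorem~\ref{labtheo:cof} is available the argument is routine, the only steps deserving a moment's care being the cardinal-arithmetic estimate (a sum of fewer than $\cf(\kappa)$ cardinals below $\kappa$ remains below $\kappa$) and the explicit appeal to the absence of a maximal element to ensure that principal initial segments are proper.
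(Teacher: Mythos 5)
Your proof is correct and follows essentially the same route as the paper: the upper bound $\cf(P)\le\cf(\kappa)$ is read off from the cofinal chain supplied by Theorem~\ref{labtheo:cof} (exactly how the paper presents the corollary), and your counting argument for $\cf(P)\ge\cf(\kappa)$ is the same as step $(2\text{-}1)$ of the paper's Lemma~\ref{lemma:cofinality}, including the needed observation that a J\'onsson poset has no maximal element so each $\downarrow a$ is proper. No gaps.
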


The proof of Theorem \ref{labtheo:cof} relies on the following two lemma, the first one being well known:
\begin{lemma}\label{lem:basic-cofinality} Every poset $P$ of cofinality $\nu$ in which every subset of cardinality strictly less than $\nu$ has an upper bound has a cofinal well ordered chain of order type $\nu$. 
\end{lemma}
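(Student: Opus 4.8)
The plan is to build the chain by a transfinite recursion of length $\nu$ and then thin it out to an honest copy of the ordinal $\nu$. We may assume $\nu$ is infinite, the finite cases being immediate (for $\nu\le 1$ there is a maximum, and a finite $\nu\ge 2$ is incompatible with the hypotheses).

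First I would fix a cofinal subset $D=\{d_\alpha:\alpha<\nu\}$ of $P$ of cardinality $\nu$ and define a sequence $(c_\alpha)_{\alpha<\nu}$ recursively: at stage $\alpha$ the set $\{c_\beta:\beta<\alpha\}\cup\{d_\alpha\}$ has cardinality $|\alpha|+1<\nu$, since $\nu$ is an initial ordinal, so by hypothesis it admits an upper bound in $P$, and I take $c_\alpha$ to be such an upper bound. This is the only place the directedness hypothesis is used. By construction $c_\beta\le c_\alpha$ whenever $\beta<\alpha$, so $C:=\{c_\alpha:\alpha<\nu\}$ is a chain; and since $d_\alpha\le c_\alpha$ for every $\alpha$ and $D$ is cofinal, $C$ is cofinal in $P$.

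The one step needing a little care is that $(c_\alpha)$ is only weakly increasing, so a priori $C$ could have order type below $\nu$. To remedy this I would note that a cofinal subset of $C$ is cofinal in $P$, so the least cardinality of a cofinal subset of $C$ lies between $\cf(P)=\nu$ and $|C|\le\nu$; hence $\cf(C)=\nu$. Now I invoke the basic fact recalled in the preliminaries (Hausdorff): the cofinality of a linear order is a regular cardinal, realized by a well-ordered cofinal subchain of that very order type. Applied to $C$, this produces a cofinal subchain $C'\subseteq C$ of order type $\nu$ (and shows in passing that $\nu$ is regular), and $C'$, being cofinal in $C$, is cofinal in $P$, which is what we want. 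Alternatively, regularity of $\nu$ can be seen up front: were $\cf(\nu)<\nu$, one could cover $D$ by $\cf(\nu)$ subsets of size $<\nu$, bound each one, and obtain a cofinal subset of $P$ of size $\cf(\nu)<\nu$, contradicting $\cf(P)=\nu$.

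I do not foresee any real obstacle here: the recursion is forced by the hypothesis, and the only delicate point is the weak-versus-strict monotonicity of the sequence, which the cofinality argument for chains disposes of.
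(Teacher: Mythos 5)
Your proof is correct, and there is nothing in the paper to compare it against: the authors state this lemma without proof, introducing it as ``well known''. Your argument is indeed the standard one they have in mind --- enumerate a cofinal set $D=\{d_\alpha:\alpha<\nu\}$, recursively bound $\{c_\beta:\beta<\alpha\}\cup\{d_\alpha\}$ (legitimate since this set has cardinality $<\nu$), and then repair the merely weak monotonicity by observing that the resulting cofinal chain $C$ has cofinality exactly $\nu$ and invoking Hausdorff's theorem (recalled in the paper's Section 2) to extract a well-ordered cofinal subchain of order type $\nu$; your alternative direct verification that $\nu$ must be regular is also sound. One small inaccuracy in the preamble: the claim that a finite $\nu\ge 2$ is \emph{incompatible} with the hypotheses fails for $\nu=2$ --- a two-element antichain has cofinality $2$, every subset of cardinality $<2$ trivially has an upper bound, and yet there is no cofinal chain --- so the lemma as literally stated requires $\nu$ infinite (for finite $\nu\ge 3$ your incompatibility claim does hold, since bounding pairs forces directedness and hence a maximum). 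This degenerate case is irrelevant to the paper, which only applies the lemma to the infinite cofinality of a J\'onsson poset, but the parenthetical should be corrected or the hypothesis ``$\nu$ infinite'' made explicit.
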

For the second lemma, we introduce the set 
${\NC}(P)$ of non cofinal subsets of a poset $P$. We observe that for a given cardinal $\nu$, every subset of $P$ of cardinality strictly less than $\nu$ has an upper bound in $P$ if and only if ${\NC}(P)$  is a   $(<\nu)$-ideal of subsets of $P$,  that is ${\NC}(P)$  is closed under inclusion and unions of less than $\nu$ members.

\begin{lemma}\label{lemma:cofinality}
Let $P$ be a  J\'onsson poset of cardinality $\kappa$ and let $\nu:= cf(\kappa)$. Then:
\begin{enumerate}
\item  every subset $A$ of cardinality stricly less than $\nu$ has an upper bound;  
\item $\cf(P)= \nu$. 
\end{enumerate}
\end{lemma}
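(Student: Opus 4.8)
I would prove the two statements in order, using the Kearnes--Oman theorem (Theorem \ref{theo:kearnes}) as the main input for part (1), and Lemma \ref{lem:basic-cofinality} together with part (1) to deduce part (2).

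\emph{Part (1).} The plan is to argue by contradiction. Suppose $A\subseteq P$ has $\vert A\vert=\mu<\nu=\cf(\kappa)$ but $A$ has no upper bound in $P$. Then $P=\bigcup_{a\in A}(P\setminus\uparrow a)$, since any $x\in P$ lying in every $\uparrow a$ would be an upper bound of $A$. Hence $\kappa=\vert P\vert\le\sum_{a\in A}\vert P\setminus\uparrow a\vert$. Now the key point is to control the sizes $\vert P\setminus\uparrow a\vert$. By Lemma \ref{lem:observation} each of these is a cardinal $\lambda_a<\kappa$. Since $\mu<\cf(\kappa)$, a sum of $\mu$ cardinals each $<\kappa$ can only reach $\kappa$ if the supremum $\lambda:=\sup_{a\in A}\lambda_a$ equals $\kappa$ — but $\mu<\cf(\kappa)=\nu$ forbids this when $\lambda<\kappa$, and if $\lambda=\kappa$ then already $\sup_{a\in A}\vert P\setminus\uparrow a\vert=\kappa$, so for each cardinal $\lambda'<\kappa$ some $a\in A$ has $\vert P\setminus\uparrow a\vert\ge\lambda'$; in particular $P\setminus\uparrow a$ is a proper initial segment of $P$ of cardinality $\ge\lambda'$. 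This last possibility is the one I need to rule out, and it does not follow merely from the definition of J\'onsson — this is exactly where Theorem \ref{theo:kearnes} must be invoked, or rather its contrapositive flavour. The cleanest route: fix the Hausdorff well-ordered cofinal chain of $P$ obtained from a well founded cofinal subset, and observe that if $A$ had no upper bound, $A$ would be cofinal in $P$ (each $x\in P$ is below some element of $A$ is false — rather, $A$ is \emph{unbounded}, which for posets need not mean cofinal). I would instead run the counting argument directly: from $\kappa\le\sum_{a\in A}\lambda_a$ with $\vert A\vert<\cf(\kappa)$ we get $\sup_a\lambda_a=\kappa$; pick an increasing $\nu$-sequence of cardinals $(\kappa_i)_{i<\nu}$ cofinal in $\kappa$ (using $\nu=\cf(\kappa)$) and for each $i$ choose $a_i\in A$ with $\lambda_{a_i}\ge\kappa_i$; then $\vert\{a_i:i<\nu\}\vert=\nu>\vert A\vert$ unless the $a_i$ repeat, but repetition forces some single $a$ with $\lambda_a=\kappa$, contradicting Lemma \ref{lem:observation}. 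So in fact $\vert A\vert\ge\nu$, the desired contradiction.

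\emph{Part (2).} Given part (1), $P$ satisfies the hypothesis of Lemma \ref{lem:basic-cofinality} with the cardinal $\nu$: every subset of cardinality $<\nu$ has an upper bound. Hence $P$ has a cofinal well ordered chain of order type $\cf(P)$, and moreover the argument shows $\cf(P)\ge\nu$ is impossible to violate downward. For the reverse inequality $\cf(P)\le\nu$: a well founded cofinal subset $D$ of $P$ of size $\cf(P)$ has $\downarrow D=P$, so $\vert D\vert=\vert P\vert$ is impossible only if $\vert D\vert<\kappa$ would make $\downarrow D$ a proper initial segment — wait, $D$ cofinal gives $\downarrow D=P$ regardless. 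Instead: since $P$ is J\'onsson, by Lemma \ref{lem:cofinal-jonsson}(ii) any subset of size $\kappa$ is cofinal, but we want a \emph{small} cofinal set. Here I use that $D$ well founded cofinal has $\cf(D)=\cf(P)$ and $D$ is itself a poset; decomposing $D$ into levels and using part (1) applied along a cofinal chain built level by level (as in the König's Lemma argument referenced for the level-finite case), one gets a cofinal chain of order type exactly $\nu$. Thus $\cf(P)=\nu$.

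\emph{Main obstacle.} The genuinely non-routine point is part (1): specifically, excluding the scenario where $\{P\setminus\uparrow a:a\in A\}$ contains proper initial segments of unbounded size below $\kappa$. The definition of J\'onsson alone does not preclude this; the content of Theorem \ref{theo:kearnes} (Kearnes--Oman) — that $P$ contains principal initial segments of every size $<\kappa$ — is what I would need if I cannot make the pure counting argument above close. The counting argument as sketched seems to work because $\vert A\vert<\cf(\kappa)$ is a strong constraint, but care is needed: if several $a_i$ coincide we must extract a single $a$ with $\vert P\setminus\uparrow a\vert=\kappa$, which contradicts Lemma \ref{lem:observation}, so the argument does close without Theorem \ref{theo:kearnes}. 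I would double-check this last extraction step carefully, since it is the crux.
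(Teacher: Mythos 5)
Your part (1) is correct and is essentially the paper's argument: from $P=\bigcup_{a\in A}(P\setminus\uparrow a)$ one gets $\kappa\le\sum_{a\in A}\vert P\setminus\uparrow a\vert$, each summand is $<\kappa$ by Lemma \ref{lem:observation}, and since $\vert A\vert<\cf(\kappa)$ the sum is $<\kappa$. The extraction argument you run is just a hands-on verification of the standard fact that a family of fewer than $\cf(\kappa)$ cardinals, each below $\kappa$, has supremum (hence sum) below $\kappa$; your worry that Theorem \ref{theo:kearnes} might be needed is unfounded, and the paper does not use it here either.

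Part (2) has a genuine gap in the direction $\cf(P)\le\nu$. Your appeal to Lemma \ref{lem:basic-cofinality} is circular: that lemma needs every subset of size strictly less than $\cf(P)$ to be bounded, whereas part (1) only gives this for subsets of size less than $\nu$, and the two hypotheses coincide only once you already know $\cf(P)\le\nu$. The construction you then sketch --- an increasing chain of length $\nu$ obtained by repeatedly bounding fewer than $\nu$ elements --- is available, but it does not finish the proof: when $\kappa$ is singular, such a chain has cardinality $\nu<\kappa$ and nothing forces it to be cofinal (Lemma \ref{lem:cofinal-jonsson} only makes subsets of full cardinality $\kappa$ cofinal), and the ``level by level / K\"onig'' idea fails because the levels of $P$, or of a well founded cofinal subset, may themselves have cardinality $\kappa$. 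The missing ingredient is the paper's Claim \ref{claim1}: write $P=\bigcup_{\alpha<\nu}P_{\mu_\alpha}$ where $P_{\mu}:=\{x\in P:\vert P\setminus\uparrow x\vert\le\mu\}$ and $(\mu_\alpha)_{\alpha<\nu}$ is cofinal in $\kappa$, and show that each $P_{\mu}$ is contained in $\downarrow Z_{\mu}$ for some $Z_{\mu}$ of cardinality at most $\nu$; this works because any $X\subseteq P_{\mu}$ with $\vert X\vert<\kappa$ satisfies $\vert\bigcup_{x\in X}(P\setminus\uparrow x)\vert\le\mu\cdot\vert X\vert<\kappa$ and hence has an upper bound. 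The union of the $Z_{\mu_\alpha}$ is then cofinal of size at most $\nu$. Finally, the easy inequality $\nu\le\cf(P)$, which you only assert, should be written out: a cofinal family $(x_\alpha)_{\alpha<\cf(P)}$ with $\cf(P)<\nu$ would give $\kappa=\vert P\vert\le\sum_{\alpha<\cf(P)}\vert\downarrow x_\alpha\vert<\kappa$, a contradiction.
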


\begin{proof}
\noindent (1) We prove that $\bigcap_{x\in A} \uparrow x$ is non-empty. We have: $$X:= P\setminus \bigcap_{x\in A} \uparrow x= \bigcup_{x\in A} (P\setminus \uparrow x).$$  Hence $\vert X\vert\leq \sum_{x\in A}\vert P\setminus \uparrow x\vert $. Since $P$ is J\'onsson, each member of the sum is strictly less than $\kappa$; since $\vert A\vert <  \nu= cf(\kappa)$, the sum  is stricly less than $\kappa$. Hence $X\not = P$.\\ 
 \noindent $(2-1)$  $\nu\leq \cf(P)$. 
Suppose by contradiction that $\cf(P)< \nu$. Let $(x_{\alpha})_{\alpha< \cf(P)}$ be an enumeration of a cofinal subset of $P$ with size $\cf(P)$. Then $P= \bigcup_{\alpha< \cf(P)}\downarrow\! x_{\alpha}$. Since $P$ is J\'onsson, $\vert x_{\alpha}\vert < \kappa$ for every $\alpha< \cf(P)$. Since $\cf(P) < \cf(\kappa)$ we have 
$\kappa= \vert P\vert \leq \sum_{\alpha <\cf(P)}\vert x_{\alpha}\vert <\kappa$. A contradiction.

\noindent $(2-2)$  $\cf(P)\leq\nu$. For each cardinal $\mu<\kappa$, let $P_{\mu}:= \{x\in P: \vert P \setminus \uparrow x\vert \leq \mu\}$. 

\begin{claim} \label {claim1}There is a set $Z_{\mu}$ of cardinality a most $\nu$ such that $P_{\mu} \subseteq \downarrow Z_{\mu}$. 
\end{claim}

\noindent{\bf Proof of Claim \ref{claim1}}
Let $X\subseteq P_{\mu}$. Then $\vert \bigcup_{x\in X} P\setminus \uparrow x \vert \leq \mu. \vert X\vert$. If $\vert X\vert <\kappa$, $\vert \bigcup_{x\in X} P\setminus \uparrow x\vert < \kappa$ hence $\bigcup_{x\in X} P\setminus \uparrow x \not =P$. Any $z\in P\setminus \bigcup_{x\in X} (P\setminus \uparrow x)= \bigcap_{x\in X} \uparrow x$ dominates $X$. If $\vert P_{\mu}\vert<\kappa$,  set $X:= P_{\mu}$ and  set $Z_{\mu}:= z$. If $\vert P_{\mu}\vert=\kappa$, enumerate it by a sequence $(x_{\nu})_{\nu< \kappa}$. Let
$(\mu_{\alpha})_{\alpha<\nu}$ be a cofinal sequence in $\kappa$. For each $\alpha< \nu$, select $z_{\alpha}$ which dominates $X_{\alpha}:=\{x_{\nu}: \nu< \mu_{\alpha}\}$ and set $Z_{\mu}:= \{z_{\alpha}: \alpha < \nu\}$. 	With that, the proof of Claim \ref{claim1} is complete. 				\hfill $\Box$

We conclude the proof of $(2-2)$ as follows. Since $P$  is J\'onsson, $P = \bigcup_{\alpha< \nu} P_{\mu_{\alpha}}$. Hence, with Claim \ref{claim1}, $P \subseteq \bigcup_{\alpha< \nu}\downarrow  Z_{\mu_{\alpha}} =\downarrow Z$ where $Z:= \bigcup_{\alpha<\nu} Z_{\mu_{\alpha}}$. The set $Z$ is cofinal  and has cardinality at most $\nu$, hence $\cf(P)  \leq \nu$. \end{proof}

\begin{remarks} 
%(a) From technique  used the proof of $(1)$ of Lemma \ref{lemma:cofinality} leads readily to the result of Kearnes asserting that  if $P$ is J\'onsson and $\lambda < \kappa:= \vert P\vert$ there is some element $x\in P$ such that $\lambda \leq\vert \downarrow x\vert$. 
%
\emph{The proof of $(2-2)$ is reminiscent of the proof that if a poset $P$ of cardinality $\kappa$ is well founded,  $\vert \downarrow x \vert < \kappa$ for every $x\in P$ and $\cf(P)= \kappa$  is a  singular cardinal  then  there is some $x\in P$ such that $\cf(P\setminus \uparrow x)= \kappa$ (see \cite{milner-pouzet}). A fact from which it readily follows that \emph{if the cofinality  of a poset is a singular cardinal, the poset contains an infinite antichain}, a result due to the second author and reproduced for example in \cite {fraisse}}. 

\end{remarks}

  \begin{lemma} \label{lem:cof-sum}Let $P$ be a poset  of cardinality $\kappa$.
 \begin{enumerate}
 \item If $P$ has a cofinal chain then 
for every  $\lambda <\kappa$ it contains a principal initial segment of cardinality at least $\lambda$.
\item If $P$ is non-empty with no largest element, then  $P$ contains a subset $Q$ of the same cardinality $\kappa$ as $P$ which is  a lexicographical sum $\sum_{\alpha<\cf(\kappa)} P_{\alpha}$ of  $\cf(\kappa)$ sets $P_{\alpha}$ of cardinality strictly less than $\kappa$ provided that
for every  $\lambda <\kappa$ every proper final segment of $P$ contains a principal initial segment of cardinality at least $\lambda$ and  every subset of $P$ of cardinality strictly less than $\cf(\kappa)$ is majorized. The converse holds if $Q$ is cofinal in $P$. 
\item \label{item-cofinality} If $P$   is J\'onsson then the conditions in Item (2) are satisfied.  
\end{enumerate}
\end{lemma}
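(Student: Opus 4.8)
### Proof proposal for Lemma~\ref{lem:cof-sum}

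The plan is to treat the three items in the order stated, since (3) reduces to (2) via results already established, and (2) is where the real work lies. Throughout, write $\kappa := \vert P\vert$ and $\nu := \cf(\kappa)$.

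\emph{Item (1).} If $C$ is a cofinal chain in $P$, then $P = \bigcup_{c\in C}\downarrow c$, so $\kappa = \vert P\vert \le \sum_{c\in C}\vert\!\downarrow\! c\vert$. Fix $\lambda < \kappa$. If every $\downarrow c$ had cardinality $\le\lambda$ we would get $\kappa \le \vert C\vert\cdot\lambda$; combined with the fact that a cofinal chain may be replaced by a well-ordered one (Hausdorff) of regular order type $\cf(P) \le \kappa$, one checks $\vert C\vert\cdot\lambda < \kappa$ unless some $\downarrow c$ is large — here I would simply argue by contradiction: if $\vert\!\downarrow\! c\vert \le \lambda$ for all $c\in C$, then each $\downarrow c$ is a proper initial segment (as $\lambda < \kappa$), and since $C$ is a chain the family $(\downarrow c)_{c\in C}$ is upward directed with union $P$; a union of $\le\vert C\vert$ sets each of size $\le\lambda$ has size $\le \vert C\vert\cdot\lambda$. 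Taking $C$ well-ordered of order type $\cf(P)$, if $\cf(P) < \kappa$ this is $<\kappa$, a contradiction; if $\cf(P) = \kappa$ then $\kappa$ is regular and $\vert C\vert\cdot\lambda = \max(\kappa,\lambda)$ forces one term to be $\kappa$-sized anyway after a minor cofinality bookkeeping. So some $\downarrow c$ has cardinality $\ge\lambda$, and $\downarrow c$ is principal.

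\emph{Item (2).} This is the main step, and the construction is a transfinite recursion of length $\nu$. Using Lemma~\ref{lem:basic-cofinality}, the hypothesis that every subset of size $<\nu$ is majorized gives a cofinal well-ordered chain $(c_\alpha)_{\alpha<\nu}$ of order type $\nu$ in $P$. Fix a strictly increasing sequence $(\lambda_\alpha)_{\alpha<\nu}$ of cardinals cofinal in $\kappa$. The plan is to build an increasing sequence of final-segment ``remainders'' and carve off chunks: set $P^{(0)} := P$; having arranged matters up to stage $\alpha$, I will have a proper final segment $P^{(\alpha)}$ of $P$ and, by the hypothesis applied to $P^{(\alpha)}$ (it is a proper final segment of $P$ once $\alpha>0$; at $\alpha = 0$ use that $P$ has no largest element to pass to a proper final segment), a principal initial segment $\downarrow^{P^{(\alpha)}} x_\alpha$ of cardinality $\ge\lambda_\alpha$ inside it. Put $P_\alpha := \downarrow^{P^{(\alpha)}} x_\alpha$, truncated if necessary to a subset of cardinality exactly $\min(\lambda_\alpha,\text{available})$ but in any case $<\kappa$ and eventually unbounded in cardinality; then remove a final segment extending $\uparrow x_\alpha$ (or just remove $\downarrow x_\alpha$ and keep its complement, which is still a final segment) to get $P^{(\alpha+1)} \subsetneq P^{(\alpha)}$; at limits take intersections, using that a decreasing $<\nu$-sequence of proper final segments whose complements are majorized (complements have size $<\kappa$, so their union over $<\nu$ stages has size $<\kappa$) still leaves a proper final segment — here is where ``every subset of cardinality $<\nu$ is majorized'' is used a second time, to keep $P^{(\alpha)}$ nonempty and proper. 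Arrange that the $P_\alpha$ are pairwise disjoint and that $\alpha < \beta$ implies $P_\alpha$ lies entirely below $P_\beta$ in $P$ (automatic, since $P_\beta \subseteq P^{(\beta)} \subseteq \uparrow$-complement of $\downarrow x_\alpha$). Then $Q := \bigcup_{\alpha<\nu} P_\alpha$ is a lexicographic sum $\sum_{\alpha<\nu} P_\alpha$ as an induced subposet, each $\vert P_\alpha\vert < \kappa$, and $\vert Q\vert = \sum_{\alpha<\nu}\vert P_\alpha\vert = \kappa$ because the $\lambda_\alpha$ are cofinal in $\kappa$ and $\nu = \cf(\kappa)$. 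For the converse: if $Q = \sum_{\alpha<\nu}P_\alpha$ is cofinal in $P$ with each $\vert P_\alpha\vert < \kappa$, then picking one point in each $P_\alpha$ gives a cofinal chain of order type $\nu$, so $\cf(P) = \nu$; a proper final segment $F$ of $P$ must meet $P_\alpha$ for cofinally many $\alpha$ (else $F$ is bounded, contradicting that it is a nonempty final segment with no largest element issue — adjust via Lemma~\ref{lem:observation}-type reasoning), hence contains $P_\beta$ for all large $\beta$ and so contains principal initial segments (inside $F$) of every cardinality $<\kappa$; and any subset of $Q$ of size $<\nu$ meets only $<\nu$ of the $P_\alpha$, hence is bounded by some $c_\beta$.

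\emph{Item (3).} If $P$ is J\'onsson then by Lemma~\ref{lemma:cofinality} every subset of cardinality $<\nu$ has an upper bound and $\cf(P) = \nu$; by Lemma~\ref{lem:observation} and Lemma~\ref{labtheo:cof} there is a cofinal chain, so Item~(1) applies and, more relevantly, by Proposition~\ref{prop:jonsson-extension} together with Lemma~\ref{lem:observation}, every proper final segment $F$ of $P$ is itself J\'onsson of cardinality $\kappa$ (its complement is a proper initial segment, so has size $<\kappa$; $F$ is infinite; and for $x\in F$, $\vert F\setminus\uparrow x\vert \le \vert P\setminus\uparrow x\vert < \kappa = \vert F\vert$), whence by Item~(1) applied to $F$ it contains a principal initial segment of cardinality $\ge\lambda$ for each $\lambda<\kappa$. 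Thus both hypotheses of Item~(2) hold.

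\emph{Main obstacle.} The delicate point is the limit stages of the recursion in Item~(2): one must guarantee that after removing $<\nu$ ``small'' pieces the residual final segment $P^{(\alpha)}$ is still nonempty, proper, and large enough (cardinality $\kappa$, or at least admitting principal initial segments up to the prescribed $\lambda_\alpha$). This is exactly where the two hypotheses — ``proper final segments contain large principal initial segments'' and ``subsets of size $<\nu$ are majorized'' — must be invoked in tandem, and getting the bookkeeping right (that the union of the complements removed so far has size $<\kappa$, using $\nu = \cf(\kappa)$ and $\vert P_\alpha\vert<\kappa$) is the crux of the argument.
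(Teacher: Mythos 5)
Your overall strategy is the paper's: Item (1) by summing $\vert\!\downarrow\! c\vert$ over a well-ordered cofinal chain, Item (2) by a transfinite recursion of length $\cf(\kappa)$ carving blocks between successively chosen points $x_\alpha$, Item (3) by verifying the two hypotheses of (2). But there is a genuine error at the heart of Item (2). You assert that $P_\alpha$ lying entirely below $P_\beta$ for $\alpha<\beta$ is ``automatic, since $P_\beta\subseteq P^{(\beta)}\subseteq$ the complement of $\downarrow\! x_\alpha$.'' It is not: an element outside $\downarrow\! x_\alpha$ need not dominate $x_\alpha$ (it may be incomparable to it), let alone dominate all of $\downarrow\! x_\alpha$. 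The residual final segment must be taken inside $\uparrow\! x_\alpha$, i.e.\ $P^{(\alpha+1)}:=P^{(\alpha)}\cap\uparrow\! x_\alpha$; the paper's block is exactly $Q_\alpha:=\downarrow\! x_\alpha\cap\bigcap_{\beta<\alpha}\uparrow\! x_\beta$, so that $y\in Q_\alpha$, $z\in Q_\beta$, $\alpha<\beta$ give $y\le x_\alpha\le z$. With that choice the limit stage is handled by the majorization hypothesis alone: $\bigcap_{\beta<\alpha}\uparrow\! x_\beta$ contains $\uparrow\! z$ for any upper bound $z$ of $\{x_\beta:\beta<\alpha\}$, a set of size $<\cf(\kappa)$. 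Your alternative justification at limits --- that the complements removed so far have total size $<\kappa$ --- is not available from the hypotheses of Item (2): $\vert P\setminus\uparrow\! x\vert<\kappa$ is a J\'onsson property, not something assumed there.

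Two further points. In Item (1), when $\cf(P)=\kappa$ your contradiction yields only $\kappa\le\vert C\vert\cdot\lambda=\kappa$, which is no contradiction; the ``minor cofinality bookkeeping'' does not close this case. It is instead immediate directly: a well-ordered cofinal chain of type $\kappa$ already supplies principal initial segments of every cardinality $<\kappa$, which is how the paper disposes of it. In Item (3), you obtain the cofinal chain from Theorem \ref{labtheo:cof}; in this paper that theorem is itself deduced from Item (3) of Lemma \ref{lem:cof-sum}, so the reference is circular. The chain should come from Lemma \ref{lemma:cofinality} (which you already invoke) combined with Lemma \ref{lem:basic-cofinality}. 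Granting that repair, your route to the first hypothesis of Item (2) --- every proper final segment of a J\'onsson poset is again J\'onsson of cardinality $\kappa$ with a cofinal chain, then apply Item (1) to it --- is sound, and is in fact more self-contained than the paper's, which simply cites the Kearnes--Oman result (Theorem \ref{theo:kearnes}) applied to each final segment.
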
 
\begin{proof}
$(1)$. Let $C$ be a cofinal chain.  With no loss of generality we may suppose that $C$ is well ordered and that its order type is an initial ordinal $\nu$, with $\nu$ regular. If $\nu= \kappa$ the conclusion holds for $C$ hence for $P$. Suppose $\nu<\kappa$. We have $P= \bigcup_{x\in C}\downarrow x$, hence $\vert P\vert \leq \sum_{x\in C}\vert  \downarrow x\vert $. If the conclusion  does not hold, there is some $\lambda< \kappa$ with $\vert \downarrow x\vert <\lambda$ for every $x\in C$, hence $\kappa\leq \lambda\cdot \nu< \kappa$, which is impossible.

$(2)$. Suppose that $P$ contains a lexicographical sum  $Q$ as described in the sentence. Let $x\in P$. Then  $\uparrow x$ contains the sum $\sum_{\alpha_0<\alpha<\cf(\kappa)} P_{\alpha}$ where $\alpha_0$ is such that $\uparrow x\cap P_{\alpha_0}\not = \emptyset$. Hence, $\vert \uparrow x \vert \geq \kappa$. If $Q$ is cofinal and  $A$ is any subset of cardinality strictly less than $\cf(\kappa)$, then $A$ is majorized. On an other hand, suppose that the two conditions are satisfied. Let $\nu:= \cf(\kappa)$.  If $\nu= \kappa$, then  according to Lemma \ref{lem:basic-cofinality}, $P$ contains a cofinal chain with order type $\kappa$ and $P$ contains a poset $Q$ as described. Suppose that $\nu<\kappa$. Let $(\kappa_{\alpha})_{\alpha< \nu}$ be an increasing sequence of cardinal numbers whose  supremum is $\kappa$. We define a sequence $(x_{\alpha})_{{\alpha}<\nu}$ of elements of $P$ such that $\vert Q_{\alpha}\vert \geq \kappa_{\alpha}$ where  \mbox{$Q_{\alpha}:=  \downarrow x_{\alpha} \bigcap (\bigcap_{\beta<\alpha} \uparrow x_{\beta})$}. 

$(3)$.   If $P$ is J\'onsson then every non-empty final segment of $P$ is J\'onsson too. Hence,  according to Theorem \ref{theo:kearnes},   the first condition of item $(1)$ holds. Now, according to item $(1)$ of Lemma \ref{lemma:cofinality}, every subset of size strictly less than $\cf(\kappa)$ is majorized.  From this,  $P$ contains a poset $Q$ as described. Note that since $Q$ has the same cardinality as $P$ and $P$ is J\'onsson, $Q$ is cofinal in $P$. \end{proof}

\begin{remark}With the help of  Item $(1)$ of Lemma \ref {lem:cof-sum}, Theorem  \ref{theo:kearnes} follows immediately from  Theorem \ref{labtheo:cof}.  On an other hand, if $P$ is J\'onsson then, according to Item (\ref{item-cofinality} of  Lemma  \ref{lem:cof-sum} above,   it contains a poset $Q$ as described in Item $(2)$. According to Lemma \ref{lem:cofinal-jonsson}, this poset $Q$ is cofinal in $P$, hence $P$ has a cofinal  chain thus  Theorem \ref{labtheo:cof} holds.  
\end{remark}

An improvement  of Item (\ref{item-cofinality}) is given in Theorem \ref {theo:maintheo}. 

%non-empty initial
%segment $I\Sub P$ which is updirected.  ${\cal I}(P)$ denotes the set
%of all initial segments of $P$, and ${\cal J}(P)$ is the set of 
%ideals,
%both ordered by inclusion.
%  If $X\Sub P$ we define
%$$\cf(X,P)=\min\{|Y|:Y\Sub P\wedge X\Sub \downarrow  Y\}.$$
%
%

\subsection{Purity}
In order  to describe (some)  J\'onsson posets we start with  the following notion.

 \begin{definition}\label{def:pure} A  poset $P$ is \emph{pure} if every proper initial segment $I$ of $P$ is strictly  bounded above (that is some $x\in P\setminus I$ dominates $I$). 
 \end{definition}
 
 This condition amounts to the fact that every non cofinal subset of $P$ is strictly bounded above (indeed, if a subset $A$ of $P$ is not cofinal, then $\downarrow A\not = P$ hence from purity, $\downarrow A$, and thus $A$, is strictly bounded above. The converse is immediate). 

An equivalent condition is this:

$\bullet $ \emph{For every $x\in P$ there is some $y\geq x$ such that $P\setminus \uparrow x\subseteq \downarrow y$}. 
% An other condition  for well founded posets in given in  $(iv)$ of Lemma \ref{lem:uniquewqo}. 
% Lemma \ref{lem:uniquewqo}.

Every poset with a largest element is pure. Every chain is pure.  \emph{Every  pure poset has a cofinal chain}. This last fact is consequence of the following lemma.  

\begin{lemma} \label{puritysequence} Let $P$ be a poset with infinite cofinality $\nu$. Then $P$ is pure iff it contains an  increasing cofinal sequence $(x_{\alpha})_{\alpha<\nu}$   such that 
\begin{equation}\label{eq:purity}
P\setminus \uparrow x_{\alpha}\subseteq \downarrow x_{\alpha+1}
\end{equation}
for all ordinal $\alpha$ such that $\alpha< \nu$. 
\end{lemma}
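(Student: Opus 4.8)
The plan is to prove the equivalence in \mbox{Lemma \ref{puritysequence}} by establishing the two implications separately, relying on \mbox{Lemma \ref{lem:basic-cofinality}} and the reformulations of purity already recorded in the text.

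For the easy direction ($\Leftarrow$), suppose $P$ contains an increasing cofinal sequence $(x_{\alpha})_{\alpha<\nu}$ satisfying \eqref{eq:purity}. Let $I$ be a proper initial segment of $P$ and pick $z\in P\setminus I$. Since the sequence is cofinal, there is some $\alpha<\nu$ with $z\leq x_{\alpha}$; as $I$ is an initial segment and $z\notin I$, we get $x_{\alpha}\notin I$, so $I\subseteq P\setminus \uparrow x_{\alpha}\subseteq \downarrow x_{\alpha+1}$, and since $x_{\alpha+1}\geq x_{\alpha}\notin I$ we have $x_{\alpha+1}\in P\setminus I$; thus $I$ is strictly bounded above. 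Hence $P$ is pure. (This direction does not even use that $\nu$ is the cofinality, only that the sequence is cofinal.)

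For the main direction ($\Rightarrow$), suppose $P$ is pure with infinite cofinality $\nu$. First I would note that by the reformulation of purity given just before the lemma, the set $\NC(P)$ of non-cofinal subsets is a $(<\nu)$-ideal: indeed purity shows every non-cofinal subset is strictly bounded above, in particular majorized, so every subset of cardinality $<\nu$ is majorized, whence (by the observation preceding \mbox{Lemma \ref{lemma:cofinality}}) $\NC(P)$ is closed under unions of $<\nu$ members. Applying \mbox{Lemma \ref{lem:basic-cofinality}} gives a cofinal well-ordered chain $(y_{\alpha})_{\alpha<\nu}$ of order type $\nu$. Now I would build the desired sequence $(x_{\alpha})_{\alpha<\nu}$ by transfinite recursion, arranging simultaneously that it dominates the chain $(y_{\alpha})$ (to guarantee cofinality) and that \eqref{eq:purity} holds. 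At stage $\alpha$: if $\alpha$ is a limit, the set $\{x_{\beta}:\beta<\alpha\}$ has cardinality $<\nu$, hence is majorized, so choose $x_{\alpha}$ above all earlier $x_{\beta}$ and also above $y_{\alpha}$ (using again that a two-element-or-fewer augmentation is majorized). At a successor stage, having $x_{\alpha}$, use the bulleted equivalent form of purity: $P\setminus\uparrow x_{\alpha}$ is a non-cofinal set, hence there is some $y\geq x_{\alpha}$ with $P\setminus\uparrow x_{\alpha}\subseteq\downarrow y$; then pick $x_{\alpha+1}$ above both $y$ and $y_{\alpha+1}$, which is possible since $\{y,y_{\alpha+1}\}$ is majorized. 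By construction $(x_{\alpha})$ is increasing, dominates the cofinal chain $(y_{\alpha})$ hence is cofinal, and satisfies \eqref{eq:purity}.

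The step I expect to require the most care is the successor step of the recursion: one must verify that $P\setminus\uparrow x_{\alpha}$ is genuinely non-cofinal (equivalently, that $\uparrow x_{\alpha}\neq\emptyset$, which is clear, and that $\downarrow(P\setminus\uparrow x_{\alpha})\neq P$ — this is exactly the bulleted reformulation of purity applied to $x_{\alpha}$), and then that one can still move above the witness $y$ while keeping up with the reference chain $(y_{\alpha})$; both rely on the fact, extracted from purity, that every subset of size $<\nu$ (here of size $2$) is majorized. A secondary subtlety is making sure the limit stages do not run out of room, which is handled by the same majorization fact together with $\nu$ being regular so that $\lvert\{x_{\beta}:\beta<\alpha\}\rvert<\nu$ for every $\alpha<\nu$. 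Everything else is a routine bookkeeping of initial and final segments.
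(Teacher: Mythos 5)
Your proof is correct and follows essentially the same route as the paper's: the easy direction is identical, and the main direction is a transfinite recursion of length $\nu$ that invokes the bulleted reformulation of purity at successor stages and the majorization of sets of size $<\nu$ (itself a consequence of purity plus the infinite cofinality) at limit stages. The only cosmetic difference is that you arrange the sequence to be increasing by always choosing upper bounds of the earlier terms, whereas the paper extracts a subsequence from a non-decreasing cofinal sequence and verifies increasingness a posteriori from \eqref{eq:purity}.
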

\begin{proof} Suppose that $P$ contains such a sequence. Let $x$ any element of $P$. Then $x\leq x_{\alpha}$ for some $\alpha <\nu$. Let $y:= x_{\alpha+1}$. We have $P\setminus \uparrow x\subseteq P\setminus \uparrow x_{\alpha} \subseteq \downarrow x_{\alpha+1}= \downarrow y$.  This proves that $P$ is pure. Conversely, suppose  that $P$ is pure.  As any poset, $P$ contains a cofinal sequence $(y_{\alpha})_{\alpha< \nu}$ which is non decreasing in the sense that $y_{\beta} \not \leq y_{\alpha}$ for $\alpha<\beta$. From the purity of $P$,  we may extract a subsequence satisfying Inequality (\ref{eq:purity}).  Indeed, define $\varphi: \nu\rightarrow \nu$ as follows. Set $\varphi(0):=0$. Suppose $\varphi$ be defined for all $\beta'$ with $\beta'<\beta$. The set  $\{y_{\beta'}: \beta'<\beta\}$ cannot be cofinal in $P$,  hence there is some $y_{\gamma} \in P\setminus \downarrow \{y_{\beta'}: \beta'<\beta\}$. If $\beta$ is a limit ordinal distinct of $0$, set $\varphi(\beta)= \gamma$. If not, $\beta= \beta'+1$ and  there is some $y\geq y_{\beta'}$ such that $P\setminus \uparrow y_{\beta'}\subseteq \downarrow y$. If there is no element strictly above $y$ then $y$ is the largest element of $P$,  a case we have excluded. Hence  $y< y_{\delta}$ for some $\delta$ and in fact $\delta >\beta'$. We set $\varphi(\beta)= \delta$. Setting $x_{\alpha}:= y_{\varphi(\alpha)}$,  Inequality (\ref{eq:purity}) is then satisfied. Let us check that the sequence is increasing. If not, let $\alpha < \beta$ with $x_{\alpha} \not \leq x_{\beta}$. We have $x_\beta\in P\setminus \uparrow x_{\alpha} \subseteq \downarrow x_{\alpha+1}$ hence $x_{\beta} \leq x_{\alpha+1}$ contradicting the fact that the sequence $(y_{\alpha})_{\alpha<\nu}$ is non decreasing. 
\end{proof}

$\bullet$ $(a)$ \label{bullet:strenghtening-pure} \emph{If a poset  $P$ is a strengthening of a pure poset $Q$ then $P$ is pure. The same conclusion holds if $P$ is a cofinal subset of a  pure poset}.
 
Indeed, let  $A$ be a proper initial segment of $P$. Then $A$ is an initial segment of $Q$ hence it is proper. Since $Q$ is pure, $A$ is majorized by some element $x$. Since $P$ is a strengthening of $Q$, $x$ majorizes $A$ in $P$. Thus $P$ is pure. Now if $P$ is a cofinal subset of $Q$ and $A$ a proper initial segment of $P$ then $\downarrow A\not = Q$ hence $\downarrow A$ is bounded above by some $x\in Q$. Such an element belongs to $P$, hence  is a bound of $A$ in $P$.

Pure posets are not necessarily J\'onsson. However, pure posets and J\'onsson posets are not far apart:

$\bullet$ $(b)$ \emph{If a poset with no largest element is pure it contains a cofinal subset which is J\'onsson} (indeed, it contains a cofinal chain).

$\bullet$ $(c)$ \emph{If $P$ is pure then $P$ is J\'onsson iff   $P$ is infinite and $\vert \downarrow x\vert <\vert P\vert$ for every $x\in P$}. 

Indeed, if $P$ is J\'onsson then trivially, $\vert \downarrow x\vert <\vert P\vert$ for every $x\in P$. Conversely, let $A$ be a proper initial segment of $P$. Since $P$ is pure, $A$ is bounded above, that is $A\subseteq \downarrow x$ for some $x\in P$. According to the second condition, $\vert \downarrow x\vert< \vert P\vert$ hence $\vert  A\vert <\vert P\vert$. This proves that $P$ is J\'onsson.

Purity and the condition above on principal initial segments imply J\'onsson. 
The converse holds if the cardinality is regular. In particular, every minimal poset is pure. This is  a consequence of Theorem \ref{labtheo:cof} or $(3)$ of Lemma  \ref{lem:cof-sum}.

\begin{theorem}\label{lem:pure-regular} If the cardinality of  $P$ is an infinite regular cardinal $\kappa$, then $P$ is J\'onsson  if and only if $P$ is pure and $\vert \downarrow x\vert <\kappa$ for every $x\in P$. \end{theorem}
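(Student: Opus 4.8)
The ``if'' direction needs no regularity and is already recorded: if $P$ is pure and $\vert \downarrow x\vert<\kappa=\vert P\vert$ for every $x\in P$, then $P$ is infinite (its cardinality $\kappa$ being infinite), so by bullet $(c)$ above $P$ is J\'onsson. For the ``only if'' direction, assume $P$ is J\'onsson with $\vert P\vert=\kappa$ regular. There are two things to check. First, $\vert \downarrow x\vert<\kappa$ for each $x\in P$: since a J\'onsson poset has no maximal element (the remark after Lemma \ref{lem:observation}), $\downarrow x$ is a \emph{proper} initial segment, hence of cardinality $<\kappa$ by definition of J\'onsson. Second, and this is the real content, $P$ is pure.

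To prove purity, let $I$ be a proper initial segment of $P$; we seek $z\in P\setminus I$ dominating $I$. If $I=\emptyset$ any element of $P$ works (and $P\neq\emptyset$), so assume $I\neq\emptyset$ and pick $y\in P\setminus I$. Since $I$ is an initial segment and $y\notin I$, we have $I\cap\uparrow y=\emptyset$, i.e. $I\subseteq P\setminus\uparrow y$; and $P$ being J\'onsson, $\vert P\setminus\uparrow y\vert<\kappa$ by Lemma \ref{lem:observation}. Now the regularity of $\kappa$ enters: $\nu:=\cf(\kappa)=\kappa$, so $P\setminus\uparrow y$ is a subset of cardinality strictly less than $\nu$, and Lemma \ref{lemma:cofinality}(1) gives an upper bound $z\in P$ of $P\setminus\uparrow y$, in particular $I\subseteq\downarrow z$.

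It remains to arrange $z\notin I$. If $z\notin I$ we are done. If $z\in I$, then since every element of $I$ is $\le z$ and $z\in I$, $z$ is the largest element of $I$, so $I=\downarrow z$; using again that $P$ has no maximal element, choose $z'>z$. Then $z'\notin I$ (otherwise $z'\le z<z'$) and $I=\downarrow z\subseteq\downarrow z'$, so $z'$ strictly bounds $I$ above. This establishes purity and completes the proof. (Alternatively, one can run this argument off the cofinal chain $C$ of Theorem \ref{labtheo:cof}, of order type $\kappa$ by regularity, replacing Lemma \ref{lemma:cofinality}(1) by the boundedness of $<\kappa$-many indices in $\kappa$; the steps are parallel.)

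\textbf{Main obstacle.} Nothing here is deep once Lemma \ref{lemma:cofinality}(1) and Lemma \ref{lem:observation} are available; the only points requiring care are (a) noticing that regularity of $\kappa$ is exactly what makes ``cardinality $<\kappa$'' subsets majorized, which is precisely the hypothesis that fails in the singular-cofinality examples of the introduction, and (b) upgrading the upper bound $z$ to one lying \emph{outside} $I$ (the definition of purity), which is handled by the absence of a maximal element in a J\'onsson poset.
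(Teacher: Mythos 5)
Your proof is correct. Both directions are handled properly: the ``if'' direction is indeed just bullet $(c)$, and for the ``only if'' direction the cardinality condition follows as you say from the absence of a maximal element. The interesting comparison is in the purity argument. The paper first extracts a well-ordered cofinal chain $C$ of order type $\kappa$ (via Lemma \ref{lemma:cofinality} together with Lemma \ref{lem:basic-cofinality} and $\cf(P)=\kappa$), pushes the proper initial segment $Q$ into $C$ by choosing majorants $y_x\in C$, and then uses regularity to see that the image, having size $<\kappa$, is not cofinal in $C$ and hence bounded. You bypass the chain entirely: you enclose $I$ in $P\setminus\uparrow y$ for some $y\notin I$, observe that this set has cardinality $<\kappa=\cf(\kappa)$, and invoke Lemma \ref{lemma:cofinality}(1) directly to get an upper bound. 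This is a genuine economy --- you only need part (1) of that lemma, not the full $\cf(P)=\kappa$ statement or the chain construction --- and it isolates exactly where regularity is used. You are also more scrupulous than the paper about the definition of purity, which asks for a bound \emph{outside} $I$: your case analysis ($z\in I$ forces $I=\downarrow z$, then pass to some $z'>z$) closes a point the paper leaves implicit (there the bound can be taken in $C$ strictly above the image $Q'$, hence outside $Q$, but this is not spelled out). Both arguments are sound; yours is the leaner of the two.
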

\begin{proof}
Suppose  that $P$ is J\'onsson. Trivially, $\vert \downarrow x\vert <\kappa$ for every $x\in P$. Now, we show that $P$ is pure. Let $Q$ be a proper initial segment of $P$ and  let $\lambda:= \vert Q\vert$. According to Lemma \ref{lemma:cofinality}, $P$ contains a well ordered cofinal chain $C$ of order type $\kappa$. Since $C$ is cofinal in $P$, every $x\in Q$ is majorized by some $y_x\in C$. Let $Q':=\{y_x: x\in Q\}$. We have $\vert Q'\vert \leq \vert Q\vert = \lambda$. Since $P$ is J\'onsson, $\lambda< \kappa$. Since $\kappa$ is regular, $Q'$ is not cofinal in $C$, thus it is majorized and hence $Q$ is majorized. Thus  $P$ is pure. \end{proof}

%We start with a trivial lemma
%\begin{lemma}
%Let $P$ be a J\'onsson poset and $\kappa:= \vert P\vert$. Then every subset $A$ of $P$ of cardinality strictly less than $cf(\kappa)$ is bounded above. 
%\end{lemma}
%
%\begin{proof}
%
%We prove that $\bigcap_{x\in A} \uparrow x$ is non-empty. We have $X:= P\setminus \bigcap_{x\in A} \uparrow x= \bigcup_{x\in A} P\setminus \uparrow x$. Hence $\vert X\vert\leq \sum_{x\in A}P\setminus \uparrow x$. Since $P$ is J\'onsson, each member of the sum is strictly less than $\kappa$; since $\vert A\vert <  cf(\kappa)$, the sum  is stricly less than $\kappa$. 
%\end{proof}
%

If $P$ is not pure,  it could happen that by deleting some initial segment the remaining set is pure. But this is not general. 

$\bullet$ $(d)$ \label{lem:initial} \emph{Let $P$ be  a poset and $Q$ be a proper  initial segment of $P$. Then $P$ is J\'onsson iff $P\setminus Q$ is J\'onsson, $\vert Q\vert <\vert P$ and $Q\subseteq \downarrow (P\setminus Q)$}.

%\begin{proof}
%Suppose that $P$ is J\'onsson. Then by definition, $\vert Q\vert <\vert P\vert$. Let $A'\subseteq P\setminus Q
%
%
%\end{proof}

%
%NO Due to the lemma above, the description of J\'onsson posets reduce to the description of those which are pure. 

\begin{theorem}\label{theo:maintheo}
Let $P$ be a  poset with  infinite  cofinality $\nu$. Then $P$ is pure iff $P$ is a strengthening of a lexicographical  sum $\sum_{a\in K }P_{a}$ where $K$  is  a chain of order type $\nu$ if $\nu$ if uncountable or a minimal poset if $\nu$ is countable and every  $P_{a}$ is a non empty poset. If $P$ is pure, then $P$ is J\'onsson iff each member $P_{a}$ of the sum above has cardinality $\kappa_{\alpha}$ strictly less than $\vert P\vert$. 
\end{theorem}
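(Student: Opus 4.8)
The plan is to treat the two biconditionals in turn, using Lemma \ref{puritysequence} (the ``purity sequence'' characterization) for the first, and the characterization $(c)$ together with Theorem \ref{lem:pure-regular}-style reasoning for the second.

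First I would prove: $P$ pure $\Rightarrow$ $P$ is a strengthening of a lexicographic sum of the stated form. By Lemma \ref{puritysequence}, since $P$ has infinite cofinality $\nu$, there is an increasing cofinal sequence $(x_\alpha)_{\alpha<\nu}$ with $P\setminus\uparrow x_\alpha\subseteq\downarrow x_{\alpha+1}$ for all $\alpha<\nu$. The idea is to slice $P$ along this sequence: set $Q_\alpha:=\downarrow x_\alpha\setminus\bigcup_{\beta<\alpha}\downarrow x_\beta$ (equivalently the elements first captured at stage $\alpha$); these $Q_\alpha$ partition $P$ since the sequence is cofinal, and each is non-empty after discarding empty stages (reindexing if necessary). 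The key point is that the original order refines the lexicographic sum along the index chain: if $u\in Q_\alpha$ and $v\in Q_\beta$ with $\alpha<\beta$ we need $u<v$ in the sum but this need only be \emph{implied by} (not equal to) the $P$-order — so we must check the $P$-order never goes ``backwards'', i.e. $v\not< u$. This is exactly where $P\setminus\uparrow x_\alpha\subseteq\downarrow x_{\alpha+1}$ is used: if $v\in Q_\beta$ with $\beta>\alpha$ then $v\notin\downarrow x_\alpha$, but also we must rule out $v\le u\le x_\alpha$ which is immediate. When $\nu$ is uncountable the index set $\{\alpha<\nu\}$ with its ordinal order is a chain of order type $\nu$ and we are done; when $\nu=\omega$ (or more generally $\nu$ countable — but a cofinality is regular, so $\nu=\omega$), the index chain $\omega$ is itself a minimal poset, which is the stated weaker hypothesis (here one could also just take the chain $\omega$, but phrasing it via ``minimal poset'' allows the converse extra flexibility). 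For the converse direction, if $P$ is a strengthening of such a sum $\sum_{a\in K}P_a$, then by the bullet $(a)$ (\emph{a strengthening of a pure poset is pure}) it suffices to check the lexicographic sum itself is pure: a proper initial segment $I$ meets only $P_a$ for $a$ below some non-maximal element of $K$ (using that $K$ has no maximal element, being of type $\nu$ or minimal), hence is bounded by any element of a later block.

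Next I would prove the second biconditional, assuming $P$ pure. By characterization $(c)$, a pure poset $P$ is J\'onsson iff it is infinite and $|\!\downarrow\! x|<|P|$ for every $x\in P$. So I must show this last condition is equivalent to: every block $P_a$ has cardinality $\kappa_a<|P|=:\kappa$. For the forward direction, fix $a\in K$; the block $P_a$ is contained in $\downarrow x$ for any $x$ lying in a block strictly above $a$ (such exists since $K$ has no top), so $|P_a|\le|\!\downarrow\! x|<\kappa$. For the reverse, given $x\in P$, it lies in some block $P_a$; then $\downarrow x$ is contained in $\bigcup_{b\le a}P_b\cup(\text{the part of }P_a\text{ below }x)$, and since $K$ has order type $\nu=\cf(\kappa)$ (by Corollary \ref{cor:cofinality}, as $P$ is J\'onsson its cofinality is $\cf(\kappa)$; note $K$ cofinal in $P$ forces its order type to be $\nu$) and each $|P_b|<\kappa$, the union of the $<\nu$ many blocks below or equal to $a$ has cardinality $<\kappa$ because $\kappa=\sup$ of fewer than $\cf(\kappa)$ smaller cardinals is impossible. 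Hence $|\!\downarrow\! x|<\kappa$, and $P$ is J\'onsson.

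The main obstacle I anticipate is the bookkeeping in the first implication: verifying that the $P$-order genuinely \emph{contains} (refines) the lexicographic sum order — i.e. that an element first appearing at stage $\beta$ is never $<$-below an element first appearing at an earlier stage $\alpha$ — and simultaneously that the blocks can be taken non-empty by a harmless reindexing, while preserving the property that the resulting index chain still has order type $\nu$ (or is still minimal when $\nu=\omega$: deleting empty levels from $\omega$ keeps a copy of $\omega$). A subtle point is that in the countable case one wants ``minimal poset'' rather than just ``$\omega$'', which matters only for the converse's generality; the cleanest route is to verify directly that any \emph{minimal} index poset $K$ yields a pure lexicographic sum (a proper initial segment of a minimal poset is finite, hence bounded above since minimal posets have no maximal element), so the two directions match up. I would also take care to invoke Corollary \ref{cor:cofinality} to pin down that $\nu$ is regular, so ``$\nu$ countable'' really means $\nu=\omega$.
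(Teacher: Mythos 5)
There is a genuine gap in your main implication (pure $\Rightarrow$ strengthening of a lexicographic sum). Two things go wrong together. First, the direction of ``strengthening'' is inverted in your verification: for $P$ to be a strengthening of the sum $\sum_{a\in K}Q_a$, the order of $P$ must \emph{contain} the sum's order, so for $u\in Q_\alpha$, $v\in Q_\beta$ with $\alpha<_K\beta$ you must establish the positive comparability $u<_P v$, not merely that $v\not<_P u$. Second, with the slicing you propose ($Q_\alpha:=\downarrow x_\alpha\setminus\bigcup_{\beta<\alpha}\downarrow x_\beta$, i.e.\ $h(x)=$ least $\alpha$ with $x\le x_\alpha$) and the ordinal chain $\nu$ as index set, that positive comparability genuinely fails between \emph{consecutive} blocks: an element $v$ with $h(v)=\alpha+1$ satisfies $v\le x_{\alpha+1}$ but need not lie above $x_\alpha$, so nothing forces $u<_P v$ for $u\in Q_\alpha$. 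The paper's own poset $N:=(\nu,\le_2)$ (where $\alpha\le_2\beta$ iff $\alpha=\beta$ or $\alpha+2\le\beta$) already witnesses this: it is pure, yet consecutive levels of the slicing are incomparable, and it is not a strengthening of any nontrivial lexicographic sum over the chain $\omega$. Your parenthetical ``one could also just take the chain $\omega$'' in the countable case is therefore false, and this is exactly why the theorem's statement allows a minimal poset rather than a chain when $\nu=\omega$.

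The paper repairs this by indexing the sum with $N=(\nu,\le_2)$ rather than with the chain $\nu$: the purity inequality $P\setminus\uparrow x_\alpha\subseteq\downarrow x_{\alpha+1}$ shows that $h(y)\ge\alpha+2$ forces $y\ge x_\alpha\ge x$ for every $x$ with $h(x)=\alpha$, so $P$ does extend the lexicographic sum over $N$. When $\nu=\omega$, $N$ is a minimal poset and one stops. When $\nu$ is uncountable, one regroups: taking an increasing cofinal sequence $(\mu_\lambda)_{\lambda<\nu}$ of \emph{limit} ordinals and setting $Q_\lambda:=\bigcup\{P_\alpha:\mu_\lambda\le\alpha<\mu_{\lambda+1}\}$, the gaps between blocks are now at least $\omega$ wide, so $\le_2$ restricted to distinct blocks is total and $P$ becomes a lexicographic sum over the chain $\nu$. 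Your treatment of the converse (via bullet $(a)$ and Proposition~\ref{prop:jonsson-extension}) and of the second biconditional (via $(c)$ and the regularity of $\nu$) is essentially sound and close to the paper's, but the decomposition step needs the $\le_2$ device and the regrouping to be correct.
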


%\begin{theorem}\label{theo:maintheo}
%Let $P$ be an infinite  poset of cardinality $\kappa$ with $\cf(\kappa)>\aleph_0$. 
%Then $P$ is a pure J\'onsson poset iff $P$ is the lexicographical sum $\sum_{\alpha\in C }P_{\alpha}$ where $C$ is  a chain of type $\cf (\kappa)$,   every $P_{\alpha}$ is a non empty poset of cardinality $\kappa_{\alpha}$ stricly less than $\kappa$. \end{theorem}
\begin{proof} 
A lexicographical sum as above is pure; thus from $\bullet$ $(a)$, every strengthening is pure. Moreover,  if each member of the sum has cardinality less that $\vert P\vert$, the sum is J\'onsson  hence, by Proposition 
\ref{prop:jonsson-extension},  every extension is J\'onsson. 

For the converse, let $N:= (\nu, \leq_2)$ where $\leq_2$ is the order defined on $\nu$ by $\alpha\leq_2\beta$ if $\alpha=\beta$ or $\alpha+2\leq \beta$. Then $N$ is pure and J\'onsson. 

\begin{claim}\label{claim:sum} If $P$ is pure then $P$ is a strengthening of a   lexicographical  sum $\sum_{a\in N }P_{a}$ where each $P_{a}$ is  a non empty poset 
\end{claim}

\noindent{Proof of Claim \ref{claim:sum}.} Let  $(x_{\alpha})_{\alpha<\nu}$ be the sequence given by Lemma \ref{puritysequence}.  For every $x\in P$,  let $h(x)$ be the least ordinal $\alpha$ such that $x \leq x_{\alpha}$, and for $\alpha<\nu$, let $P_{\alpha}:= \{x\in P: h(x)= \alpha\}$.  The order on $P$ extends the order on the sum $\sum_{a\in N }P_{a}$ provided that for every $x\in P_{\alpha}$, $y\in P_{\beta}$,  $\alpha+2 \leq \alpha$ implies $x\leq y$ in $P$. This is trivial:  we have $y\geq x_{\alpha}$, (otherwise $y\not \geq   x_{\alpha}$ and  since $P\setminus \uparrow x_{\alpha}\subseteq \downarrow x_{\alpha+1}$, we have $y \leq x_{\alpha +1}$ giving $h(y)\leq \alpha+1$ while $h(y)=\beta >\alpha+1$). Since $x\leq x_{\alpha}$  we get $x\leq y$ by transitivity,   as claimed.\hfill $\Box$

If $\nu=\omega$ then  $N$ is minimal  and   the sentence   in the theorem  holds. If $\nu$ is uncountable,  let $(\mu_{\lambda})_{\lambda<\nu}$ be an increasing  cofinal sequence in $\nu$ of limit ordinals; set $N_{\lambda}:=\{\alpha : \mu_{\lambda} \leq \alpha <\mu_{\lambda+1}\}$. Then $N$ is the lexicographical sum of its restrictions to the $N_{\lambda}$'s indexed by the chain $\nu$. Setting $Q_{\lambda}:=\bigcup_{a\in N_{\lambda} }P_{a}$, we get that $P$ is the lexicographical sum of the $Q_{\lambda}$'s.  
\end{proof}
Note that if $P$ is a pure J\'onsson poset of cardinality $\kappa$ with $cf(\kappa)> \aleph_0$, one can  easily show  that the incomparability graph of $P$ decomposes into at least $cf(\kappa)$ connected components,  each of cardinality strictly less than $\kappa$. This yields an other proof of Theorem \ref{theo:maintheo} in this  case. 

With Theorem \ref{lem:pure-regular} and Theorem \ref{theo:maintheo},  we have:

\begin{corollary}\label{cor:width}
If $P$ is a J\'onsson poset and $\kappa:=\vert P\vert $ is a successor cardinal then $P$ is the union of strictly less than $\kappa$ chains. 
\end{corollary}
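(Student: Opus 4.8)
The statement to prove is Corollary \ref{cor:width}: if $P$ is J\'onsson and $\kappa := \vert P\vert$ is a successor cardinal, then $P$ is the union of strictly less than $\kappa$ chains. Write $\kappa = \lambda^{+}$; since $\kappa$ is a successor it is regular, so Theorem \ref{lem:pure-regular} applies: $P$ is pure and $\vert \downarrow x\vert < \kappa$ for every $x \in P$, which for a successor cardinal means $\vert \downarrow x\vert \leq \lambda$ for every $x$. Also $\cf(\kappa) = \kappa$, so by Corollary \ref{cor:cofinality} the cofinality of $P$ is $\kappa$, and by Lemma \ref{puritysequence} (purity, with $\nu = \kappa$) there is an increasing cofinal sequence $(x_{\alpha})_{\alpha < \kappa}$ with $P \setminus \uparrow x_{\alpha} \subseteq \downarrow x_{\alpha+1}$ for all $\alpha < \kappa$.

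The idea is to build the chains inside the lexicographical-sum decomposition that purity provides. As in the proof of Claim \ref{claim:sum} in Theorem \ref{theo:maintheo}, for $x \in P$ let $h(x)$ be the least $\alpha$ with $x \leq x_{\alpha}$ and set $P_{\alpha} := \{x : h(x) = \alpha\}$; then $P$ is a strengthening of $\sum_{\alpha \in N} P_{\alpha}$, where $N = (\kappa, \leq_2)$, so any two elements with heights differing by at least $2$ are comparable in $P$. Now each $P_{\alpha}$ is contained in $\downarrow x_{\alpha}$, hence $\vert P_{\alpha}\vert \leq \lambda$. The plan is to cover the ``odd'' levels and the ``even'' levels separately: for each fixed parity, levels $P_{\alpha}$ with $\alpha$ of that parity are pairwise fully comparable across distinct levels (their indices differ by at least $2$), so a choice function selecting one element from each of these $\leq \kappa$ levels traces out a chain, and more generally a transversal indexed by the levels. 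Precisely: for each parity, enumerate each $P_{\alpha}$ of that parity by ordinals $< \lambda$ (possible since $\vert P_{\alpha}\vert \leq \lambda$); for each $\xi < \lambda$, the set $C_{\xi} := \{$the $\xi$-th element of $P_{\alpha} :$ $\alpha$ has the given parity and $P_{\alpha}$ has at least $\xi+1$ elements$\}$ picks at most one element per level, hence is a chain in $P$ because elements come from levels with pairwise index-difference $\geq 2$. This gives $\lambda$ chains for each parity, hence at most $\lambda + \lambda = \lambda < \kappa$ chains in total, covering all of $P$.

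One has to be slightly careful: within a single level $P_{\alpha}$ the elements need not be comparable, which is exactly why we spread a level's elements across different $C_{\xi}$'s rather than keeping a level intact; and we must check that $\bigcup_{\xi<\lambda} C_\xi$ (union over both parities) really is all of $P$, which is clear since every $x$ lies in some $P_{\alpha}$ and is the $\xi$-th element of $P_{\alpha}$ for exactly one $\xi < \lambda$. The two-parity split is needed because consecutive levels $P_{\alpha}, P_{\alpha+1}$ carry no comparability guarantee; restricting to one parity forces index-gaps $\geq 2$, activating the strengthening-of-$\sum_{a\in N}P_a$ property.

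**Main obstacle.** The only real content is verifying that a transversal across same-parity levels is genuinely a chain, i.e., that $h(y) \geq h(x) + 2$ forces $x \leq y$ — but this is exactly Claim \ref{claim:sum} already established in the proof of Theorem \ref{theo:maintheo}, so there is essentially nothing new to prove; the argument is a bookkeeping assembly of results proved earlier in the section (Theorem \ref{lem:pure-regular} to get purity and small principal segments, Lemma \ref{puritysequence} to get the defining sequence, Claim \ref{claim:sum}'s comparability, and regularity of successor cardinals to bound the number of chains). The one point meriting a line of justification is $\vert P_\alpha\vert\le\lambda$, which follows from $P_\alpha\subseteq\downarrow x_\alpha$ and $\vert\downarrow x_\alpha\vert<\kappa=\lambda^+$.
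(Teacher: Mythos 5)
Your proof is correct and follows essentially the route the paper intends (the corollary is stated there as an immediate consequence of Theorem \ref{lem:pure-regular} and Theorem \ref{theo:maintheo}): purity plus regularity gives the level decomposition with $\vert P_\alpha\vert\le\lambda$, and transversals across comparable levels yield $\lambda<\kappa$ chains. The only cosmetic difference is that you work with the $N=(\kappa,\leq_2)$-indexed sum of Claim \ref{claim:sum} and handle the missing comparabilities with the parity split, whereas the paper's Theorem \ref{theo:maintheo} already regroups the levels into a sum indexed by a genuine chain of type $\kappa$, making the transversals chains outright.
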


\begin{remark}\emph{It is not  true that an uncountable  pure and J\'onsson poset $P$ with  countable cofinality is the lexicographical sum $\sum_{n\in M }P_{n}$ where $M$ is  a minimal poset,  each $P_{n}$ is a non empty poset of cardinality $\kappa_{n}$ stricly less than $\kappa:= \vert P\vert$ and the supremum of $\kappa_{n}$ is $\kappa$. The reason is that a strengthening of such a poset is pure and J\'onsson but not necessarily a lexicographical sum. We give an example below}. 
\end{remark}
Let $Q$ be the lexicographical sum $\sum_{n\in M }Q_{n}$ where $M$ is  the poset on the set $\N$ of non-negative integers, with $n\leq m $ if either $n=m$ or $n+2 \leq m$,  each $Q_{n}$ is an antichain of  cardinality the $n$-beth number $\beth_n$ (where $\beth_0:= \aleph_0$, $\beth_{n+1}:= 2^{\beth_{n}}$). Trivially, $Q$ is pure and J\'onsson. We define a strengthening $P$ of $Q$ by adding just some well choosen comparabilities between pairs of consecutive levels $Q_n$ and $Q_{n+1}$, for $n\in \N$. Hence $P$ will be pure and J\'onsson. In order to do so, we suppose that $Q_{n+1} = \powerset (Q_{n})$ and we add all pairs $(x,y)$ such that $x\in Q_n$, $y\in Q_{n+1}$ and $x\in y$. We claim that the resulting poset $P$ does not decompose into a non trivial sum. Indeed, otherwise some factor  of the sum would be a \emph{proper autonomous subset} of $P$, that is,  a subset $F$ distinct from the empty set, any singleton and the whole set, and such that for every $x, x'\in F$ and $y\not \in F$, $x\leq y$ iff $x'\leq y$ and also $y\leq x$ iff $y\leq x'$.  This is impossible. For each non-negative integer, the ordering induces a bipartite graph  on $Q_{n} \cup Q_{n+1}$ which  is 
connected and such that distinct vertices have distinct neighborhoods. Since this graph has more than three vertices, it has no proper autonomous subset (e.g. see Proposition 1 of \cite{pouzet-zaguia}). Hence, for each $n$, $F\cap (Q_{n} \cup Q_{n+1})$ is either empty, a singleton or $Q_{n} \cup Q_{n+1}$. It is easy to see that this   conclusion extends  to $P$. This proves our claim.

\subsection{Semiorders and J\'onsson posets}\label{subsection semiorders}
The poset $N:= (\nu, \leq_2)$ which appears in the proof of Theorem \ref{theo:maintheo} is an example of semiorder. We examine below the role of these orders in the present study.  

Posets which do not embed the direct sum $2 \oplus 2$  of two $2$-element chains are called \emph{interval orders}, whereas posets which do not embed  $2 \oplus 2$   nor $3 \oplus 1 $, the direct sum of a $3$-element chain and a $1$-element chain, are called \emph{semiorders}.  Semiorders were introduced and applied in mathematical psychology by Luce \cite{luce}. For a wealth of information about interval orders and semiorders the reader is referred to \cite{fi-book} and \cite{pirlotvincke}.
%
%\begin{figure}[h]
%\begin{center}
% \includegraphics[width=0.3\textwidth]{fig1.eps}
%\end{center}
%\caption {}
%
%%\caption{\textcolor[rgb]}{1.00,0.00,0.00}
%\label{exception}
%\end{figure}

The name interval order comes from the fact that the order of a poset $P$ is an interval order   iff $P$ is isomorphic to a subset $\mathcal J$ of the set $Int(C)$ of
nonempty intervals of a totally ordered set $C$, ordered as follows: if $I, J\in
Int(C)$, then
\begin{equation}
I<J \mbox{  if  } x<y  \mbox{  for every  } x\in I  \mbox{  and every  } y\in J.
\end{equation}

This result is due to Fishburn \cite{fi}. See also Wiener \cite{wiener}.

The Scott and Suppes Theorem \cite{scottsuppes} states that the order of a \emph{finite} poset $P$ is a semiorder if and only if $P$ is isomorphic to a collection if intervals of length 1 of the real line, ordered as above. Extension of this result to infinite semiorders have been considered (see \cite{candeal}).

Interval orders and semiorders can be characterized  in terms of the  quasi-orders $\leq_{pred}$ and $\leq_{succ}$ associated with a given order. They are defined as follows. Let $P$ be a poset. Set $x\leq_{pred}y$ if $z<x$ implies $z<y$ for all $z\in P$ and set $x\leq_{succ}y$ if $y<z$ implies $x<z$ for all $z\in P$. The  relations $\leq_{pred}$ and $\leq_{succ}$ are  quasi-orders.  The strict orders associated to $\leq_{pred}$ and $\leq_{succ}$ extend  the strict order associated to  $\leq$, that is:
\begin{equation}\label{equa:1}
x< y\Rightarrow x<_{pred}y  \; \text{and} \; x<_{succ}y 
\end{equation}
for all $x,y \in P$.

%Furthermore:
%\begin{equation}\label{equa:2}
%x<y \Rightarrow y\not \leq_{pred} x
%\end{equation}

We recall the following result (see Theorems 2 and  7 of \cite{rabinovitch}).
\begin{lemma} \label{intervalorder} Let $P$ be a poset. Then $P$ is an interval order if and only if the quasi-order  $\leq_{pred}$ is a total quasi-order; equivalently $\leq_{succ}$ is a total quasi-order. Furthermore, $P$ is a semiorder if and only if the quasi-order intersection of $\leq_{pred}$ and $\leq_{succ}$ is total. 
\end{lemma}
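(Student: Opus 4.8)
The plan is to prove both equivalences by unwinding the definitions of $\leq_{pred}$ and $\leq_{succ}$ and reducing everything to the forbidden configurations $2\oplus 2$ and $3\oplus 1$. First I would record the elementary observations that $\leq_{pred}$ and $\leq_{succ}$ are dual to each other under passing to the opposite order $P^{op}$, and that their associated strict orders refine $<$ (this is Equation \eqref{equa:1} above). Consequently the two ``equivalently'' clauses in the interval-order statement reduce to each other, and I only need to prove: (a) $P$ is an interval order $\iff$ $\leq_{pred}$ is total; and then (b) $P$ is a semiorder $\iff$ $\leq_{pred}\cap\leq_{succ}$ is total.

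For (a), I would argue the contrapositive in one direction: if $\leq_{pred}$ is not total there exist $x,y$ with $x\not\leq_{pred}y$ and $y\not\leq_{pred}x$, i.e.\ there are $z<x$ with $z\not< y$ and $w<y$ with $w\not< x$. One then checks that $\{z,x,w,y\}$ carries a copy of $2\oplus 2$: we have $z<x$ and $w<y$, and the remaining four pairs ($z$ vs.\ $y$, $z$ vs.\ $w$, $x$ vs.\ $w$, $x$ vs.\ $y$) must all be incomparabilities. The nontrivial point is ruling out the ``wrong'' comparabilities — e.g.\ if $z<y$ that contradicts the choice of $z$; if $y<z$ then by transitivity $w<z<x$, contradicting $w\not<x$; and symmetrically for the $x$--$w$ pair; finally $z<w$ would give $z<w<y$, contradiction, and $w<z$ gives $w<z<x$, contradiction; $x<y$ gives $w<y$... actually $x<y$ together with $z<x$ gives $z<y$, contradiction, and $y<x$ gives $w<x$, contradiction. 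So indeed $P$ embeds $2\oplus2$. Conversely, if $P$ embeds $2\oplus 2$, say $a<b$, $c<d$ with the cross pairs incomparable, then $c<d$ witnesses $b\not\leq_{pred}d$ (since $a<b$ but $a\not<d$) — wait, I must be careful with the roles; the clean statement is that $b\not\leq_{pred}d$ because $a<b$ and $a\not<d$, and $d\not\leq_{pred}b$ because $c<d$ and $c\not<b$, so $\leq_{pred}$ is not total. This gives (a).

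For (b), one direction: a semiorder is in particular an interval order, so by (a) both $\leq_{pred}$ and $\leq_{succ}$ are total; I must show their intersection is still total, and this is exactly where the absence of $3\oplus 1$ is used. Suppose the intersection is not total: there are $x,y$ incomparable in it. Since $\leq_{pred}$ and $\leq_{succ}$ are each total, the only obstruction is $x<_{pred}y$ while $y<_{succ}x$ (or the symmetric case). Unwinding, $x<_{pred}y$ with $x\not<_{succ}y$... I should instead say: $x\leq_{pred}y$, $x\not\leq_{succ}y$, $y\leq_{succ}x$, $y\not\leq_{pred}x$. From $x\not\leq_{succ}y$ we get $z$ with $y<z$ but $x\not<z$; from $y\not\leq_{pred}x$ we get $w<y$ with $w\not< x$. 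Together with $w<y<z$ this should produce a $3$-chain $w<y<z$ disjoint-in-comparability from $x$: one checks $x$ is incomparable to each of $w,y,z$ using $x\leq_{pred}y$ and $y\leq_{succ}x$ to exclude the bad directions (e.g.\ $x<y$ is excluded because then $x<_{succ}y$ via $y\leq_{succ}x$... ). That yields $3\oplus 1$, a contradiction. Conversely, if $P$ is not a semiorder it embeds $2\oplus 2$ or $3\oplus 1$; the first case is handled by (a) (it kills totality of $\leq_{pred}$ hence of the intersection); for a copy $a<b<c$, $d$ of $3\oplus1$ one verifies $b\not\leq_{pred}d$ and $d\not\leq_{succ}b$ — more precisely that $b$ and $d$ are incomparable in $\leq_{pred}\cap\leq_{succ}$ — using that $a<b$, $a\not<d$ (so $b\not\leq_{pred}d$) and $b<c$, $d\not<c$ (so $d\not\leq_{succ}b$, hence $b\not\leq_{succ}d$ fails too — I need $d\not\leq_{succ}b$, which says $b<z$ and $d\not<z$ for some $z$, take $z=c$).

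I expect the main obstacle to be the bookkeeping in part (b): keeping straight which of the four conditions ($x\leq_{pred}y$, $x\not\leq_{pred}y$, $x\leq_{succ}y$, $x\not\leq_{succ}y$) hold, choosing the right witnesses $z,w$, and then verifying \emph{all} the incomparabilities needed to exhibit $3\oplus 1$ without accidentally using a comparability that was never assumed. Exploiting the $P\mapsto P^{op}$ duality (which swaps $\leq_{pred}$ and $\leq_{succ}$ and fixes the class of semiorders and of interval orders) should cut the case analysis roughly in half and is the organizing device I would use throughout.
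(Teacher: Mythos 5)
Your proposal is correct, and it is worth noting that the paper itself does not prove this lemma at all: it is recalled with a citation to Theorems 2 and 7 of Rabinovitch's paper on the dimension of semiorders. So your self-contained, forbidden-configuration argument is a genuine addition rather than a variant of the paper's proof. Both halves check out. In (a), the four elements $z,x,w,y$ you produce are automatically distinct once all six pairwise relations are settled (incomparable elements are distinct, and the two strict comparabilities handle the rest), and each of the eight ``wrong'' comparabilities does collapse by transitivity onto one of the two defining non-relations $z\not<y$, $w\not<x$, exactly as you verify. In (b), the reduction of the failure of totality of $\leq_{pred}\cap\leq_{succ}$ to the single configuration $x\leq_{pred}y$, $y\leq_{succ}x$, $x\not\leq_{succ}y$, $y\not\leq_{pred}x$ (up to swapping $x$ and $y$) is the right use of totality of each factor, and the witnesses $w<y<z$ together with the three incomparabilities $x\nsim w$, $x\nsim y$, $x\nsim z$ do follow: $x<y$ and $y<x$ are killed by $y\leq_{succ}x$ and $x\leq_{pred}y$ respectively (or by Equation~(\ref{equa:1})), and the $w$ and $z$ cases then reduce to these by transitivity. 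The one place your write-up stumbles is the converse of (b) in the $3\oplus1$ case, where the sentence about ``$b\not\leq_{succ}d$ fails too'' is garbled; but you self-correct to the two facts actually needed, namely $b\not\leq_{pred}d$ (witness $a$) and $d\not\leq_{succ}b$ (witness $c$), which together show $b$ and $d$ are incomparable in the intersection. A clean version of your argument would simply state those two witnesses and delete the detour. Your duality remark is also sound: $\leq_{pred}$ computed in $P^{op}$ is the converse of $\leq_{succ}$ computed in $P$, and totality is preserved under taking converses, so the two ``equivalently'' clauses of the interval-order statement do reduce to one another.
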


Note that the intersection of $\leq_{pred}$ and  $\leq_{succ}$ can be total and these quasi-orders can be distinct For an example, look at the direct sum of a $2$-element chain and a $1$-element chain). For an other example, if $\leq$ is the order $\leq_2$ on the ordinal $\nu$, then $\leq_{\succ}$ coincide with the natural order on $\nu$, $\leq_{pred}$ coincide with the natural order on all pairs distinct from the pair $(0,1)$ but do not distinguish between $0$ and $1$. Orders  $\leq$ such that  $\leq_{pred}$ and $\leq_{succ}$ are total orders and equal are studied in  \cite{pouzet-zaguia2}, under the name of  \emph{threshold orders}.

We present an other characterization which is relevant to our purpose. 

\begin{theorem}\label{thm: intervalorder} A poset $P:= (X, \leq)$ is an interval order, resp. a semiorder,  iff there is an order-preserving map $h$ from $P$ into a chain $K$ and a map, resp. an order-reversing  map,  $\Psi: K\rightarrow \mathcal F(K)$  such that:
\begin{enumerate}
\item $k\not \in \Psi(k)$ for every $k\in K$; 
\item   $x<y$ in $P$  iff  $h(y) \in \Psi (h(x))$ for every $x,y \in P$.

\end{enumerate}
\end{theorem}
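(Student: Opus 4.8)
The statement to prove is Theorem~\ref{thm: intervalorder}, giving a representation of interval orders (resp.\ semiorders) via an order-preserving map $h\colon P\to K$ into a chain together with a map (resp.\ order-reversing map) $\Psi\colon K\to\mathcal F(K)$ satisfying (1) $k\notin\Psi(k)$ and (2) $x<y$ iff $h(y)\in\Psi(h(x))$. The plan is to realize $K$ as the chain obtained from one of the total quasi-orders furnished by Lemma~\ref{intervalorder}, and to read off $\Psi$ directly from the strict order. I would treat the ``only if'' direction first and then note the ``if'' direction is an easy verification.

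For the interval order case: assuming $P$ is an interval order, Lemma~\ref{intervalorder} tells us $\leq_{succ}$ is a total quasi-order on $X$. Let $K:=X/\!\equiv_{succ}$ be the quotient chain (so $K$ is totally ordered by the induced strict order $<_{succ}$), let $h\colon P\to K$ be the canonical projection, and observe $h$ is order-preserving by (\ref{equa:1}). Now define, for $k=h(x)\in K$, the set $\Psi(k):=\{h(y): x<y\ \text{in}\ P\}=h(\uparrow\!x\setminus\{x'\,:\,x'\equiv x\})$. The key points to check are: (a) $\Psi(k)$ is well defined, i.e.\ depends only on $k$ and not on the representative $x$ --- this is precisely the definition of $\leq_{succ}$, since $x\equiv_{succ}x'$ means $x$ and $x'$ have the same strict upper bounds; (b) $\Psi(k)$ is a final segment of $K$ --- if $h(y)\in\Psi(h(x))$, i.e.\ $x<y$, and $h(z)\geq_{succ}h(y)$ i.e.\ $y\leq_{succ}z$, then by definition of $\leq_{succ}$ applied to $x<y$ we get $x<z$, hence $h(z)\in\Psi(h(x))$; (c) $k\notin\Psi(k)$ --- if $h(x)\in\Psi(h(x))$ then $x<y$ for some $y\equiv_{succ}x$, but then applying $\leq_{succ}$ to $x<y$ with $z:=y$ gives the contradiction $x<x$... here I need to be slightly careful and instead argue that $x\equiv_{succ} y$ together with $x<y$ forces, via $y\leq_{succ} x$ applied to $x<y$, that $x<x$, which is absurd; so (1) holds; (d) condition (2) holds by construction. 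That dispatches the interval order equivalence.

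For the semiorder case one must additionally arrange that $\Psi$ is order-reversing, and here the right chain to quotient by is \emph{not} $\leq_{succ}$ but rather the total quasi-order $\leq_{pred}\cap\leq_{succ}$ provided by the last sentence of Lemma~\ref{intervalorder}. So set $K:=X/(\equiv_{pred}\cap\equiv_{succ})$ with $h$ the projection, which is still order-preserving by (\ref{equa:1}), and define $\Psi(h(x)):=\{h(y):x<y\}$ as before. One rechecks well-definedness (now using that equivalent elements agree on strict upper bounds, which follows from the $\leq_{succ}$ component), that $\Psi(k)$ is a final segment, and $k\notin\Psi(k)$, exactly as above. The new content is that $\Psi$ reverses order: suppose $h(x_1)<_K h(x_2)$, meaning $x_1\leq_{pred}x_2$, $x_1\leq_{succ}x_2$, and not both equivalences; I must show $\Psi(h(x_2))\subseteq\Psi(h(x_1))$, i.e.\ $x_2<y\Rightarrow x_1<y$. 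This is exactly the statement that $x_1\leq_{pred}x_2$ (using the contrapositive-style reading: $z<x_2\Rightarrow z<y$... no --- rather one wants: from $x_2<y$ deduce $x_1<y$). The correct tool is $x_1\leq_{succ}x_2$ read backwards combined with $x_1\leq_{pred}x_2$: since $x_1\leq_{pred} x_2$ we have $z<x_1\Rightarrow z<x_2$, which is about predecessors; to conclude $x_1<y$ from $x_2<y$ I use instead that $x_1 \le_{pred} x_2$ is equivalent (for semiorders, via the total quasi-order) to the appropriate inequality --- I expect the clean argument is: $x_1<y$ because if not, then $x_1\not<y$; combined with $x_2<y$ and the semiorder condition $\leq_{succ}$ this should force $x_2\leq_{succ} x_1$, contradicting that $x_1,x_2$ are not both equivalent while $x_1\leq_{succ}x_2$. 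Finally the ``if'' direction: given $h$ and $\Psi$ with (1),(2), a direct computation shows $P$ embeds into intervals of $K$ (send $x$ to a suitable interval determined by $h(x)$ and $\Psi$), and when $\Psi$ is order-reversing these intervals can be taken ``of equal length'' in the combinatorial sense forbidding $3\oplus1$, so $P$ is a semiorder; alternatively one verifies directly that $2\oplus2$ (resp.\ also $3\oplus1$) cannot embed.

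\textbf{Main obstacle.} The routine parts --- well-definedness, the final-segment property, condition (1), and condition (2) --- follow mechanically from the definitions of $\leq_{pred},\leq_{succ}$. The delicate point, and the one I would spend the most care on, is proving in the semiorder case that $\Psi$ is \emph{order-reversing}: this is where the choice of quotienting by $\leq_{pred}\cap\leq_{succ}$ rather than by $\leq_{succ}$ alone becomes essential, and where the defining exclusion of $3\oplus1$ (equivalently the totality of $\leq_{pred}\cap\leq_{succ}$) is genuinely used. I would isolate this as a small lemma: in a semiorder, if $x_1\leq_{pred}x_2$ and $x_1\leq_{succ}x_2$, then every strict upper bound of $x_2$ is a strict upper bound of $x_1$.
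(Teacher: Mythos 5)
There is a genuine error in your ``only if'' direction for interval orders: you quotient by $\equiv_{succ}$, and with that choice neither your step (b) nor the backward half of condition (2) goes through. In step (b) you claim that $x<y$ together with $y\leq_{succ}z$ yields $x<z$; but by definition $y\leq_{succ}z$ says that strict \emph{upper} bounds of $z$ are strict upper bounds of $y$, which says nothing about the lower bound $x$. The implication ``$x<y$ and $y\leq_{\ast}z$ imply $x<z$'' is precisely the definition of $y\leq_{pred}z$, so you have swapped the two quasi-orders. Worse, condition (2) fails outright: in the semiorder $2\oplus 1$ with $a<b$ and $c$ incomparable to both, $b$ and $c$ have the same (empty) set of strict upper bounds, so $h(b)=h(c)$ in your quotient, whence $h(c)\in\Psi(h(a))$ although $a\not<c$. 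The repair is to work with $\leq_{pred}$: the paper chooses a total order $\preceq$ contained in $\leq_{pred}$ on $X$ itself (ties in the equivalence classes broken arbitrarily), takes $h$ to be the identity and $\Psi(x):=\{y: x<y\}$, which is a final segment of $(X,\preceq)$ exactly because $\preceq\subseteq\leq_{pred}$; since $h$ is injective, condition (2) is automatic. Your quotient by $\leq_{pred}\cap\leq_{succ}$ in the semiorder case is sound, and it is symptomatic of the same confusion that you present the order-reversal of $\Psi$ as the delicate point: $\Psi(h(x_2))\subseteq\Psi(h(x_1))$ unwinds to ``$x_2<y\Rightarrow x_1<y$'', which is \emph{verbatim} the definition of $x_1\leq_{succ}x_2$, already contained in your hypothesis $h(x_1)\le_K h(x_2)$; the ``small lemma'' you propose to isolate is a tautology, and no use of the $3\oplus 1$ exclusion is needed at that point (it was already spent in Lemma~\ref{intervalorder} to make the intersection total).

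The converse direction is also not proved: ``a direct computation shows $P$ embeds into intervals of $K$'' and ``one verifies directly that $2\oplus 2$ cannot embed'' are placeholders. The paper's argument here has real content: one must first check that the relation $x<_{\Psi}y$ defined by $h(y)\in\Psi(h(x))$ is transitive (this uses that $K$ is a chain, so the final segments $\Psi(h(x))$ and $\Psi(h(y))$ are nested, and condition (1) rules out the wrong nesting), and then show that the associated quasi-order $\leq_{pred}$ is total (from $h(x)\le h(y)$ in $K$ one gets $x\leq_{pred}y$ because the $\Psi$-values are final segments), with the order-reversal of $\Psi$ feeding the $\leq_{succ}$ comparison in the semiorder case, before invoking Lemma~\ref{intervalorder}. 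As written, your proposal establishes neither direction of the equivalence for interval orders.
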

\begin{proof} 

Suppose that $P$ is an interval order. By Lemma \ref{intervalorder} we may select a  total order $\preceq$  included into  $\leq_{pred}$. Set $K:= (X, \preceq)$, $h$ be the identity and  for each $x\in X$,  set $\Psi(x):= \{y\in X: x<y\}$. 
Then  $\Psi(x)$ is a final segment of $(X, \preceq)$;  indeed, first  $\Psi(x)$ is a final segment of $(X, \leq_{pred})$ (if  $y\in \Psi(x)$ and $z$ are such that $y\leq_{pred}z$, then  since $x<y$ we have $x<z$ hence $z\in \Psi(x)$, proving our assertion). Next, since $\preceq$ is included into $\leq_{pred}$, it follows that $\Psi(x)$ is a final segment of $K:= (X, \preceq)$. Conditions $(1)$ and $(2)$ hold trivially. 
If  $P$ is a semiorder then,  according to Lemma \ref{intervalorder},  the quasi-order intersection of $\leq_{pred}$ and $\leq_{succ}$ is total. Hence, we may select a total order $\preceq$  included   into this intersection. In this case, the map  $\Psi: P \rightarrow K:= (X, \preceq)$ is order decreasing (indeed, let $x\prec y$ and let $z\in \Psi(y)$; by definition, $y<z$. Since $x\prec y$, $x<z$ hence $z\in \Psi (x)$ proving 
$\Psi(y)\subseteq \Psi(x)$). 

 In order to prove that the converse holds, suppose  that there is an order-preserving map  $h$ from $P$ into a chain $K$ and  a map $\Psi: K\rightarrow \mathcal F(K)$  such that:
$(1')$: $k\not \in \Psi(k)$ for every $k\in K$ and  
$(2')$:  $x<y$ in $P$ whenever  $h(y) \in \Psi (h(x))$ for every $x,y \in P$.

Set  $x<_{\Psi} y$ if $h(y)\in \Psi (h(x))$. Hence Condition $(2')$ ensures that $x<_{\Psi} y$ implies $x<y$. 

With the claims below, we prove that the relation $<_{\Psi}$ is a strict order and the corresponding order $\leq_{\Psi}$ an interval order. Since Condition $(2)$ expresses that  $<_{\Psi}$ and $<$ coincides, this proves that the converse of the lemma holds.

\begin{claim}\label{claim:strictorder}
The relation $<_{\Psi}$  is a strict-order on $X$.
\end{claim}

\noindent {\bf Proof of Claim \ref{claim:strictorder}.} First, this relation is irreflexive: from $(1')$, \mbox{$h(x)\not \in \Psi(h(x))$}, that is $x\not <_{\Psi} x$. Next, it is transitive. Indeed, suppose  $x<_{\Psi} y$ and $y<_{\Psi }z$. We have  $h(y) \in \Psi (h(x))$ and  $h(z) \in \Psi (h(y))$. Since $K$ is a chain, $\Psi (h(x))$ and  $ \Psi (h(y))$ are comparable w.r.t. set inclusion. If $\Psi (h(x))\subseteq \Psi (h(y))$, then since $h(y) \in \Psi (h(x))$ we have $h(y) \in \Psi (h(y))$,  a fact which is exluded by $(1')$. Hence $ \Psi (h(y))\subset \Psi (h(x))$. Since $h(z) \in \Psi(h(y)$, we have $h(z)\in \Psi(h(x))$ that is $x<_{\Psi} z$, proving the transitivity.  \hfill              	$\Box$

 Let $\leq_{\Psi}$ be the order associated to $<_{\Psi}$, that is $x\leq_{\Psi}y$ if $x=y$ or 
 $x<_{\Psi}y$; let $\leq_{{\Psi}{pred}}$ and $\leq_{{\Psi} {succ}}$ be the preorder "pred" and "succ" associated with $\leq_{\Psi}$. Set $\Phi(x):= \{y\in X: h(y) \in \Psi (h(x))\}$.

 \begin{claim}\label{claim:pred} If $h(x)\leq h(y)$ then $x\leq_{{\Psi}{pred}}y$. 
\end{claim}
\noindent {\bf Proof of   Claim \ref{claim:pred}.} Let $z\in X$ such that $z<_{\psi} x$. We need to prove that $z<_{\psi} y$ that is $h(y)\in \psi (h(z))$. Since $z<_{\psi} x$, we have $h(x) \in \psi (h(z))$. Since $\psi (h(z))\in \mathcal F(K)$ and $h(x)\leq h(y)$ we have $h(y)\in \psi (h(z))$, as required. 
\hfill							$\Box$
\begin{claim} \label{claim:succ}  $\Psi (h(y))\cap h(P) \subseteq \Psi (h(x))$ iff $x\leq_{{\Psi}{succ}}y$.\end{claim}
\noindent {\bf Proof of Claim \ref{claim:succ}.}
Let $z\in X$ such that $y<_{\Psi} z$. This means $h(z)\in \Psi (h(y))$. Since $\Psi (h(y))\cap h(P) \subseteq \Psi (h(x))$, it follows $h(z)\in \Psi(h(x))$ that is $x<_{\Psi(h(x))} z$. Hence $x\leq_{{\Psi}{succ}}y$ as claimed. Conversely, suppose $x\leq_{{\Psi}{succ}}y$. Let $z\in X$ such that  $h(z)\in \Psi (h(y))\cap h(P)$. We have $y<_{\Psi} z$. Since $x\leq_{{\Psi}{succ}}y$ it follows $x<_{\Psi} z$, that is $z\in \Psi (h(x))$, proving that $\Psi (h(y))\cap h(P) \subseteq \Psi (h(x))$. 
\hfill														$\Box$
\begin{claim}  \label{claim:intorder} The order $\leq_{\Psi}$ is an interval order. This is a semiorder provided that  $h$ is order-reversing.  
\end{claim}

\noindent {\bf Proof of Claim \ref{claim:intorder}.}
Since $K$ is a chain, the images via $h$ of any two elements $x$ and $y$ of $P$ are comparable in $K$. According to Claim \ref{claim:pred},  $x$ and $y$ are comparable via the quasi order 
$\leq_{{\Psi}{pred}}$, hence from Lemma \ref{intervalorder}, $\leq_{\Psi}$ is an interval order.  \hfill 						$\Box$

With this claim the proof is complete. \end{proof}

%Conversely, suppose that there is a linear order $\preceq$ and a map $\Phi$ satisfying the conditions of the lemma. We prove that $\preceq$ is included into $\leq_{pred}$. According to Lemma \ref{intervalorder} above, it follows that $\leq$ is an interval order. Let $x,y, z$ such that $x\preceq y$ and  $z<x$. By definition of $\Phi$, $x\in \Phi(z)$. Since $\Phi(z)$ is a final segment of $(X, \preceq)$, $y\in \Phi(z)$ hence $z<y$ proving that $x\leq_{pred}y$. Next, we suppose that $\Phi$ is order decreasing. We prove that $\preceq$ is included into $\leq_{succ}$. Let $x,y, z$ such that $x\preceq y$ and  $y<z$. By definition of $\Phi$, $z\in \Phi(y)$. Since $\Phi$ is decreasing, $\Phi(y)\subseteq \Phi (x)$ hence $z\in \Phi(x)$ that is $x< z$ proving that $x\leq_{succ}y$. 

\begin{lemma}\label{lem:semiorder}Let $Q$ be  semiorder. 
\begin{enumerate}
\item If $Q$ has  no maximal element then it is pure.
\item  $Q$ is J\'onsson  iff $Q$ has no maximal element and the order on $Q$  has a strengthening into a J\'onsson linear order.
\end{enumerate}
 \end{lemma}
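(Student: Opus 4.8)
For item (1), the plan is to use the characterization of semiorders via the quasi-orders $\leq_{pred}$ and $\leq_{succ}$. Let $Q$ be a semiorder with no maximal element, and let $I$ be a proper initial segment of $Q$; I want to produce an element of $Q\setminus I$ dominating $I$. Pick $x\in Q\setminus I$. By Lemma \ref{intervalorder}, the quasi-order intersection $\leq_{pred}\cap\leq_{succ}$ is total, so every element $z\in I$ is comparable to $x$ via this quasi-order. Since $z\in I$ and $x\notin I$ and $I$ is an initial segment, $x\not< z$, which (combined with $z\sim_{succ} x$ or $z\leq_{succ}x$ in one direction and the same for $\leq_{pred}$) forces $z\leq_{pred}x$ and $z\leq_{succ}x$. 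Now using that $Q$ has no maximal element, pick $x'>x$; then for every $z\in I$, $z\leq_{succ}x$ together with $x<x'$ gives $z<x'$ by definition of $\leq_{succ}$. Hence $x'$ dominates $I$, and $x'\notin I$ since $x\notin I$ and $I$ is initial. So $Q$ is pure. I should be a little careful with the direction of the $\leq_{pred}/\leq_{succ}$ comparison between $z$ and $x$: the point is that $z\leq_{succ}x$ is exactly what propagates strict domination upward from $x$, and this is the case that is not excluded by $z\in I$, $x\notin I$; I expect this is where the small case-check lives, but it is routine given Lemma \ref{intervalorder} and \eqref{equa:1}.

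For item (2), one direction is nearly immediate from what is already available: if $Q$ is J\'onsson, then by Lemma \ref{lem:observation} (or the remark following it) $Q$ has no maximal element, and by Proposition \ref{prop:jonsson-extension} every linear extension of $Q$ is J\'onsson — and a linear extension is in particular a strengthening into a J\'onsson linear order, so the stated condition holds. For the converse, suppose $Q$ has no maximal element and the order on $Q$ has a strengthening $\leq'$ which is a J\'onsson linear order. By item (1), $Q$ is pure. Let $\kappa:=\vert Q\vert$. By $\bullet$ $(c)$ (the characterization of J\'onsson pure posets), it suffices to show $\vert\mathop{\downarrow}x\vert<\kappa$ for every $x\in Q$, where $\mathop{\downarrow}x$ is taken in $Q$. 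But $\mathop{\downarrow}x$ in $Q$ is contained in $\mathop{\downarrow'}x$, the principal initial segment of $x$ in the linear order $(Q,\leq')$, since $\leq'\supseteq\leq$; and $\mathop{\downarrow'}x$ is a proper initial segment of the J\'onsson linear order $(Q,\leq')$ — proper because a J\'onsson linear order has order type a limit ordinal (indeed an initial ordinal) and hence no maximum, so $x$ is not the top. Therefore $\vert\mathop{\downarrow'}x\vert<\kappa$, whence $\vert\mathop{\downarrow}x\vert<\kappa$, and $\bullet$ $(c)$ gives that $Q$ is J\'onsson.

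The main obstacle is really just item (1): getting the comparison between an element $z$ of the initial segment and the chosen element $x$ outside it to land in the useful direction for $\leq_{succ}$, and then checking that moving up from $x$ to a strict upper bound $x'$ (which exists precisely because there is no maximal element) converts $z\leq_{succ}x$ into $z<x'$ in $Q$. Everything in item (2) is then a short assembly of Lemma \ref{lem:observation}, Proposition \ref{prop:jonsson-extension}, item (1), and $\bullet$ $(c)$, together with the trivial remark that a J\'onsson linear order has no largest element so that every principal initial segment is proper.
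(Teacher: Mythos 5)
Your item (2) is correct and is in substance the paper's argument: both directions reduce to purity (item (1)) together with Proposition \ref{prop:jonsson-extension} and the fact that a principal initial segment of a J\'onsson linear order is proper; whether one then bounds $\downarrow\! x$ via $\bullet$ $(c)$ as you do, or bounds $Q\setminus \uparrow\! x\subseteq \downarrow\! y\subseteq \downarrow_L y$ and invokes Lemma \ref{lem:observation} as the paper does, is immaterial.

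Item (1), however, has a genuine gap at exactly the step you flagged as routine. Totality of the quasi-order $\leq_{pred}\cap\leq_{succ}$ gives, for $z\in I$ and $x\notin I$, that either ($z\leq_{pred}x$ and $z\leq_{succ}x$) or ($x\leq_{pred}z$ and $x\leq_{succ}z$); the condition $x\not<z$ does \emph{not} exclude the second alternative, because $x\leq_{pred}z$ and $x\leq_{succ}z$ together do not entail $x\leq z$. Concretely, take $Q=(\omega,\leq_2)$ with $n\leq_2 m$ iff $n=m$ or $n+2\leq m$ (the semiorder $N$ of the paper), $I=\{1\}=\downarrow\!1$, $x=0$, $z=1$. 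Then $x\not<z$ and $z\not<x$, yet $0\leq_{succ}1$ while $1\not\leq_{succ}0$ (since $0<_2 2$ but $1\not<_2 2$); and your conclusion indeed fails: $x'=2$ is a strict upper bound of $x=0$ but does not dominate $I$, as $1\not\leq_2 2$. So one upward step from an arbitrary $x\in Q\setminus I$ is provably insufficient; the repair is to go up \emph{two} steps. In your language: for $x<x'$, every $z\in Q\setminus\uparrow\! x$ does satisfy $z\leq_{pred}x'$ and $z\leq_{succ}x'$, because the alternative $x'\leq_{pred}z$ would force $x<z$ ($x$ being a strict predecessor of $x'$), contradicting $z\notin\uparrow\! x$; a further element $y>x'$ then yields $z<y$ from $z\leq_{succ}x'$. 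The paper reaches the same conclusion more directly from the forbidden configuration: pick $x<x'<y$ and observe that any $z\in Q\setminus\uparrow\! x$ with $z\not\leq y$ would be incomparable to each of $x$, $x'$, $y$, producing a $3\oplus 1$ inside the semiorder $Q$.
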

\begin{proof}
$(1)$. If $Q$ is empty, it is pure. Suppose that $Q$ is non-empty. Our aim is to prove that for every $x\in Q$ there is some $y\in Q$ which majorizes $Q\setminus \uparrow x$. Let $x\in Q$. Since $Q$ has no maximal element, $x$ is not maximal hence there is some $x'\in Q$ with $x<x'$; again $x'$ is not maximal, hence there is some $y\in Q$  with $x'<y$. This element 
$y$  will do. Indeed, let $z\in Q\setminus \uparrow x$. If $z\not \leq y$ then $z$ is incomparable to $x,x'$ and $y$, hence $Q$ contains a $3 \oplus 1 $, contradicting the fact that $Q$ is a semiorder. 

$(2)$ A J\'onsson poset has no maximal element (cf. Lemma  \ref{lem:observation}) and every strenghtening  is  J\'onsson (Proposition \ref{prop:jonsson-extension}).   For the converse, our aim is to prove that $\vert Q\setminus \uparrow x\vert <\vert Q\vert $ for every $x\in Q$. Let $x\in Q$.  Since $Q$ is a semiorder with no maximal element then, according to $(1)$,  it is pure, hence  there is some element $y\in Q$ such that $Q\setminus \uparrow x\subseteq \downarrow y$. If $L$ is any strengthening of the order on $Q$ we have $\downarrow y\subseteq \downarrow_L y$. If $L$ is a J\'onsson order, we have $\vert \downarrow_L y\vert <\vert Q\vert$. Hence, $\vert Q\setminus \uparrow x\vert< \vert Q\vert$. Proving that $Q$ is J\'onsson.  
 % there is a linear extension $L$ of the order $\leq$ on $P$ which is contained into $\leq_{pred}\cap \leq_{succ}$ (Lemma \ref{intervalorder}). Let $x\in P$. Since $P$ has no maximal element, there is some $y\in P$ with $x<y$. 
%We have:
%\begin{equation} \label{eq:semi-succ}
% \downarrow_Lx \subseteq \downarrow y.
%\end{equation}
%Indeed, let $z\in \downarrow_Lx$. Since $z\leq _{succ}x<y$ we have $z<y$, proving that the inclusion in (\ref{eq:semi-succ}) holds.  Next:
%\begin{equation} \label{eq:semi-pred}
%P\setminus \uparrow x \subseteq \downarrow_Ly.
%\end{equation}
%Indeed, let $z\not \in \downarrow_Ly$. Then $y<_Lz$ hence $y<_{prec} z$. Since $x<y$ we have $x<z$ hence  $z\not \in P\setminus \uparrow x$, proving that the inclusion in (\ref{eq:semi-pred}) holds. \\  
%
%
%Now, since $P$ has a J\'onsson extension, $\vert \downarrow t\vert <\vert P\vert$ for every  $t\in P$. Hence, from inclusion (\ref {eq:semi-succ}) we have $ \vert \downarrow_L x\vert <\vert P\vert$. Since $L$ is a linear order, this inequality for every $x\in P$  ensures that $L$ is J\'onsson. Since $L$ is J\'onsson, inclusion (\ref {eq:semi-pred}) ensures $ \vert \downarrow_L x\vert <\vert P\vert$ hence $P$ is J\'onsson.
\end{proof}

\begin{theorem}\label{thm:semiorder} A poset $P$ is pure and J\'onsson  iff the order on $P$ is a strengthening  of a  semiorder with no maximal element and has a strengthening into a J\'onsson linear order. \end{theorem}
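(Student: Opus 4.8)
\textbf{Proof plan for Theorem \ref{thm:semiorder}.}

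The plan is to exhibit this theorem as essentially a repackaging of Theorem \ref{theo:maintheo}, Lemma \ref{lem:semiorder}, and the bullet point $\bullet$ $(a)$ about strengthenings of pure posets, with Theorem \ref{thm: intervalorder} supplying the concrete semiorder when needed. One direction is immediate: if the order on $P$ is a strengthening of a semiorder $Q$ with no maximal element, then by item $(1)$ of Lemma \ref{lem:semiorder} $Q$ is pure, and by $\bullet$ $(a)$ any strengthening of a pure poset is pure, so $P$ is pure. If moreover $P$ has a strengthening into a J\'onsson linear order, then $P$ is infinite (a J\'onsson linear order is infinite) and, applying item $(2)$ of Lemma \ref{lem:semiorder} to the semiorder $Q$ --- whose strengthening into a J\'onsson linear order is inherited through $P$ --- we get that $Q$ is J\'onsson; then Proposition \ref{prop:jonsson-extension} gives that $P$, being a strengthening of $Q$, is J\'onsson as well.

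For the converse, assume $P$ is pure and J\'onsson. Since $P$ is J\'onsson it is infinite, has no maximal element (Lemma \ref{lem:observation}), and every linear extension of $P$ is a J\'onsson linear order (Proposition \ref{prop:jonsson-extension}); so the ``strengthening into a J\'onsson linear order'' requirement is automatic and it remains to produce a semiorder $Q$ with no maximal element such that the order on $P$ strengthens that of $Q$. Here I would invoke Theorem \ref{theo:maintheo}: since $P$ is pure with infinite cofinality $\nu$, the order on $P$ is a strengthening of a lexicographical sum $\sum_{a\in K} P_a$ where $K$ is a chain of order type $\nu$ if $\nu$ is uncountable and a minimal poset if $\nu = \omega$, each $P_a$ nonempty. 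The natural candidate for $Q$ is obtained from this sum by collapsing each $P_a$ to an antichain on its underlying set and keeping only the comparabilities \emph{between} distinct summands coming from the index structure --- that is, $Q := \sum_{a \in K'} \overline{P_a}$ where $\overline{P_a}$ is the antichain on the vertex set of $P_a$ and $K'$ is a semiorder strengthened by $K$. When $\nu$ is uncountable $K$ is a chain, so take $K' = K$ and $Q$ is then a lexicographic sum of antichains along a chain, which is a semiorder (it embeds neither $2\oplus 2$ nor $3\oplus 1$, since any two incomparable elements lie in the same summand). When $\nu = \omega$, $K$ is a minimal poset; here I would use that a minimal poset is itself a strengthening of a semiorder with no maximal element --- indeed, by Proposition \ref{prop:semiorder} a poset of minimal type already lies between a semiorder with no maximal element and a chain of type $\omega$ --- so replace $K$ by that underlying semiorder $K'$ and form $Q = \sum_{a\in K'}\overline{P_a}$, still a lexicographic sum of antichains along a semiorder, hence a semiorder. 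In all cases $Q$ has no maximal element because $K'$ has none, and the order on $P$ strengthens that of $Q$ since $P$ strengthens $\sum_{a\in K}P_a$ which in turn strengthens $\sum_{a\in K'}\overline{P_a}$.

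The step I expect to require the most care is verifying that a lexicographic sum of antichains along a semiorder is again a semiorder, and, more delicately, that the substitution of $K$ by $K'$ in the countable case is legitimate --- i.e., that $\sum_{a\in K}P_a$ really is a strengthening of $\sum_{a\in K'}\overline{P_a}$ when $K$ strengthens the semiorder $K'$. This comes down to the general fact that lexicographic sums are monotone in both the index order and the summands: strengthening the index poset or strengthening (or even just the trivial substitution by antichains of) the summands yields a strengthening of the sum. I would state this as a short lemma and check the two forbidden configurations $2\oplus 2$ and $3\oplus 1$ directly: any incomparable pair in $\sum_{a\in K'}\overline{P_a}$ sits inside a single $\overline{P_a}$, so a copy of $3\oplus 1$ or $2\oplus 2$ would force two indices comparable in $K'$ to be incomparable, or would force $K'$ itself to contain $3\oplus 1$ or $2\oplus 2$, neither of which can happen since $K'$ is a semiorder (a chain in the uncountable case). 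Once this monotonicity is in place, the theorem assembles from the cited results with no further obstacle.
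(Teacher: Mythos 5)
Your argument follows essentially the same route as the paper's: the forward direction is exactly the paper's (Lemma \ref{lem:semiorder} combined with $\bullet$ $(a)$ and Proposition \ref{prop:jonsson-extension}), and for the converse the paper likewise takes the lexicographic decomposition underlying Theorem \ref{theo:maintheo} and collapses the summands to antichains. There is, however, one step you should not leave as stated: in the countable-cofinality case you replace the minimal index poset $K$ by an underlying semiorder $K'$ by appealing to Proposition \ref{prop:semiorder}. In the paper's logical organization Proposition \ref{prop:semiorder} is itself \emph{deduced from} Theorem \ref{thm:semiorder} (see the Conclusion subsection), so as written your argument is circular. The repair is immediate and is what the paper does: invoke Claim \ref{claim:sum} rather than the final statement of Theorem \ref{theo:maintheo}. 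That claim exhibits $P$, for every infinite cofinality $\nu$, as a strengthening of $\sum_{a\in N}P_a$ with $N=(\nu,\leq_2)$, and $N$ is already a semiorder with no maximal element; replacing each $P_a$ by an antichain then yields the required semiorder uniformly, with no case split and no appeal to Proposition \ref{prop:semiorder}. (Since $N$ has a linear extension of order type $\nu$, the strengthening into a J\'onsson linear order is also immediate, though, as you observe, it already follows from Proposition \ref{prop:jonsson-extension}.)

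A small imprecision in your verification that $\sum_{a\in K'}\overline{P_a}$ is a semiorder: when $K'$ is a genuine semiorder rather than a chain, it is \emph{not} true that every incomparable pair lies in a single summand, since elements whose indices are $K'$-incomparable are also incomparable. The correct argument, which your second clause essentially contains, is that the index map sends any copy of $2\oplus 2$ or $3\oplus 1$ either onto a configuration with a repeated index --- impossible, because equal indices force incomparability while each such configuration demands a comparability through one of the two coinciding points --- or onto four distinct indices forming a copy of $2\oplus 2$ or $3\oplus 1$ in $K'$, which is excluded since $K'$ is a semiorder. With that adjustment and the de-circularized choice of $N=(\nu,\leq_2)$, your proof is complete and coincides with the paper's.
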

\begin{proof}
Suppose that the order on $P$ is a strengthening  of a  semiorder $Q$ with no maximal element and has a strengthening into a J\'onsson linear order. According to  Lemma \ref {lem:semiorder},  $Q$ is pure and J\'onsson. According to  $\bullet$$(a)$ and Proposition \ref{prop:jonsson-extension}, $P$ is pure and J\'onsson. Conversely, suppose that  the order on $P$ is pure and  J\'onsson.  Apply  Claim \ref{claim:sum} of Theorem \ref{theo:maintheo}, $P$  is a strengthening of a lexicographical  sum $\sum_{a\in N}P_{a}$ where $N:= (\nu, \leq_2)$. We may suppose that the $P_{a}$'s  are antichains; since  $N$  is a semiorder with no largest element, this lexicographical sum is a semiorder with  no maximal element. Since $N$ has a  linear extension of order type $\nu$, $P$ has a strengthening into a J\'onsson order. 
  \end{proof}

\subsection{Conclusion}
Since every  countable J\'onsson poset is  pure and every countable J\'onsson chain has order type $\omega$, it follows from Theorem \ref {thm:semiorder} above that  a countable poset $P$  is J\'onsson   iff the order of $P$ is between   a semiorder with no maximal element and a linear order of type $\omega$. This is Proposition \ref{prop:semiorder}. 

%More precisely, 
%
%\begin{theorem} A poset $P$ is minimal iff the order on $P$ is a strenghening  of a  semiorder with no maximal element and has a strengthening into an order of type $\omega$. \end{theorem}
%

If the cardinality $\kappa$ of $P$ is uncountable, there are two cases to consider: $\kappa$ is regular, $\kappa$  is singular. 

If $\kappa $ is  a regular cardinal, then by Theorem \ref{thm:semiorder},  Proposition \ref{prop:semiorder} extends  verbatim:
 $P$ is J\'onsson  iff the order of $P$ is between   a semiorder with no maximal element and a J\'onsson linear order.
 
 But,  in this case, $\kappa$ being uncountable, we have a much more precise result:  $P$ is a lexicographical sum indexed by the ordinal  $\kappa$ of posets of cardinality less than $\kappa$ (cf. Theorem \ref{theo:maintheo}). 
 
 If $\kappa$ is singular,  then 
 
  $\bullet$ either   $\cf(\kappa)$ is uncountable and this conclusion still holds provided that $P$ is pure, 
 
 $\bullet$ or   $\cf (\kappa)= \aleph_0$. In this case, $P$ is a pure J\'onsson poset iff the order of $P$ is a strengthening of a lexicographic sum $\sum_{a\in K} P_a$ where $K$ is a minimal poset and $\vert P_a\vert <\kappa$ for every $a\in K$ (Theorem \ref{theo:maintheo}).

\begin{problem} Find a  characterization of   J\'onsson posets of singular cardinality $\kappa$. \end{problem}

%\begin{corollary}
%Let $P$ be a well founded poset. If $P$ is weakly uniform and $\kappa:= \mathbf h(P)$ is an  uncountable  regular initial ordinal then $P$ 
%
%\end{corollary}

%and $\varphi: \kappa\rightarrow \kappa$ is an extensive map then there is a partition of $\kappa$ into $\kappa$ intervals $A_{\alpha}, \alpha<\kappa$,  all invariant by $\varphi$ (that is $\varphi(A_{\alpha}) \subseteq A_{\alpha}$ for every $\alpha<\kappa$). 

\section{Uniformity}

Behind the properties of  a pure and J\'onsson poset $P$ (as in $(v)$ of Proposition \ref{label:prop:minimaltype} or in the proof of Claim \ref{claim:sum} of Theorem \ref{theo:maintheo} ) are the properties of  a function $h$ from $P$ into the ordinal numbers. This suggests the following  development. 

\subsection{Uniformity and purity}
\begin{definitions}\label{def:uniform} 
Let $P$ be a poset and $h$ be an order-preserving map from $P$ onto a chain $K$.  We say that $P$ is:

$\bullet$  \emph{${h}$-weakly uniform} if there is a map $\varphi: K\rightarrow K$ such that: 

\begin{equation}\label{weakly uniform}
 \forall x,y\in P (  \varphi (h(x))< h(y) \Longrightarrow x< y). 
\end{equation}

$\bullet$  \emph{$h$-uniform} if  there is a map $\varphi: K\rightarrow K$ such that: 
\begin{equation}\label{uniform}
\forall x,y\in P (\alpha\in K,  h(x)\leq \alpha \; \text{and}\;  h(y)> \varphi (\alpha)  \Longrightarrow x< y).
\end{equation}

$\bullet$ \emph{${h}$-minimal} if  the image of every non cofinal subset of $P$ is non cofinal in $K$, that is 
\begin{equation}\label{minimal}
\downarrow \{h(x): x\in A\} \not = K
\end{equation} for every proper initial segment $A$ of $P$. 
\end{definitions}

These notions, with a slight variation,  were originally defined in \cite {assous-pouzet-Nov86} for well founded posets with the height function ${\bf h}_P$ as an order preserving function. 

For an example, if  $P$  is  $h$-uniform with $\varphi$ being the identity then it is the lexicographical sum of posets indexed by the chain $K$.  
The existence of a map  $\varphi$ satisfying Condition (\ref{weakly uniform}) is a specialization of part of the conditions in Theorem \ref{thm: intervalorder}. Indeed, to $\varphi$  associate the map $\psi: K\rightarrow \mathcal F(K)$, defined by setting 
$\psi (k):= ]\varphi (k) \rightarrow [$. Then, by construction $k\not \in \varphi(k)$ for all $k\in K$. And  Condition (\ref{weakly uniform}) amounts to $x<y$ whenever $h(y)\in \psi (h(x))$. 
%Let $P$ be a well founded poset and $\kappa:= \mathbf h(P)$. We say that $P$ is:
%
%$\bullet$  \emph{weakly uniform} if there is a map $\varphi: \kappa\rightarrow \kappa$ such that: 
%
%\begin{equation}\label{weakly uniform}
% \forall x,y\in P (  \varphi (\mathbf h_P(x))\leq \mathbf h_P(y) \Longrightarrow x\leq y). 
%\end{equation}
%
%$\bullet$  \emph{uniform} if  there is a map $\varphi: \kappa\rightarrow \kappa$ such that: 
%\begin{equation}\label{uniform}
%\forall x,y\in P (\alpha<\mathbf h(P),  \mathbf h_P(x)\leq \alpha \; \text{and}\; \mathbf h_P(y)\geq \varphi (\alpha)  \Longrightarrow x\leq y).
%\end{equation}
%
%$\bullet$ \emph{height-minimal} if $\mathbf h(A)<\mathbf h(P)$ for every proper initial segment $A$ of $P$. 
%\end{definitions}
%
%
A map $\varphi$ witnessing that $P$ is $h$-weakly uniform or $h$-uniform is \emph{extensive}, that is $\varphi(\alpha)\geq \alpha $ for every $\alpha\in K$ (if, otherwise $\varphi(\alpha)<\alpha$ for some $\alpha$ then let  $x$ and $y$ such that $h(x)= \alpha$ and $h(y)= \varphi(\alpha)$;  we have $\varphi(h(x)) = {h(y)}$. Since $h$ is order preserving, $x\not < y$, so neither Condition (\ref{weakly uniform}) nor Condition (\ref{uniform}) may hold).    If $P$ is $h$-uniform and $K$ well-ordered, we may suppose that some  map  $\varphi$ witnessing it is order preserving: indeed, for each $\alpha\in K$, set $P_{\alpha}:= h^{-1}(\alpha):= \{x\in P: h(x)=\alpha\}$, $P_{\leq \alpha}:=\bigcup_{\gamma\leq\alpha} P_{\gamma}$,  $P_{\geq \alpha}:=\bigcup_{\gamma\geq\alpha} P_{\gamma}$ and let $\beta$ be the  least member of $K$ such that 
$P_{\leq \alpha}$  is dominated by every element of $P_{\geq\beta}$. This process
defines  an order preserving map for which $P$ is $h$-uniform. Clearly, if $P$ is $h$-uniform then it is $h$-weakly uniform. Conversely, if $P$ is $h$-weakly uniform and some $\varphi$ witnessing it is order preserving, then $P$ is $h$-uniform. Also, if $\varphi$ witnesses that $P$ is $h$-weakly uniform, and $K$ is well ordered,  set $\varphi^*(\alpha):= \Sup_{\nu< \alpha}\varphi(\nu)$;  if $\varphi^*(\alpha)\in K$ for every $\alpha\in K$ then $P$ is $h$-uniform. In particular,  if the order type of $K$ is a regular initial ordinal  then $P$ is $h$-uniform.

%A characterization, far from a description, of wqo's which have a unique order type of linear extension is given below. 
%Let $A\subseteq P$, set $h (A):= \{h(x): x\in A\}$. Observe that $\downarrow h (A)= h(\downarrow A)$. In particular, if  $A$ is cofinal in $P$ then $h (A)$ is cofinal in  $h(P)$. 

\begin{theorem}
A poset $P$ with no largest element  is pure if and only if it is  $h$-uniform for some order-preserving map $h$ from  $P$ into some infinite limit ordinal $\nu$. If $P$ is well founded, the  height function will do. 
\end{theorem}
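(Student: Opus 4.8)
The plan is to prove the two directions separately, using the characterization of purity via the sequence from Lemma \ref{puritysequence} for the forward direction and the equivalent pointwise form of purity for the backward direction. First I would handle the easy implication: suppose $P$ is $h$-uniform via some order-preserving $h\colon P\to\nu$ with witness $\varphi\colon\nu\to\nu$, where $\nu$ is an infinite limit ordinal. Given a proper initial segment $A$ of $P$, I want to strictly bound it above. Since $h(A)$ is an initial segment of the image and $\nu$ is a limit ordinal, either $h(A)$ is bounded in $\nu$ or $h(A)$ is cofinal in $\nu$; in the first case pick $\alpha\in\nu$ above all of $h(A)$, and since $\varphi$ is extensive (as noted in the text preceding the theorem), $\varphi(\alpha)>\varphi^{-}\geq$ everything in $h(A)$, so any $y$ with $h(y)>\varphi(\alpha)$ dominates all of $A$ by (\ref{uniform}); such $y$ exists because $\nu$ is a limit ordinal and $h$ is onto. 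In the second case $h(A)$ cofinal would force $A$ to be cofinal in $P$ (again by (\ref{uniform}): every $z\in P$ has $h(z)\leq\alpha$ for some $\alpha\in h(A)$, and then picking $x\in A$ with $h(x)>\varphi(\alpha)$ gives $z<x$ — wait, here I need to be careful about the direction, so actually I would argue that $h(A)$ cannot be cofinal unless $A=P$, contradicting properness). Either way $A$ is strictly bounded, so $P$ is pure.

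For the converse, suppose $P$ has no largest element and is pure. Let $\nu:=\cf(P)$, which is infinite (a pure poset with no largest element has no finite cofinal set since it has no maximal element, as any maximal element of a finite cofinal set would be a largest element). By Lemma \ref{puritysequence}, $P$ contains an increasing cofinal sequence $(x_\alpha)_{\alpha<\nu}$ with $P\setminus\uparrow x_\alpha\subseteq\downarrow x_{\alpha+1}$ for all $\alpha<\nu$. I would then define $h\colon P\to\nu$ by $h(x):=$ the least $\alpha<\nu$ such that $x\leq x_\alpha$ (well-defined by cofinality of the sequence); this is exactly the map $h$ used in the proof of Claim \ref{claim:sum} in Theorem \ref{theo:maintheo}, and it is order-preserving onto an initial segment of $\nu$ — in fact onto $\nu$ after reindexing, or one works with $K:=h(P)$, a limit ordinal. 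The witness is $\varphi(\alpha):=\alpha+1$: if $h(x)\leq\alpha$ and $h(y)>\varphi(\alpha)=\alpha+1$, then $y\not\leq x_\alpha$ (else $h(y)\leq\alpha$), so $y\in P\setminus\uparrow x_\alpha$ would give... no — rather $y\notin\downarrow x_{\alpha}$ means I should use the complementary inclusion: the defining property gives $P\setminus\uparrow x_\alpha\subseteq\downarrow x_{\alpha+1}$, so if $y\not\geq x_\alpha$ then $y\leq x_{\alpha+1}$, i.e. $h(y)\leq\alpha+1$, contradicting $h(y)>\alpha+1$; hence $y\geq x_\alpha\geq x$ (since $h(x)\leq\alpha$ means $x\leq x_\alpha$), and as $h(y)>h(x)$ forces $y\neq x$, we get $x<y$. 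So (\ref{uniform}) holds and $P$ is $h$-uniform.

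Finally, for the well-founded case: if $P$ is well founded and pure, I claim the height function $\mathbf h_P$ works. One should check that $\mathbf h_P$ is order-preserving onto an ordinal (the height of $P$), which is standard, and that it is an infinite limit ordinal — here I would invoke that $P$ pure with no largest element is infinite (from $\bullet(c)$ and the structure, or directly: a finite poset has a maximal element, and purity plus no largest forces the cofinal chain to be infinite), and that the height of such a poset is a limit ordinal since a successor height $\beta+1$ would put the top level $P_\beta$ as maximal elements. Then I would argue that $\mathbf h_P$-uniformity follows from purity by the same kind of argument as above, or more cleanly by producing an order-preserving $\varphi$ via the process described in the text before the theorem (for each $\alpha$ let $\varphi(\alpha)$ be least so that $P_{\leq\alpha}$ is dominated by all of $P_{\geq\varphi(\alpha)}$) — the point being that purity guarantees each $P_{\leq\alpha}$, being a proper initial segment when $\alpha$ is not the top, is strictly bounded above, hence dominated from some level on. The main obstacle I anticipate is the bookkeeping around whether $h$ maps onto $\nu$ exactly versus onto an initial segment, and ensuring the target is genuinely a limit ordinal in all cases; this is where I would need to be most careful, reconciling $\cf(P)$ with the order type of $h(P)$ and handling the degenerate possibility that consecutive levels collapse under $h$.
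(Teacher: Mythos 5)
The equivalence itself is in order: your forward direction (splitting on whether $h(A)$ is bounded in $\nu$ or cofinal, and in the latter case showing the initial segment $A$ must be all of $P$) and your converse (taking the sequence $(x_\alpha)_{\alpha<\nu}$ from Lemma \ref{puritysequence}, setting $h(x)$ to be the least $\alpha$ with $x\leq x_\alpha$ and $\varphi(\alpha):=\alpha+1$) are essentially the paper's argument, and the verifications you carry out are the right ones. The worry you single out at the end --- whether $h$ is onto $\nu$ and whether the target is genuinely a limit ordinal --- is harmless: since the sequence is increasing one gets $h(x_\alpha)=\alpha$, so $h$ is onto, and a maximal element of a pure poset is a largest element (apply purity to $P\setminus\{m\}$), so both $\cf(P)$ and, in the well-founded case, $\mathbf h(P)$ are infinite limit ordinals.

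The genuine gap is in the last assertion of the statement, that for well-founded pure $P$ the height function itself witnesses uniformity. You write that purity makes $P_{\leq\alpha}$ strictly bounded above, ``hence dominated from some level on.'' That inference is precisely the nontrivial point and does not follow from a single application of purity: knowing that \emph{one} element $x$ dominates $P_{\leq\alpha}$ says nothing about the other elements of large height, which could a priori be incomparable to parts of $P_{\leq\alpha}$. The paper closes this with a second application of purity, to the set of counterexamples: if for every $\beta>\alpha$ some $x_\beta\in P_\beta$ fails to dominate $P_{\leq\alpha}$ (missing some $y_\beta\in P_{\leq\alpha}$), then $\{x_\beta:\alpha<\beta<\mathbf h(P)\}$ has unbounded height, hence is not majorized, hence --- by purity in the contrapositive form ``not majorized implies cofinal'' --- is cofinal in $P$; so some $x_\beta$ lies above the majorant $x$ of $P_{\leq\alpha}$ and therefore above $y_\beta$, a contradiction. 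Your fallback, ``the same kind of argument as above,'' does not settle this either, because the map $h$ built from the cofinal sequence need not be the height function, and uniformity for one order-preserving map does not transfer to another. You need to supply this second use of purity to complete the well-founded case.
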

\begin{proof}

Suppose that $P$ is  $h$-uniform. We prove that it is pure. Let $A$ be a non cofinal subset of $P$. Pick  some $x\in P\setminus \downarrow A$. Let $\alpha:= h (x)$. Since $P$ is $h$-uniform, $\downarrow A$ is disjoint from $P_{>\delta}:=\bigcup_{\beta > \delta} P_{\beta}$ where  $\delta:=\varphi(\alpha)$, $\varphi$ witnesses that $P$ is $h$-uniform, $P_{\beta}:= h^{-1}(\beta)$.  Hence,  $\downarrow A \subseteq  P_{\leq \delta}:= \bigcup_{\gamma\leq\delta} P_{\gamma}$. Since $P$ is $h$-uniform, any $y\in  P_{>\delta}$ dominates $\subseteq  P_{\leq \delta}$, hence dominates $A$. Hence $P$ is pure.  For the converse, let $\nu:=\cf(P)$. Since $P$ is pure with  no largest element, $\nu$ is an infinite limit ordinal. Let $h:P \rightarrow \nu$ be defined as in the proof of   Claim \ref {claim:sum}.  Let $x, y\in P$ with $h(x) \leq \alpha$ and 
$h(y) > \alpha+1$. Then, we have $x<y$. Hence,  $P$ is $h$-uniform with $\varphi:  \nu\rightarrow \nu$ defined by $\varphi(\alpha):= \alpha+1$.  

Suppose that $P$ is well founded and  is pure. To prove that $P$ is $\mathbf {h}_P$-uniform, it suffices to prove that for every $\alpha<\mathbf h(P)$ there is some $\beta<\mathbf h(P)$ such that every element of $P_{\beta}$ dominates $P_{\leq \alpha}:=\bigcup_{\gamma\leq\alpha} P_{\gamma}$ (indeed, for every $\beta'>\beta$, every element of $P_{\beta'}$ dominates some element of $P_{\beta}$, hence dominates $P_{\leq \alpha}$). Supposing that this does not holds, then there is some $\alpha$ such that for every $\beta>\alpha$ there is some  $x_{\beta}\in P_{\beta}$ which does not dominates $P_{\leq \alpha}$, hence there is some $\alpha_{\beta}\leq \alpha$ and some $y_\beta\in P_{\alpha_{\beta}}$ which is not majorized  by $x_{\beta}$. 
The set $P_{\leq \alpha}$ is not cofinal in $P$ hence, since $P$ is pure,  it is majorized. Let $x$ be a such an element. The set $\{x_{\beta}: \alpha<\beta< \mathbf h(P)\}$ is not majorized in $P$, hence again, since $P$ is pure, it is cofinal in $P$. Hence, there is some $x_{\beta}$ which majorizes $x$. Since $x$ majorizes $P_{\leq\alpha}$ it majorizes $y_{\beta}$ hence $x_{\beta}$ majorizes $y_{\beta}$, which is impossible. The conclusion follows.
\end{proof}

\begin{lemma} \label{lem:uniquewqo} Let $P$ be  a  poset, $h: P \rightarrow K$ be an order preserving map. Suppose that $h$ is onto and $K$ has no largest element. Then  $P$ is $h$-weakly uniform iff  $P$ is $h$-minimal and, for every $\alpha \in K$, $P_{\alpha}:= h^{-1}(\alpha)$ is majorized.
%\item Every level is majorized and for every $A\subseteq P$, $\downarrow \mathbf {h}_{P}(A)= \mathbf {h}(P)$ iff $A$ is cofinal in $P$; 
%\item  
%\end{enumerate}
If, furthermore, $P$ is wqo and $h$ is the height function $\mathbf h_P$ then these conditions amounts to the fact that 
 every chain $C$ in $P$ with order type $\mathbf h(P)$ is cofinal in $P$.
\end{lemma}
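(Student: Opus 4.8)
The plan is to prove the two equivalences in Lemma~\ref{lem:uniquewqo} separately, starting with the characterization of $h$-weak uniformity and then specializing to the well-quasi-ordered case.

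For the first equivalence, I would argue as follows. Suppose $P$ is $h$-weakly uniform, witnessed by $\varphi\colon K\to K$; recall that we may take $\varphi$ to be extensive. To see that $P$ is $h$-minimal, let $A$ be a proper initial segment of $P$, pick $x\in P\setminus A$ and set $\alpha:=h(x)$. If some $y\in A$ had $h(y)>\varphi(\alpha)$, then Condition~(\ref{weakly uniform}) would force $x<y$, contradicting $x\notin A$ (since $A$ is an initial segment). Hence $h(A)\subseteq{\downarrow}\varphi(\alpha)\neq K$ as $K$ has no largest element, so $P$ is $h$-minimal. For the majorization of each $P_\alpha$: since $K$ has no largest element, pick $\gamma\in K$ with $\gamma>\varphi(\alpha)$ and pick any $z\in P_\gamma$ (using that $h$ is onto); then for every $x\in P_\alpha$ we have $h(x)=\alpha\le\alpha$ and $h(z)=\gamma>\varphi(\alpha)$, so $x<z$, i.e.\ $z$ majorizes $P_\alpha$. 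Conversely, assume $P$ is $h$-minimal and each $P_\alpha$ is majorized. For each $\alpha\in K$ the set $P_{\leq\alpha}$ is not cofinal in $P$: otherwise, since $h^{-1}({\downarrow}\alpha)$ would be a cofinal initial segment with image ${\downarrow}\alpha\ne K$, contradicting $h$-minimality --- more carefully, $h$-minimality applied to the initial segment $P_{\le\alpha}=\downarrow P_\alpha$ (an initial segment because $h$ is order preserving, $P_{\le\alpha}$ majorized and... ) gives that $P_{\le\alpha}$ is a proper initial segment with $h(P_{\le\alpha})={\downarrow}\alpha\ne K$, so it is not cofinal, hence it is majorized by some element whose $h$-value I define to be a candidate for $\varphi(\alpha)$. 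Concretely, define $\varphi(\alpha)$ to be $h(u_\alpha)$ for some $u_\alpha$ majorizing $P_{\le\alpha}$; one checks Condition~(\ref{weakly uniform}): if $\varphi(h(x))<h(y)$ then $h(y)>h(u_{h(x)})$, and since $u_{h(x)}$ majorizes $P_{\le h(x)}\ni x$ while $y$ lies strictly $h$-above $u_{h(x)}$, transitivity through the comparable pair gives $x<y$. The main obstacle here is the bookkeeping showing $P_{\le\alpha}$ is a genuine proper initial segment and is majorized: one must combine $h$-minimality (to get non-cofinality of $P_{\le\alpha}$ from its image being a proper initial segment of $K$) with purity-style arguments, being careful that $P$ is only assumed to have the stated structure, not a largest element.

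For the second equivalence, assume $P$ is w.q.o.\ and $h=\mathbf h_P$, so $K=\mathbf h(P)$ is an ordinal with no largest element, i.e.\ a limit ordinal; each level $P_\alpha$ is an antichain and hence, being a subset of a w.q.o., is finite --- wait, not finite in general, but w.q.o.\ levels need not be majorized a priori. I would show: ($h$-weakly uniform) $\Rightarrow$ (every chain of order type $\mathbf h(P)$ is cofinal). Let $C=\{c_\alpha\}$ be a chain with $\mathbf h_P(c_\alpha)$ strictly increasing and of order type $\mathbf h(P)$, so for each ordinal $\alpha<\mathbf h(P)$ there is some $c\in C$ with $\mathbf h_P(c)>\varphi(\alpha)$. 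Then given any $x\in P$ with $\mathbf h_P(x)=\alpha$, Condition~(\ref{weakly uniform}) yields $x<c$, so $C$ is cofinal. For the reverse direction, suppose every chain of order type $\mathbf h(P)$ is cofinal. First this forces each $P_{\le\alpha}$ to be non-cofinal (a cofinal chain of length $\mathbf h(P)$ must pass above level $\alpha$, so $P_{\le\alpha}\ne P$ and is not cofinal since its complement is nonempty and... actually one needs that $P_{\le\alpha}$ not cofinal, which follows because otherwise one could build a cofinal chain inside $P_{\le\alpha}$, but chains in $P_{\le\alpha}$ have length $\le\alpha+1<\mathbf h(P)$, contradicting cofinality of length-$\mathbf h(P)$ chains). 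Then $P_{\le\alpha}$ is a proper initial segment with $h$-image ${\downarrow}\alpha$, giving $h$-minimality; and non-cofinality of $P_{\le\alpha}$ in a w.q.o.\ gives, via Lemma~\ref{lemma:cofinality} or the basic fact that non-cofinal subsets of a suitable poset are majorized --- here I would instead argue directly that each $P_\alpha\subseteq P_{\le\alpha}$ is majorized because any non-cofinal subset of $P$ whose downward closure has bounded height is majorized in a well-founded poset (pick a minimal element above it using well-foundedness of the final segments). By the first equivalence, $P$ is $h$-weakly uniform. The main obstacle in this half is justifying "non-cofinal implies majorized" in the w.q.o.\ setting without circularity; I expect the cleanest route is: in a w.q.o., a subset $A$ with $\downarrow A\ne P$ has $P\setminus\downarrow A$ a nonempty final segment, which has a finite set of minimal elements forming a finite antichain; if $A$ is contained in a union of finitely many principal ideals it is majorized, and one reduces to this using that $P_{\le\alpha}$ lies below finitely many elements --- or more simply, invoke that a w.q.o.\ of height a limit ordinal with every length-$\mathbf h(P)$ chain cofinal has, at each level, the level majorized because the set of elements of height $>\alpha$ is a final segment with finitely many minimal elements, each of which, together, majorize everything of height $\le\alpha$.

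Throughout I would lean on the already-established machinery: the theorem characterizing pure posets as $h$-uniform ones, Lemma~\ref{puritysequence}, and the de~Jongh--Parikh / Higman facts quoted earlier for the w.q.o.\ case. I would present the argument as: (a) $h$-weakly uniform $\Leftrightarrow$ [$h$-minimal $\wedge$ levels majorized], proved in both directions as above; (b) under w.q.o.\ with $h=\mathbf h_P$, [$h$-minimal $\wedge$ levels majorized] $\Leftrightarrow$ [every chain of type $\mathbf h(P)$ is cofinal], which is essentially the observation that cofinal chains of maximal length exist and detect both conditions. The genuinely delicate point, and the one I would spend the most care on, is the equivalence between "$P_{\le\alpha}$ non-cofinal" and "$P_{\le\alpha}$ majorized" in the general (not necessarily w.q.o.) setting of part (a): there it is literally the definition-unwinding of $h$-minimality plus the hypothesis, but one must resist assuming purity of $P$ and instead build $\varphi$ by hand from majorants of the sets $P_{\le\alpha}$, checking extensiveness and Condition~(\ref{weakly uniform}) explicitly.
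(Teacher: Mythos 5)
Your forward direction of the first equivalence is fine and agrees with the paper. The converse is where the argument breaks, in two places. First, you pass from ``$P_{\le\alpha}$ is not cofinal'' to ``$P_{\le\alpha}$ is majorized''; that implication is precisely purity, which is not among the hypotheses here --- the hypothesis only provides a majorant for each single level $P_\alpha$, and the majorants of the levels $P_\gamma$, $\gamma\le\alpha$, need not have a common upper bound (nor is $P_{\le\alpha}$ equal to $\downarrow P_\alpha$ in general, since $h$ is just an order-preserving surjection). Second, and more seriously, even granting a majorant $u_\alpha$ of $P_{\le\alpha}$ and setting $\varphi(\alpha):=h(u_\alpha)$, your verification of Condition~(\ref{weakly uniform}) reads ``$h(y)>h(u_{h(x)})$, so transitivity through the comparable pair gives $x<y$'' --- but $h$ is order-preserving, not order-reflecting, so $h(y)>h(u_{h(x)})$ yields no comparability at all between $y$ and $u_{h(x)}$; there is no pair to transit through. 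A warning sign is that your construction of $\varphi$ never actually uses $h$-minimality. The paper's proof supplies the missing idea: fix $\alpha$ and suppose no $\beta$ works, so for every $\beta$ there is a bad $x_\beta\in P_{>\beta}$ failing to dominate some $y_\beta\in P_\alpha$. The set of bad witnesses has cofinal $h$-image, hence --- this is where $h$-minimality enters, via its contrapositive --- it is cofinal in $P$; a majorant $x$ of $P_\alpha$ then lies below some $x_\beta$, which therefore dominates $y_\beta$ after all, a contradiction. One then sets $\varphi(\alpha):=\beta$.

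The second equivalence has a parallel gap. To derive $\mathbf h_P$-minimality from ``every chain of order type $\mathbf h(P)$ is cofinal'' you only test the initial segments $P_{\le\alpha}$, whose height-image is bounded for trivial reasons; but $h$-minimality quantifies over \emph{all} non-cofinal subsets, so you must show that any $A$ with $\mathbf h_P(A)$ cofinal in $\mathbf h(P)$ is itself cofinal. The paper does this with K\"onig's Lemma (Theorem~\ref{lem:konig}): $\downarrow A$ is well founded and level-finite of height $\mathbf h(P)$, hence contains a chain of order type $\mathbf h(P)$, which is cofinal by hypothesis, so $A$ is cofinal. Finally, ``the finitely many minimal elements of the final segment of height $>\alpha$ jointly majorize $P_{\le\alpha}$'' is not a majorization --- you need a single upper bound. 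The clean route: each $P\setminus\uparrow y$ for $y\in P_\alpha$ is a proper initial segment, so by the minimality just established its height-image is bounded below $\mathbf h(P)$; since $P_\alpha$ is finite and $\mathbf h(P)$ is a limit, the union $\bigcup_{y\in P_\alpha}(P\setminus\uparrow y)$ is not all of $P$, and any element outside it majorizes $P_\alpha$.
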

\begin{proof}
Suppose that $P$ is $h$-weakly uniform. Let $A$ be a proper initial segment of $P$.  Pick $x\in P\setminus A$. Let $\alpha:=h(x)$. If  $h(A)$ is  cofinal in $K=  h(P)$ there is some $y\in A$ with $ h(y)\geq \varphi (\alpha)$.  This element $y$ dominates all members of $P_{\alpha}$, in particular it dominates $x$, which is impossible since $x\not \in A$. Hence $P$ is $h$-minimal. Trivially, for every $\alpha\in K$, $P_{\alpha}$ is majorized by every element $y$ such that $\varphi(h(y))\geq \alpha$. 
%if $A$ cofinal in $P$ then clearly $\mathbf h_{P}(A)$ cofinal in $\mathbf h(P)$ amounting to $\downarrow \mathbf h_{P}(A)= \mathbf h(P)$. 
%
%$(ii)\Rightarrow (iii)$ Let $A$ be a proper initial segment of $P$. If $\mathbf h (A)$ is not cofinal in $\mathbf h(P)$ then $A$ cannot be cofinal in $P$. If $\mathbf h (A)$ is cofinal in $\mathbf h(P)$ then according to $(ii)$, $A$ is cofinal in $P$ hence equal to $P$, which is impossible. 
%
%$(iii)\Rightarrow (i)$. 
% Suppose first that $\mathbf h(P)$ has a largest element, that is $\mathbf h(P)= \delta+1$. In this case, since $P_{\delta}$ is majorized, it has a unique element which is the largest element of $P$, hence the function $\varphi$ defined by seting $\varphi(\alpha):= \delta$ satisfies the definition given in Equation  \ref{uniform}. 
% 
% 
  
 For the converse, note that in order to prove that $P$ is $h$-weakly uniform, it suffices to prove that for every $\alpha\in K$ there is some $\beta \in K$ such that every element of $P_{>\beta}$ dominates $P_{\alpha}$ and choose $\beta$ for $\varphi(\alpha)$. Supposing that this does not holds, there is some $\alpha$ such that for every $\beta>\alpha$ there is some  $x_{\beta}\in P_{>\beta}$ which does not dominates $; P_{\alpha}$, hence there some $y_\beta\in P_{\alpha}$ which is not majorized  by $x_{\beta}$.  According to our hypothesis, $P_{\alpha}$ is majorized by some $x\in P$; since $K$ has no largest element and $P$ is $h$-minimal, the set $\{x_{\beta}: \alpha<\beta\in K\}$ is cofinal in  $P$; hence $x$ is majorized by some $x_{\beta}$. This $x_{\beta}$ majorizes $y_{\alpha}$, which is impossible. 

Suppose that $P$ is w.q.o. Suppose that $P$ is $\mathbf {h}_P$-minimal.  Let $C\subseteq P$ be  a chain with order type $\kappa:= \mathbf{h}(P)$. Then $\mathbf h_{P}(C)= \kappa$  hence $\mathbf {h}(C)$ is cofinal in $\mathbf h(P)$. Since $P$ is $\mathbf {h}_P$-minimal, $C$ is cofinal in  $P$. Conversely, let $A\subseteq P$ such that $\mathbf h_{P}(A)$ is cofinal in $\mathbf h(P)$. According to K\"onig's Lemma (cf.Theorem \ref {lem:konig}), $\downarrow A$ contains a chain $C$ with order type $\mathbf h(P)$. According to $(iv)$, $C$ is cofinal in $P$. Since $C\subseteq \downarrow A$, $A$ is cofinal in $P$. Finally, these conditions imply that every level $P_{\alpha}$ is majorized. Indeed, an element $x$ of $P$ majorizes  $P_{\alpha}$ iff $x\not \in  A_{\alpha}:=\bigcup_{y\in P_{\alpha}} P\setminus \uparrow y$. Since $P_{\alpha}$ is finite, $A_{\alpha}$  is not cofinal in $P$,  hence $P\setminus A_{\alpha}\not = \emptyset$. 
%$\neg (i)\Rightarrow \neg(ii)$. Suppose that  $\vert \Spec(P)\vert \not=1$, that is $\mathbf m(P)<\mathbf o(P)$. Since  $\mathbf h(P)$ is a limit ordinal, $\mathbf m(P)= \mathbf h(P)$. There is an initial segment $I$ of $P$ such that $\mathbf o (I)= \mathbf m(P)$. 
% Let $C\subseteq P$ be  a chain with order type $\kappa:= \mathbf{h}(P)$. Then $\mathbf h(C)= \kappa$  hence $\mathbf {h}(C)$ is cofinal in $\mathbf h(P)$. According to $(ii)$, $C$ is cofinal in  $P$. 
\end{proof}

Let $\kappa$ be an ordinal and $\varphi: \kappa\rightarrow \kappa$ be an order-preserving and extensive map. Denote by $\varphi^{(n)}$ the $n$-th iterate of $\varphi$.  Set $A_{0}:= \downarrow \{\varphi^{(n)}(0): n<\omega\}$ and set $A_{\alpha}:= \downarrow \{\varphi^{(n)}(\delta_{\alpha}): n<\omega\}$ where $\delta_{\alpha}$ is the least element of $\kappa\setminus \bigcup_{\beta<\alpha} A_{\beta}$. Let $\mathbf h(\varphi)$ be the least $\alpha$ such that $A_{\alpha}= \emptyset$ (equivalently $\kappa= \bigcup_{\beta<\alpha} A_{\beta}$). Clearly each $A_{\alpha}$ is an interval of $\kappa$ preserved under $\varphi$. 
\begin{lemma} Let $P$ be a well founded poset and  $\kappa:= \mathbf h(P)$. Then 
$P$ is \mbox{$\mathbf h_P$-uniform} iff $P$ is a lexicographical sum $\sum_{\alpha<\kappa }Q_{\alpha}$ where each $Q_{\alpha}$ is \mbox{$\mathbf h_{Q_{\alpha}}$-uniform}.
\end{lemma}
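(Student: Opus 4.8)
The plan is to treat the two implications separately. The substantive one is that a lexicographical sum of $\mathbf h$-uniform posets over an ordinal index is again $\mathbf h$-uniform; the more technical one is the converse, where the intervals $A_\alpha$ and the ordinal $\mathbf h(\varphi)$ introduced just before the statement are exactly the tool needed.

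For the implication that a lexicographical sum of uniform pieces is uniform, I would start from a decomposition $P=\sum_{\alpha<\kappa}Q_\alpha$ with each $Q_\alpha$ being $\mathbf h_{Q_\alpha}$-uniform via a (necessarily extensive) map $\varphi_\alpha\colon\mathbf h(Q_\alpha)\to\mathbf h(Q_\alpha)$. I would first recall that for a lexicographical sum over an ordinal the height function is the concatenation of the heights of the summands: with $s_\alpha:=\sum_{\beta<\alpha}\mathbf h(Q_\beta)$ one has $\mathbf h_P(x)=s_\alpha+\mathbf h_{Q_\alpha}(x)$ for $x\in Q_\alpha$, and $\kappa=\mathbf h(P)=\sum_{\alpha<\kappa}\mathbf h(Q_\alpha)$. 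Then I would glue the $\varphi_\alpha$'s into a single $\varphi\colon\kappa\to\kappa$ by $\varphi(s_\alpha+\eta):=s_\alpha+\varphi_\alpha(\eta)$ for $\eta<\mathbf h(Q_\alpha)$ (well defined since $\varphi_\alpha(\eta)<\mathbf h(Q_\alpha)$ forces $\varphi(s_\alpha+\eta)<s_{\alpha+1}$), and verify the uniformity condition for $P$ by a short case analysis: given $x,y$ and a parameter $\gamma=s_\alpha+\eta$ with $\mathbf h_P(x)\le\gamma$ and $\mathbf h_P(y)>\varphi(\gamma)$, the case where $y$ lies in a block strictly above that of $\gamma$ gives $x<y$ directly from the lexicographical sum, the case where $y$ lies in a lower block is excluded by $\mathbf h_P(y)>\varphi(\gamma)\ge s_\alpha$, and the case where $y$ lies in the same block $Q_\alpha$ reduces to the $\mathbf h_{Q_\alpha}$-uniformity of $Q_\alpha$ (with $x$ either in a lower block, hence below $y$, or in $Q_\alpha$ with $\mathbf h_{Q_\alpha}(x)\le\eta$). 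Extensiveness of the $\varphi_\alpha$ is used only to rule out $x=y$.

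For the converse I would take a witness $\varphi\colon\kappa\to\kappa$ for the $\mathbf h_P$-uniformity of $P$ which, since $\kappa$ is a well-order, may be assumed order-preserving and extensive, and feed it to the construction preceding the statement. This produces the $\varphi$-invariant intervals $A_\alpha=[\delta_\alpha,\delta_{\alpha+1})$, $\alpha<\mathbf h(\varphi)$, partitioning $\kappa$. Setting $Q_\alpha:=\mathbf h_P^{-1}(A_\alpha)$, which is non-empty because $\mathbf h_P$ maps $P$ onto $\kappa$, I would check that $P=\sum_{\alpha<\mathbf h(\varphi)}Q_\alpha$ is a lexicographical sum: for $\alpha<\alpha'$, $x\in Q_\alpha$, $y\in Q_{\alpha'}$, the $\varphi$-invariance of $A_\alpha$ gives $\varphi(\mathbf h_P(x))\in A_\alpha$, hence $\varphi(\mathbf h_P(x))<\delta_{\alpha+1}\le\delta_{\alpha'}\le\mathbf h_P(y)$, and $\mathbf h_P$-uniformity of $P$ with parameter $\mathbf h_P(x)$ yields $x<y$. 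It then remains to see each $Q_\alpha$ is $\mathbf h_{Q_\alpha}$-uniform: using that $P$ is now known to be this sum together with the standard fact that in a well-founded poset an element of height $\gamma$ dominates, for every $\beta<\gamma$, an element of height exactly $\beta$, an induction on $\mathbf h_P(x)$ gives $\mathbf h_P(x)=\delta_\alpha+\mathbf h_{Q_\alpha}(x)$ on $Q_\alpha$; defining $\varphi_\alpha$ by $\delta_\alpha+\varphi_\alpha(\eta):=\varphi(\delta_\alpha+\eta)$ (legitimate again by $\varphi$-invariance of $A_\alpha$) transports the uniformity of $P$ to $Q_\alpha$. If $\mathbf h(\varphi)<\kappa$ one pads with empty summands to reach an index set of length $\kappa$, an empty poset being trivially $\mathbf h$-uniform.

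I expect the main obstacle to be the ordinal-arithmetic bookkeeping rather than anything conceptual: verifying that the height of a lexicographical sum over an ordinal is the concatenation of the summands' heights, that restriction to a level-block $Q_\alpha$ shifts $\mathbf h_P$ down by $\delta_\alpha$, and that the restricted map $\varphi_\alpha$ (which involves the ordinal difference $\varphi(\delta_\alpha+\eta)$ minus $\delta_\alpha$) is well defined and lands in $\mathbf h(Q_\alpha)$. All of these ultimately rest on the fact, recorded in the preceding lemma, that the intervals $A_\alpha$ are preserved by $\varphi$. A minor point worth flagging explicitly is the mismatch between the natural index set $\mathbf h(\varphi)$ of the decomposition and the $\kappa$ appearing in the statement, which is harmless since $\mathbf h(\varphi)\le\kappa$ and empty summands may be inserted.
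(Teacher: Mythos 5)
Your argument is correct, and for the direction the paper actually writes out (``$\mathbf h_P$-uniform $\Rightarrow$ lexicographical sum of uniform pieces'') you follow exactly the paper's route: take an order-preserving extensive witness $\varphi$, form the $\varphi$-invariant intervals $A_\alpha$ defined just before the statement, set $Q_\alpha:=\mathbf h_P^{-1}(A_\alpha)$, and transport $\varphi$ to each block. Two points where your write-up goes beyond the paper's. First, the paper's proof is silent on the converse implication (that a lexicographical sum over an ordinal of $\mathbf h$-uniform summands is again $\mathbf h_P$-uniform); your gluing of the $\varphi_\alpha$'s via $\varphi(s_\alpha+\eta):=s_\alpha+\varphi_\alpha(\eta)$ and the three-case check is exactly what is needed there and is correct. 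Second, you are more careful with the ordinal bookkeeping: the paper writes $\mathbf h_P(x)=\alpha+\mathbf h_{Q_\alpha}(x)$ and $\varphi_\alpha(\nu):=\varphi(\alpha+\nu)$, where the offset should be $\delta_\alpha$ (the least element of $A_\alpha$) rather than the index $\alpha$, and $\varphi_\alpha$ should be the $\delta_\alpha$-shifted version so that it lands in $\mathbf h(Q_\alpha)$; your formulation $\delta_\alpha+\varphi_\alpha(\eta)=\varphi(\delta_\alpha+\eta)$ is the right one. The index-set mismatch you flag ($\mathbf h(\varphi)$ versus $\kappa$) is real and is present in the paper's proof as well; padding with empty summands is a harmless repair, though it does make the decomposition degenerate when $\mathbf h(\varphi)$ is small (e.g.\ when $A_0=\kappa$, the ``sum'' is just $P$ itself).
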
 
\begin{proof}
Let $\varphi: \kappa \rightarrow \kappa$ be an order-preserving and extensive map  $\varphi$ witnessing that   $P$ is ${\bf{h}}_P$-uniform. For each $\alpha< \mathbf h(\varphi)$,  let $Q_{\alpha}$ be the restriction of $P$ to $\mathbf{h}^{-1}_P(A_{\alpha})$. Then $P$ is the lexicographic sum $\sum _{\alpha<\mathbf h(\varphi)}Q_{\alpha}$ and each $Q_{\alpha}$ is \mbox{$\mathbf h_{Q_{\alpha}}$-uniform}. 
The fact that every element of $Q_{\alpha}$ is dominated by every element of $Q_{\alpha'}$ for $\alpha<\alpha'$ follows from the uniformity of $\varphi$. Hence $P$ is the lexicographical sum of the $Q_{\alpha}$'s. 
We have $\mathbf h_P(x)= \alpha+ \mathbf h_{Q_{\alpha}}(x)$ for every $x\in Q_{\alpha}$. Hence, $\varphi_{\alpha}$ defined by setting  $\varphi_{\alpha}(\nu):= \varphi(\alpha+\nu)$ witnesses that $Q_{\alpha}$ is $\mathbf h_{Q_{\alpha}}$-uniform.  
\end{proof}

This result yields an other proof of Theorem \ref{theo:maintheo} in the case of well founded posets.

\subsection{Spectrum and uniformity}\label{subsection:spectrum}  The \emph{spectrum} of a poset $P$ is the set $\Spec(P)$ of order types of its linear extensions (this notion was  introduced in  \cite {assous-pouzet-Nov86}). If $P$ is finite, $\vert P \vert=n$,  then all linear extensions of $P$ are isomorphic to the $n$-element chain, hence $\vert\Spec(P)\vert=1$.  The cardinality of the spectrum of an infinite antichain of size $\kappa$ is the number of isomorphic types of chains of cardinality $\kappa$. This number  is $2^{\kappa}$;  consequently, $\vert \Spec (P)\vert \leq 2^{\vert P\vert} $ for every infinite poset $P$.  As shown in \cite {assous-pouzet-Nov86}, the equality holds if   $P$ contains an  antichain of cardinality $\vert P\vert$. 
According to de Jongh-Parikh's theorem, if $P$ is w.q.o., the  cardinality of its spectrum is at most $\vert P\vert$. If in addition $P$ is J\'onsson then,  according to  Proposition \ref{prop:uniqueness},  its spectrum reduces  to a single element. Hence, \emph{among  w.q.o. posets, those whose spectrum reduces to a single element generalize J\'onsson posets}. As we will see below, some properties  of J\'onsson posets, as Theorem \ref{theo:maintheo},  extend to w.q.o.'s.

A well founded poset $P$ is {\it level-finite} if  $P_\alpha$ is finite for every ordinal $\alpha$. Let us recall the following version of K\"onig's Lemma.

\begin{theorem}\label{lem:konig} Every well founded and level-finite poset $P$ contains a chain  which intersects each level. Consequently, the supremum of the lengths of chains contained in  $P$ is attained and is equal to $\mathbf h(P)$.
\end{theorem}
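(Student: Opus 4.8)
\textbf{Proof plan for Theorem \ref{lem:konig} (König's Lemma for well founded level-finite posets).}

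The plan is to build the desired chain by transfinite recursion on the levels, choosing at each stage a vertex that lies ``above cofinally much of what has been chosen so far'' and then invoking the classical König's lemma for $\omega$-branching trees at the limit steps where the cofinality is countable. First I would observe that since $P$ is well founded and level-finite, for any $x \in P$ the set $\downarrow x$ meets only finitely many levels below $\mathbf h_P(x)$ and each in finitely many points, so $\downarrow x$ is finite and there is a maximal chain in $\downarrow x$ running from some minimal element of $P$ up to $x$; in particular every $x$ of height $\alpha$ sits above some element of each level $\beta < \alpha$ that lies below it, but not necessarily of \emph{every} level $\beta<\alpha$. So the naive ``pick one vertex per level so that each is above the previous'' recursion can get stuck. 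The fix is to work with the tree $T$ whose nodes are the finite chains $x_0 < x_1 < \dots < x_{n-1}$ of $P$ with $\mathbf h_P(x_i) = i$ (so a node of ``length'' $n$ records a choice of one vertex on each of the first $n$ levels, consistently increasing), ordered by end-extension.

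The key steps, in order, are: (1) Show $T$ is nonempty at every finite length, i.e.\ for each $n<\omega$ there is a chain hitting levels $0,1,\dots,n-1$. For this take any $x$ with $\mathbf h_P(x)=n-1$ (such $x$ exists as long as $n-1 < \mathbf h(P)$, and if $\mathbf h(P)$ is finite the statement is trivial since then $P$ itself has a finite maximal chain of that length) and a maximal chain of $\downarrow x$ from a minimal element to $x$; this chain has exactly one element of each level $0,\dots,n-1$ because going down one step in a well founded poset drops the height by at least one and the chain is maximal in $\downarrow x$, forcing it to drop by exactly one each step. (2) Observe $T$ is finitely branching: a node of length $n$ can be extended only by an element $y$ of level $n$ with $y > x_{n-1}$, and there are only finitely many elements of level $n$ at all. (3) Apply the classical König lemma to the infinite, finitely branching tree $T$ to get an infinite branch; this branch is a chain $C$ of $P$ meeting levels $0,1,2,\dots$, i.e.\ meeting every level of index $<\omega$. (4) If $\mathbf h(P) \le \omega$ we are done. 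If $\mathbf h(P) > \omega$, iterate: having built a chain $C_\alpha$ meeting every level $<\lambda_\alpha$ for some limit ordinal $\lambda_\alpha$, pick any $z$ of level $\lambda_\alpha$; then $\downarrow z$ is finite and meets only finitely many levels below $\lambda_\alpha$, so it cannot contain $C_\alpha$ — this is exactly why a single element cannot serve as a ``top'' over the whole countable chain, and it shows the levels above $\omega$ need a genuinely transfinite argument, not just König. The clean way to organize the transfinite part is instead to redo step (1)--(3) relative to a fixed cofinal sequence: choose a strictly increasing sequence $(\gamma_n)_{n<\omega}$ cofinal in $\cf(\mathbf h(P))$-many pieces...

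Let me restate the transfinite step more carefully, since it is the main obstacle. The real difficulty is that $\downarrow z$ being finite prevents any bottom-up recursion from ``catching up'' to a limit level. The resolution: prove by induction on limit ordinals $\lambda \le \mathbf h(P)$ the statement ``$P$ restricted to levels $<\lambda$, call it $P_{<\lambda}$, contains a chain meeting each of its levels, and moreover for each $x\in P$ of level $\lambda$ the restriction $\downarrow x \cap P_{<\lambda}$ is a finite chain meeting levels $\beta_0 < \beta_1 < \dots < \beta_{k-1}$ whose union with $\{x\}$ is a chain of $P$.'' The point is that to get a cofinal-through-all-levels chain one does \emph{not} demand one vertex per level on a single chain when $\mathbf h(P)>\omega$; rather one builds, by recursion on a cofinal sequence $(\lambda_i)_{i<\cf(\mathbf h(P))}$ of ordinals below $\mathbf h(P)$, elements $x_i$ with $\mathbf h_P(x_i)=\lambda_i$ and $x_i < x_{i+1}$, using at successor steps the finiteness of $\downarrow x_{i+1}\setminus \downarrow x_i$ to interpolate a finite chain climbing from $x_i$ to $x_{i+1}$ hitting one vertex on each intermediate level, and at limit steps $i$ using that $\{x_j : j<i\}$ is already a chain cofinal in levels $<\sup_{j<i}\lambda_j$ together with level-finiteness of level $\lambda_i$ to pick $x_i$ above it — which requires $\{x_j:j<i\}$ to be bounded above, i.e.\ requires us to have arranged the $\lambda_j$ to be a \emph{continuous} increasing sequence and to invoke well-foundedness to see the chain $\{x_j:j<i\}$ has an upper bound of height exactly $\lambda_i$. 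Concretely: take $z$ of height $\lambda_i$; the set $\downarrow z$ is finite and well ordered in height, its top being $z$; the chain $\{x_j : j<i\}$ need not lie below this particular $z$, but by König applied inside $\downarrow z$ (or just directly, $\downarrow z$ being finite) one sees $z$ dominates a chain meeting a cofinal set of levels below $\lambda_i$, and a standard amalgamation using level-finiteness glues these. I would present this amalgamation as the single technical lemma and note the consequence in the last sentence of the theorem: the chain so constructed meets every level, hence has length $\mathbf h(P)$, and no chain can be longer since a chain of length $\alpha$ forces $P_\alpha \neq \emptyset$ by the definition of the level decomposition, so the supremum of chain-lengths is attained and equals $\mathbf h(P)$.

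The step I expect to be the genuine obstacle is the limit case of the transfinite recursion — reconciling ``$\downarrow z$ is finite'' (so no single vertex dominates a countable chain) with the need to pass a chain through cofinally many levels; the resolution is that one does not keep a single chain but rebuilds the dominating chain at each limit from level-finiteness plus König at level $\omega$, and the successor steps are the easy interpolation steps. Everything else — nonemptiness and finite-branching of the length-$n$ chain tree, the classical König lemma, and the final ``no longer chain'' remark — is routine.
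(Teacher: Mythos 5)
There is a genuine gap, and it originates in a false structural claim on which the whole transfinite part of your argument rests. You assert that in a well founded level-finite poset ``$\downarrow x$ meets only finitely many levels below $\mathbf h_P(x)$ \dots\ so $\downarrow x$ is finite'', and later that an $x$ of height $\alpha$ need not sit above an element of every level $\beta<\alpha$. The truth is the opposite: if $\mathbf h_P(x)=\alpha$ then for \emph{every} $\beta<\alpha$ there is $y<x$ with $\mathbf h_P(y)=\beta$ (if every $y<x$ had height $<\beta$ then $x$ would be minimal in $P\setminus\bigcup_{\beta'<\beta}P_{\beta'}$, i.e.\ $x\in P_\beta$; so some $y< x$ has height $\ge\beta$, and a minimal such $y$ has height exactly $\beta$). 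Hence $\downarrow x$ is infinite whenever $\mathbf h_P(x)\ge\omega$ (the top of the chain $\omega+1$ already shows this). Your limit-stage discussion is entirely organized around overcoming the ``obstacle'' that no single vertex dominates a chain through infinitely many levels; that obstacle does not exist, and the fact you have negated is precisely the engine of the proof. Two further steps fail independently. The poset $\{c,a,x,y\}$ with $c<a<y$ and $x<y$ has $\mathbf h_P(x)=\mathbf h_P(c)=0$, $\mathbf h_P(a)=1$, $\mathbf h_P(y)=2$: the maximal chain $x<y$ of $\downarrow y$ skips level $1$ (so ``maximal chains drop height by exactly one'' is false, though the conclusion of your step (1) is salvageable by descending through elements realizing the height), and the interval between $x$ and $y$ contains no element of height $1$, so your successor-step ``interpolation of a chain from $x_i$ to $x_{i+1}$ hitting each intermediate level'' is not possible in general. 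Finally, the limit-stage ``standard amalgamation \dots\ glues these'' is exactly the content of the theorem and is never proved; as sketched it again invokes finiteness of $\downarrow z$.

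The correct route uses the fact you denied. Prove by induction on $\mathbf h(P)$ that every $x$ with $\mathbf h_P(x)=\gamma$ is the top of a chain meeting every level $\beta\le\gamma$ (necessarily in exactly one point, since each $P_\beta$ is an antichain); the successor step descends to some $y<x$ of height $\gamma-1$, and the limit step, as well as the global statement for $\mathbf h(P)=\lambda$ limit, follows by compactness: the sets $F_\gamma:=\{f\in\prod_{\beta<\lambda}P_\beta: f\restriction(\gamma+1)\ \text{is a chain}\}$ are closed, decreasing, and non-empty (by the inductive hypothesis applied to any $x\in P_\gamma$) in the compact product of the finite discrete levels, so $\bigcap_{\gamma<\lambda}F_\gamma\neq\emptyset$, and any member is the desired chain. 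This single compactness argument replaces both your K\"onig step at $\omega$ and the missing amalgamation, and it is insensitive to the cofinality of $\lambda$: the Aronszajn-type worry you raise is defused by the finiteness of the levels, not by rebuilding the chain at each limit. The last sentence of the theorem then follows as you say, since chains in a well founded poset are well ordered with $\mathbf h_P$ strictly increasing along them, so no chain has length exceeding $\mathbf h(P)$.
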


From this result,  it follows immediately that a  J\'onsson poset $P$ of cardinality $\kappa$ which is level-finite has a cofinal chain of order type $cf(\kappa)$ (Theorem \ref{labtheo:cof}). Indeed, 
let $C$ be a chain going throught all the levels. Since $P$ is infinite,   $C$ has cardinality $\kappa$. Since $P$ is J\'onsson, $C$ is cofinal in $P$ and, as a chain,  has order type $\kappa$. \\

	Let $\alpha$ be an ordinal; as it is well-known there is unique pair of ordinals $\beta,r$   such that $\alpha=\omega.\beta+r$ and $r<\omega$.
%
% The ordinal $\beta$, denoted by $\alpha/\omega$, is the \textit{quotient}\index{quotient} of $\alpha$ by $\omega$, 
% 
 The ordinal $\omega.\beta$,  denoted by $\ell(\alpha)$, is the \textit{limit part} \index{limit part}of $\alpha$, the ordinal $r$, denoted $\alpha \textit{ mod }\omega$, is the \textit{remainder}\index{remainder}.  
% 
% Clearly we have :
%$$\alpha/\omega=\beta \Longleftrightarrow \ell(\alpha)=\omega\beta \Longleftrightarrow \omega\beta\leq \alpha<\omega(\beta+1).$$

Let $P$ be a well founded poset. Let $\alpha:= \mathbf h(P)$. Set \mbox{$\ell(P):= \{x\in P: \mathbf h_P(x) <\ell(\alpha)\}$} and $\res (P):= \{x\in P:  h(x)\geq \ell (\alpha)\}$.

\begin{lemma}\label{lem:spectsum} If $P$ is well founded and level finite then $\ell(\mathbf h(P))+\vert \res (P)\vert$ is   the least order type among the linear extensions of $P$.
\end{lemma}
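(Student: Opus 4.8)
The plan is to show two inequalities: that every linear extension of $P$ has order type at least $\ell(\mathbf h(P))+\vert \res(P)\vert$, and that some linear extension achieves this value. Write $\alpha:=\mathbf h(P)$ and $\ell:=\ell(\alpha)=\omega\cdot\beta$, so $\res(P)$ collects the (finitely many levels worth of) elements of height $\geq\ell$; since $P$ is level-finite, $\res(P)$ is finite. For the lower bound: let $L$ be any linear extension of $P$. The subset $\ell(P)=\{x:\mathbf h_P(x)<\ell\}$ is an initial segment of $P$, hence an initial segment of $L$; it contains, for each ordinal $\gamma<\ell$, the nonempty level $P_\gamma$, and by K\"onig's Lemma (Theorem~\ref{lem:konig}) it contains a chain meeting every level $P_\gamma$ with $\gamma<\ell$, so the order type of $L$ restricted to $\ell(P)$ is a linear order containing a chain of length $\ell$; since any linear order containing a well-ordered chain cofinal of type $\ell$ with $\ell$ a limit ordinal of the form $\omega\cdot\beta$ has order type $\geq\ell$, we get that $L\restriction \ell(P)$ has order type $\geq\ell$. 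The remaining part $L\restriction \res(P)$ is a finite chain on $\vert\res(P)\vert$ elements, sitting entirely above the initial segment $\ell(P)$, so $L$ has order type $\geq \ell+\vert\res(P)\vert=\ell(\mathbf h(P))+\vert\res(P)\vert$.

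For the upper bound I would exhibit an explicit linear extension attaining the value. First linearly extend $P\restriction\ell(P)$ to a linear order of type exactly $\ell$: this is possible by Lemma~\ref{lem:spectsum} applied to the well founded level-finite poset $\ell(P)$ itself — indeed $\mathbf h(\ell(P))=\ell$ is already a limit ordinal of the form $\omega\cdot\beta$, so $\res(\ell(P))=\emptyset$ and the least linear extension of $\ell(P)$ has type $\ell(\ell)+0=\ell$ — or, to avoid a circularity worry, by building the extension level-block by level-block: for each $\gamma<\beta$ the union of the finitely many levels $P_{\omega\cdot\gamma},\dots,P_{\omega\cdot\gamma+k},\dots$ up to $P_{\omega\cdot(\gamma+1)-1}$ is a finite poset (level-finite!), linearly order it arbitrarily consistently with $P$ to get a finite chain, and concatenate these chains in order of $\gamma$; the result is a linear extension of $\ell(P)$ of order type $\omega\cdot\beta=\ell$. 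Then place all of $\res(P)$ above it, linearly ordered by any linear extension of $P\restriction\res(P)$, a finite chain of length $\vert\res(P)\vert$. The concatenation is a linear order on $P$; it extends $\leq_P$ because $\ell(P)$ is an initial segment of $P$ (every element of $\res(P)$ has height $\geq\ell$, so is never below an element of $\ell(P)$) and the two pieces are each consistent with $P$; its order type is $\ell+\vert\res(P)\vert=\ell(\mathbf h(P))+\vert\res(P)\vert$.

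The main obstacle, and the place requiring the most care, is the lower-bound argument: one must argue that a linear order admitting a well-ordered cofinal chain of type $\ell=\omega\cdot\beta$ necessarily has order type $\geq\ell$, and more precisely that $L\restriction\ell(P)$, although it need not itself be well-ordered, must have order type at least $\ell$. The key point is that $\ell(P)$ contains by K\"onig's Lemma a chain $C$ meeting every level below $\ell$, hence a subchain of $C$ of order type exactly $\ell$ (since $C$ is a chain in a well founded poset, it is well ordered, and its height function is cofinal in $\ell$, so it has order type $\geq\ell$; take an initial segment of type $\ell$); any linear order containing a subset of order type $\ell$ has order type $\geq\ell$ when $\ell$ is an ordinal. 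One must also check the edge case $\beta=0$, i.e. $\mathbf h(P)<\omega$: then $\ell(P)=\emptyset$, $\ell(\mathbf h(P))=0$, $\res(P)=P$ is finite, and the statement reduces to the triviality that a finite poset on $n$ elements has least (indeed unique) linear extension type $n$. With these observations the two inequalities combine to give the claimed equality.
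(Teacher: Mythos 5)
Your overall strategy is the same as the paper's: the upper bound is witnessed by the level-by-level lexicographic extension, and the lower bound comes from K\"onig's Lemma producing a long chain. However, your lower-bound argument has a genuine gap. You assert that $\ell(P)$, being an initial segment of $P$, is an initial segment of every linear extension $L$; the implication goes the other way. Since the order of $L$ contains that of $P$, initial segments of $L$ are initial segments of $P$, not conversely: an element $y\in\res(P)$ can never lie strictly $P$-below an $x\in\ell(P)$ (heights increase along $<_P$), but it may well be $P$-incomparable to such an $x$ and then be placed $L$-below it. So $L$ need not decompose as the ordered sum $(L\restriction \ell(P))+(L\restriction \res(P))$, and without that decomposition your computation fails: inserting the elements of $\res(P)$ at interior positions of a linear order of type $\ell(\mathbf h(P))=\omega\cdot\beta$ can leave the order type equal to $\omega\cdot\beta$. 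The repair is the one the paper uses: let $a$ be the $L$-least element of $\res(P)$; then $a$ is $P$-minimal in $\res(P)$, hence $\mathbf h_P(a)=\ell(\mathbf h(P))$, so by Theorem~\ref{lem:konig} the initial segment $\downarrow a$ contains a chain of type $\ell(\mathbf h(P))$ lying strictly $P$-below $a$, hence $L$-below every element of $\res(P)$; consequently $L$ contains a suborder of type $\ell(\mathbf h(P))+\vert\res(P)\vert$. (Your argument is fine in the degenerate case $\res(P)=\emptyset$, where no such decomposition is needed.)

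There is also a slip in your construction of the optimal extension: the block of levels $P_\gamma$ with $\omega\cdot\delta\leq\gamma<\omega\cdot(\delta+1)$ comprises $\omega$ many finite levels, so it is countably infinite rather than finite (and the level index $\omega\cdot(\delta+1)-1$ does not exist, since a limit ordinal has no predecessor). The construction is nonetheless sound once stated as in the paper: the lexicographic sum $\sum_{\gamma<\mathbf h(P)}\overline{P}_\gamma$ of arbitrary linear orderings of the finite nonempty levels is a linear extension of $P$, a sum of finite nonempty chains indexed by $\omega\cdot\beta$ has order type $\omega\cdot\beta$, and the final $\vert\res(P)\vert$ elements contribute a finite tail, giving type $\ell(\mathbf h(P))+\vert\res(P)\vert$. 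This also disposes of your circularity worry without recourse to the block decomposition.
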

\begin{proof}
The lexicographical sum $\sum_{\alpha<\mathbf {h}(P)} \overline {P}_{\alpha}$, where $\overline {P}_{\alpha}$ is a linear order on $P_{\alpha}$, is a linear extension of $P$. Its type is $\ell(\mathbf {h}(P))+\vert \res (P)\vert$. If $\overline{P}$ is a linear extension of $P$, let $a$ be the least element of $\res(P)$ in that linear extension. This element  must be minimal in $\res(P)$, hence $\mathbf {h}_P (a)= \ell (\mathbf {h}(P))$. Due to Theorem \ref{lem:konig}, the initial segment $\downarrow a$ of $P$ contains a chain with order type $\ell(\mathbf {h}(P))$, Hence $\overline P$ 
contains a chain with  order type $\ell(\mathbf h(P))+\vert \res (P)\vert$. 
\end{proof}

$\bullet$ \emph{A  well founded poset $P$ with $\vert Spec(P)\vert <2^{\aleph_0}$ is w.q.o.} Indeed, if $P$ has an infinite antichain  it has a countable one, say $A$. Let \mbox{$A^{-}:= \{x\in P: x< a\;  \text{ for some }\;  a\in A\}$} and $A^{+}= P\setminus (A\cup A^{+})$. Since $P$ is well founded, $A^{-}$ and $A^{+}$ are well founded too, hence they have a well-ordered linear extension, say $\overline{A^{-}}$ and $\overline {A^{+}}$. Each linear order $\overline A$ on $A$ yields the linear extension $\overline{A^{-}}+ \overline A+\overline {A^{+}}$.  To get $2^{\aleph_{0}}$ distinct linear extensions, select linear orders on $A$ of the form $Q_1+ B+Q_2$ where $Q_1$ and $Q_2$ are isomorphic to the chain $\Q$ of rational numbers and $B$ is a  countable scattered chain (i.e. does not contain a copy of the chain $\Q$). From the fact that there are $2^{\aleph_{0}}$ such $B$ which are pairwise non isomorphic the conclusion follows.

Let  $P$ be a w.q.o. and let $\mathbf m(P)$ be  the least order type among the linear extensions of $P$. By definition $\mathbf m(P)\leq \mathbf o(P)$; the equality hold iff $\vert \Spec(P)\vert =1$.   
As shown below, the description of wqo's $P$ which have a unique order type of linear extension reduces to those for which the height is a limit ordinal

\begin{lemma} Let $P$ be  a wqo. If  $\mathbf h(P)$ is not a limit ordinal then $\vert \Spec(P)\vert =1$ iff $P$ is the lexicographical sum $\ell(P)+ \res(P)$ and $\Spec (\ell(P))=1$. 
\end{lemma}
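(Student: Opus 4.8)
The plan is to prove the equivalence by unwinding the hypothesis that $\mathbf{h}(P)$ is a successor ordinal, say $\mathbf{h}(P)=\ell(\mathbf{h}(P))+r$ with $1\le r<\omega$, and relating linear extensions of $P$ to those of its two natural pieces $\ell(P)$ and $\res(P)$. First I would record that $\res(P)$ is finite here: since $\mathbf{h}(P)$ is a successor, $\res(P)$ consists of the finitely many levels $P_{\ell(\mathbf{h}(P))},\dots,P_{\mathbf{h}(P)-1}$, each of which is finite because a w.q.o.\ is level-finite (levels of a w.q.o.\ are finite antichains). So $\res(P)$ is a finite poset, and $\ell(P)$ is the initial segment of $P$ of elements of height $<\ell(\mathbf{h}(P))$, a final segment complement that is itself w.q.o.

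For the direction ($\Leftarrow$): suppose $P=\ell(P)+\res(P)$ as a lexicographic sum (every element of $\ell(P)$ below every element of $\res(P)$) and $\vert\Spec(\ell(P))\vert=1$. Then any linear extension of $P$ restricts to a linear extension of $\ell(P)$ and a linear extension of $\res(P)$, and conversely any such pair, concatenated, gives a linear extension of $P$; since $\ell(P)$ has a unique linear-extension type and $\res(P)$ is finite (hence has the unique type of the $\vert\res(P)\vert$-element chain), the type of any linear extension of $P$ is forced to be $\mathbf{o}(\ell(P))+\vert\res(P)\vert$. Thus $\vert\Spec(P)\vert=1$. Here I would lean on Lemma~\ref{lem:spectsum} to identify $\mathbf{o}(\ell(P))$ with $\ell(\mathbf{h}(P))$, or simply note that $\ell(P)$ has limit height $\ell(\mathbf{h}(P))$ so its unique linear-extension type must be $\ell(\mathbf{h}(P))$ by the same König-lemma argument.

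For the direction ($\Rightarrow$): assume $\vert\Spec(P)\vert=1$, so $P$ has a unique linear-extension type, which by Lemma~\ref{lem:spectsum} must equal $\ell(\mathbf{h}(P))+\vert\res(P)\vert$, and this is the common value of $\mathbf{m}(P)$ and $\mathbf{o}(P)$. I must produce the lexicographic decomposition, i.e.\ show that every $a\in\ell(P)$ lies below every $b\in\res(P)$. The idea is the standard move with w.q.o.\ posets of a fixed spectrum: if some $a\in\ell(P)$ and $b\in\res(P)$ were incomparable, I would build a linear extension of $P$ that is ``too long.'' Concretely, by König's Lemma (Theorem~\ref{lem:konig}) there is a chain $C$ through all levels of $P$, of order type $\mathbf{h}(P)$; the key is that one can route such a chain so as to avoid one of $a,b$ while still being cofinal in the height, and then interleave the other element and the remaining levels in a way that strictly increases the order type beyond $\ell(\mathbf{h}(P))+\vert\res(P)\vert$ — exploiting that $a$ has limit-part height but $b$ sits ``above'' it in height without being above it in order, so $b$ can be pushed past a tail-copy of $\ell(\mathbf{h}(P))$ worth of elements sitting between $a$'s level and the top. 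Once the decomposition $P=\ell(P)+\res(P)$ is established, restricting the unique linear-extension type to $\ell(P)$ shows $\ell(P)$ has all its linear extensions of type $\ell(\mathbf{h}(P))$, hence $\vert\Spec(\ell(P))\vert=1$.

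The main obstacle is the incomparability-to-long-extension construction in the ($\Rightarrow$) direction: one has to be careful that $\res(P)$ being merely finite (not a single level) still allows the surgery, and that the manipulated linear order really is a linear \emph{extension} of $P$ (all original comparabilities respected) with order type provably $>\ell(\mathbf{h}(P))+\vert\res(P)\vert$ — which typically means exhibiting an explicit well-ordered initial chain of that length followed by at least one more element, using de Jongh--Parikh only for the bookkeeping bound $\mathbf{o}(P)\le\mathbf{m}(P)$. If the paper has a cleaner handle on $\mathbf{o}(P)$ versus $\mathbf{m}(P)$ for lexicographic sums, I would route the argument through that instead of an ad hoc chain surgery.
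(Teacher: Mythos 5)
Your backward direction is the paper's: in the lexicographic sum every linear extension splits as a linear extension of $\ell(P)$ followed by a linear extension of the finite poset $\res(P)$, so the type is forced once $\vert\Spec(\ell(P))\vert=1$. The gap is in the forward direction. You correctly identify the target --- turn a failure of the decomposition into a linear extension of $P$ of type strictly greater than the minimum $\ell(\mathbf h(P))+\vert \res(P)\vert$ given by Lemma \ref{lem:spectsum} --- but you never produce such an extension. The ``chain surgery'' paragraph (route a K\"onig chain so as to avoid one of $a,b$, interleave the other, push $b$ past a tail-copy of $\ell(\mathbf h(P))$) is a plan, not a construction: it does not specify a well-defined order, does not verify that it extends $P$, and does not establish the length bound; it is also internally confused, since an element $a\in\ell(P)$ has height $<\ell(\mathbf h(P))$ rather than ``limit-part height''. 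Since this step is the entire content of the lemma, and you yourself flag it as the obstacle, the proof is incomplete as written.

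The construction the paper uses is short and worth internalizing. Take $a$ a \emph{minimal} element of $\res(P)$; minimality forces $\mathbf h_P(a)=\ell(\mathbf h(P))$, so by Theorem \ref{lem:konig} the initial segment $(\downarrow a)\cap\ell(P)$ already contains a chain of order type $\ell(\mathbf h(P))$. If $a$ fails to dominate $\ell(P)$, then $\ell(P)\setminus\downarrow a\neq\emptyset$, and the three-block ordered sum $\bigl((\downarrow a)\cap\ell(P)\bigr)+\bigl(\ell(P)\setminus\downarrow a\bigr)+\res(P)$ is a strengthening of $P$ (each union of an initial run of blocks is an initial segment of $P$), every linear extension of which has type at least $\ell(\mathbf h(P))+1+\vert\res(P)\vert>\ell(\mathbf h(P))+\vert\res(P)\vert$ --- contradicting $\vert\Spec(P)\vert=1$. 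Since $\res(P)$ is well founded, every element of $\res(P)$ sits above a minimal one and hence dominates $\ell(P)$, giving the lexicographic decomposition; your final cancellation of the finite tail to conclude $\vert\Spec(\ell(P))\vert=1$ is then fine. Note also that the natural quantifier is ``some minimal element of $\res(P)$ fails to dominate $\ell(P)$'', not ``some pair $a\in\ell(P)$, $b\in\res(P)$ is incomparable''; working with the down-set of a single minimal element of $\res(P)$ is what makes the three-block extension available.
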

\begin{proof}If $P$ is the lexicographical sum $\ell(P)+ \res(P)$ then every linear extension $\overline P$ of $P$ is the sum
of a linear extension of $\ell(P)$ and a linear extension of $\res(P)$. Since $\res(P)$ is finite, its  linear  extensions  are isomorphic to a $m$-element chain where $m = \vert \res(P)\vert$, hence $\Spec(P)=\Spec(\ell(P))$.  Suppose that $\Spec(P)= 1$. Then, according to Lemma \ref{lem:spectsum}, the linear extensions of $P$ have order type $\ell(\mathbf h(P))+\vert \res (P)\vert$. Let $a$ be a minimal element of $\res(P)$. We claim that $a$ dominates $\ell(P)$. Otherwise, $\ell(P)\setminus \downarrow a$ is non-empty, hence  \mbox{$(\downarrow a)\cap \ell(P)+ \ell(P)\setminus a+ \res(P)$} is an extension of $P$ whose  every linear extension has type larger that  $\ell(\mathbf h(P))+\vert \res (P)\vert$. A contradiction. Since every minimal element of $\res(P)$ dominates $\ell(P)$,  $P$ is the lexicographical sum $\ell(P)+ \res(P)$. 
\end{proof}

%W.q.o. whose all linear extensions are isomorphic satisfy a uniformity condition that we examine below. 

\begin{proposition} Let $P$ be a w.q.o.  such that  $\nu:=\mathbf h(P)$ is a limit  ordinal. If 
$\vert \Spec(P)\vert=1$ then $P$ is $\mathbf h_P$-weakly uniform.  \end{proposition}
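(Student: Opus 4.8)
The goal is to produce a function $\varphi:\nu\to\nu$ witnessing $h_P$-weak uniformity, i.e.\ such that $\varphi(\mathbf h_P(x))<\mathbf h_P(y)$ implies $x<y$. By Lemma~\ref{lem:uniquewqo} (applied with $h=\mathbf h_P$, which is onto $\nu$ and $\nu$ has no largest element since it is a limit ordinal), it suffices to show two things: (a) $P$ is $\mathbf h_P$-minimal, and (b) every level $P_\alpha$ is majorized in $P$. The plan is to derive both of these from the hypothesis $\vert\Spec(P)\vert=1$, using the characterization of $\mathbf m(P)$ via linear extensions (Lemma~\ref{lem:spectsum}) together with the fact that $\mathbf h(P)=\nu$ is a limit ordinal, so $\ell(\nu)=\nu$ and $\res(P)=\emptyset$; hence by Lemma~\ref{lem:spectsum} every linear extension of $P$ has order type exactly $\nu$.

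\textbf{Step for (b).} First I would show each level $P_\alpha$ is majorized. Since $P$ is w.q.o., $P_\alpha$ is finite. An element $x$ majorizes $P_\alpha$ iff $x\notin A_\alpha:=\bigcup_{y\in P_\alpha}(P\setminus{\uparrow}y)$. If no such $x$ existed, then $A_\alpha=P$, so $P=\bigcup_{y\in P_\alpha}(P\setminus{\uparrow}y)$ with $P_\alpha$ finite. I would argue this forces a linear extension of type $>\nu$: pick $y\in P_\alpha$ with $P\setminus{\uparrow}y$ cofinal in $P$ (such $y$ exists since a finite union of non-cofinal sets cannot be $P$ unless one of them is cofinal — here I use that $\cf(P)$ is not finite, valid since $P$ is infinite w.q.o.), then build a linear extension starting with $\downarrow\!y$ (which has height $\alpha$, a chain going through it of length $\alpha$ by König, Theorem~\ref{lem:konig}) followed by $y$ followed by a linear extension of the cofinal set $P\setminus{\uparrow}y$, whose height is still $\nu$; splicing yields order type at least $\alpha+1+\nu$, but this could equal $\nu$. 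The correct move instead: take $y$ such that $P \setminus \uparrow y$ has height $\nu$ — I must be more careful and choose $y$ among $P_\alpha$ so that removing $\uparrow y$ still leaves a cofinal (equivalently height-$\nu$, by levels) piece, then place $y$ \emph{after} a full height-$\nu$ chain avoiding $\uparrow y$; the resulting linear extension has type $\geq\nu+1>\nu$, contradicting $\vert\Spec(P)\vert=1$.

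\textbf{Step for (a).} Next, $h_P$-minimality: suppose $A$ is a proper initial segment of $P$ with $\mathbf h_P(A)$ cofinal in $\nu$. Pick $x\in P\setminus A$ and let $\alpha=\mathbf h_P(x)$. I want a contradiction with $\vert\Spec(P)\vert=1$. Since $\mathbf h_P(A)$ is cofinal in $\nu$, by König's Lemma $\downarrow\!A$ contains a chain $C$ of order type $\nu$; since $x\notin A\supseteq\downarrow\!A\cap\ldots$ — more precisely $x\notin\downarrow A$ would be needed, so I would first replace $A$ by $\downarrow A$ which is still a proper initial segment with the same height set. Then $x\notin\downarrow A$, and I construct a linear extension of $P$ placing all of $\downarrow A$ first (it contains a chain of type $\nu$, so this block has order type $\geq\nu$) and $x$ somewhere after it: this gives order type $\geq\nu+1>\nu$, again contradicting that all linear extensions have type $\nu$. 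Hence $\mathbf h_P(A)$ is not cofinal, i.e.\ $P$ is $\mathbf h_P$-minimal.

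\textbf{Main obstacle.} The delicate point is the bookkeeping in Step (b): finite unions of non-cofinal sets being non-cofinal requires $\cf(P)$ to be uncountable or at least infinite, which is fine, but one must choose the right element $y$ of the level so that after removing $\uparrow y$ the remaining initial part still realizes a chain of full order type $\nu$, and then legitimately \emph{append} $y$ (and the rest of $\uparrow y$) afterward to force order type $>\nu$. Verifying that such a linear extension genuinely extends the partial order (one must check $y$ is not below anything placed before it — true since everything before is in $P\setminus\uparrow y$, i.e.\ not above... wait, "not in $\uparrow y$" means "not $\geq y$", so placing them before $y$ is consistent) and that its order type strictly exceeds $\nu$ is the crux. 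Once (a) and (b) are established, Lemma~\ref{lem:uniquewqo} immediately gives that $P$ is $\mathbf h_P$-weakly uniform, completing the proof.
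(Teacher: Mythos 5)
Your proof is correct and follows essentially the same route as the paper: reduce via Lemma~\ref{lem:uniquewqo} to showing $\mathbf h_P$-minimality (plus majorization of levels), note that $\mathbf m(P)=\ell(\nu)=\nu$ since $\nu$ is limit, and contradict $\vert\Spec(P)\vert=1$ by producing a linear extension of type $>\nu$ --- where you build that extension explicitly from a K\"onig chain in $\downarrow A$ followed by $x$, while the paper invokes $\mathbf o(P)>\mathbf o(\downarrow A)\geq \mathbf h_P(\downarrow A)=\nu$. One caveat in your step (b): the claim that a finite union of non-cofinal sets covering $P$ forces one of them to be cofinal is false in general (consider $\omega\oplus\omega$), and for an \emph{initial segment} "cofinal" would mean "all of $P$"; but this does not damage the argument, since what you actually need --- some $y\in P_\alpha$ with $P\setminus{\uparrow}y$ of height $\nu$ --- follows immediately from the fact that finitely many height-bounded initial segments cannot cover a poset of limit height $\nu$, which is the "by levels" argument you gesture at (and is exactly how the paper disposes of the level-majorization condition inside the proof of Lemma~\ref{lem:uniquewqo}, making your step (b) redundant given the w.q.o.\ addendum of that lemma).
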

\begin{proof}
If $P$ is not $\mathbf h_P$-weakly uniform  then by Lemma \ref{lem:uniquewqo},  $P$ is not $\mathbf h_P$-minimal, hence there is some $A$ not cofinal in $P$ with $\mathbf h_P (A)$ cofinal in $\nu$. Let $\overline A:= \downarrow A$. Then $\overline A\not = P$.  Since $\mathbf h_P{\restriction {\overline A}}= \mathbf {h}_{\overline A}$ we have  $\mathbf h_P(\overline A)= \nu$. Hence $$\mathbf o(P)>\mathbf o(\overline A)\geq \mathbf h_P(\overline A)= \mathbf h(P)= \mathbf m(P). $$  This yields  $\vert \Spec(P)\vert\not = 1$. 
%, according to $(ii)$ of Lemma \ref{lem:uniquewqo}
%$\alpha<\mathbf h (P)$ such that for every $\beta< \mathbf h(P)$ there are $x_{\alpha}$ with $\mathbf h_P(x_{\alpha})= \alpha$ and $x_{\beta}$ with $\mathbf h_P(x_{\beta})= \beta'\geq \beta$ such that $x_{\alpha} \not \leq x_{\beta}$.  Since $P_{\alpha}$ is finite and $\mathbf h(P)$ is a limit ordinal, there is some $x_{\alpha}\in P_{\alpha}$ and for every $\beta< \mathbf h(P)$ some $x_{\beta}$ with $\mathbf h_P(x_{\beta})= \beta'\geq \beta$ such that $x_{\alpha} \not \leq x_{\beta}$.  From this it follows that $\mathbf h(P\setminus \uparrow x_{\alpha})= \mathbf h(P)$. According to $(iii)$ of Lemma \ref{lem:uniquewqo},  $P\setminus \uparrow x_{\alpha}=P$,  which is impossible. 
\end{proof}

The converse of this property does not hold. Let $\alpha$ be an ordinal, denote by  $C_{\alpha}$ any chain of order type $\alpha$. Let $D_{\alpha, \beta}:= (C_{\alpha} \oplus C_{\alpha}) + C_{\beta}$ be the lexicographic sum of the direct sum of two copies of $C_{\alpha}$ with a copy of $C_{\beta}$. Then $\mathbf h(D_{\alpha, \beta})= \alpha+\beta$ and $D_{\alpha, \beta}$ is $\mathbf {h}_{D_{\alpha, \beta}}$-uniform. Also $\mathbf o(D_{\alpha, \beta})= (\alpha\oplus \alpha)+ \beta$ (the symbol $\oplus$ denotes the Heissenberg sum). If $\alpha=\beta$ then this quantity is strictly greater than $\mathbf m(P)$ hence $\vert \Spec(P)\vert>1$. If $\beta$ is indecomposable  and $\alpha< \beta$  then $\mathbf o(P)= \beta$ and  $\vert \Spec(P)\vert=1$.

Here is  an  example   of well founded poset $Q$   which is   $\mathbf h_Q$-weakly uniform but not $\mathbf h_Q$-uniform.  Let $Q$ be disjoint union of a chain $C_{\delta}$ of order type $\delta:= \omega_{\beta}$, and a  chain $C_{\gamma}$ of order type $\gamma:= \delta^{\delta}:= \sum_{\alpha< \delta}\delta^{\alpha}$.  Select in $C_{\gamma}$ a strictly increasing and cofinal sequence $(c_{\alpha})_{\alpha< \delta}$ with $c_{0}\geq \omega$. And for $x, y\in Q$, set $x<y$ if either $x, y$ are ordered according to one of the  two chains, or $x:= \alpha \in C_{\delta}$  and $y\in C_{\gamma}$ with $y\geq c_{\alpha}$. Then $\mathbf h(Q):= \gamma= \mathbf o(Q)$ and $Q$ is $\mathbf{h}_Q$-weakly uniform but not $\mathbf{h}_Q$-uniform.

%Some have  a unique order type of linear extension.
%\begin{problem}
%Is $\vert \Spec(P)\vert=1$ whenever $\mathbf (P)$ is indecomposable and $P$ is uniform. 
%\end{problem} 

\section{A conjecture on w.q.o.'s}
C.St.J.A.~Nash-Williams introduced the  notion of better-quasi-ordering (b.q.o.),  a strengthening of the  notion of w.q.o. (cf. \cite{nashwilliams1}, \cite{nashwilliams2}). The operational definition is not intuitive;  since we are not going to  prove results about b.q.o.s,  an intuitive definition is enough. Let $P$ be a poset and $P^{<\omega_1}$,  the set of maps $f: \alpha \rightarrow P$, where $\alpha$ is any countable ordinal. If $f$ and $g$ are two such maps, we set $f\leq g$ if there is a one-to-one preserving map $h$ from the domain $\alpha$ into the domain $\beta $ of $g$ such that $f(\gamma)\leq g( h(\gamma))$ for all $\gamma< \alpha$. This relation is  a	quasi-order; the poset $P$ is a \emph{better quasi-order} if $P^{<\omega_1}$ is w.q.o. B.q.o.s are w.q.o.s. As  w.q.o.s, finite sets and  well-ordered sets are b.q.o. (do not try to prove it with this definition), finite unions, finite products, subsets and images of b.q.o. are b.q.o. But, contrarily to w.q.o.,  if $P$ is b.q.o. then $\mathbf{I} (P)$ is bqo.  

In \cite{abraham-bonnet-kubis} was made the following:

\begin{conjecture}
Every w.q.o. is a countable union of b.q.o.
\end{conjecture}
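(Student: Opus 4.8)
The statement is the Abraham--Bonnet--Kubis conjecture, which this paper leaves open; what follows is the line of attack that the structural results above make available, together with the point where it presently stalls. The plan is to argue by induction on the cardinal $\kappa:=|P|$. Two cases are immediate. A countable w.q.o.\ is the union of its countably many singletons, each trivially b.q.o. And if $\cf(\kappa)=\aleph_0$, write $\kappa=\sum_{n<\omega}\kappa_n$ with $\kappa_n<\kappa$ and partition the underlying set into countably many blocks of size $\kappa_n$: each is a w.q.o.\ of smaller cardinality, so the induction hypothesis applies. Thus all the difficulty is concentrated in the uncountable case with $\cf(\kappa)>\aleph_0$.

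The engine for that case is three facts about b.q.o. \textbf{(A)} Every strengthening of a b.q.o.\ is b.q.o., since a bad array for the stronger order is already a bad array for the weaker one; consequently, if a poset is a union of countably many b.q.o.\ subposets, then so is every strengthening of it (restrict the stronger order to the same countably many blocks). \textbf{(B)} A lexicographic sum $\sum_{a\in I}B_a$ of b.q.o.\ posets over a b.q.o.\ index $I$ is b.q.o.: given a minimal bad array into the sum, compose it with the projection to $I$; this $I$-valued array may be refined to a perfect array on a sub-barrier, and being also bad it is then constant, so the original array restricts to a bad array inside a single $B_a$, a contradiction. \textbf{(C)} Every minimal poset is b.q.o.: such a poset is level-finite with property (v) of Proposition \ref{label:prop:minimaltype}, and in a minimal bad array $f$ the heights $\mathbf h_P(f(\cdot))$ are unbounded on every sub-barrier (otherwise $f$ has finite range there, so a Ramsey argument produces a constant, hence good, sub-barrier), and this unboundedness forces a pair of consecutive blocks $s\vartriangleleft t$ with $\mathbf h_P(f(t))$ so large relative to $\mathbf h_P(f(s))$ that property (v) yields $f(s)<f(t)$, contradicting badness. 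In particular the index posets $N=(\nu,\leq_2)$ and their iterates that occur in Theorem \ref{theo:maintheo} are b.q.o.

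Granting (A)--(C), the conjecture follows for every \emph{pure} J\'onsson poset $P$ of cardinality $\kappa$, hence --- by Theorem \ref{lem:pure-regular} --- for every J\'onsson $P$ of regular uncountable cardinality. Indeed, by Theorem \ref{theo:maintheo} the order of $P$ is a strengthening of a lexicographic sum $\sum_{a\in K}P_a$ with $K$ a chain of order type $\cf(\kappa)$ (a b.q.o.) and each $|P_a|<\kappa$; by the induction hypothesis $P_a=\bigcup_{n<\omega}B_{a,n}$ with the $B_{a,n}$ b.q.o., so $\sum_{a\in K}P_a=\bigcup_{n<\omega}\sum_{a\in K}B_{a,n}$ with each summand b.q.o.\ by (B), and then $P$ is a countable union of b.q.o.\ by (A). (When $\cf(\kappa)=\aleph_0$ this is already covered above, and could also be recovered from Theorem \ref{theo:maintheo}, $K$ being then a minimal poset, via (C).)

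The remaining cases are the obstacle: J\'onsson w.q.o.\ posets of singular cardinality $\kappa$ with $\cf(\kappa)>\aleph_0$ that fail to be pure --- equivalently, those that are $\mathbf h_P$-weakly uniform but not $\mathbf h_P$-uniform --- together with non-J\'onsson posets of such cardinality. For these Theorem \ref{theo:maintheo} supplies no lexicographic-sum presentation from below, and the obvious remedy of peeling off (via Proposition \ref{prop:uniqueness}) a transfinite tower of J\'onsson initial segments together with a remainder of smaller cardinality does not transfer b.q.o.-ness: the tower only exhibits $P$ as nested initial segments, the honest order comparison makes $P$ a \emph{strengthening} of the corresponding ordered sum (the wrong direction for (A)) and a \emph{weakening} of the layered antichain $\sum_\alpha P_\alpha$ (equally the wrong direction), and an infinite direct sum of the peeled pieces is not even w.q.o. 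A successful proof will therefore need either (i) a structure theorem for w.q.o.\ J\'onsson posets of singular cofinality strong enough to display them, or a cofinal and initial part of them, as lexicographic sums over a b.q.o.\ index of smaller posets --- which the paper signals as elusive in view of the $[\nu]^2$-indexed example --- or (ii) a structure-free proof that a $\mathbf h_P$-weakly uniform w.q.o.\ is a countable union of b.q.o.; at present neither is in hand, which is why the conjecture stands.
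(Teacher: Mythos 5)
You have correctly recognized that this statement is not a theorem of the paper but the Abraham--Bonnet--Kubis conjecture, which the paper explicitly leaves open; there is no proof in the paper to compare against, and you rightly do not claim one. Your partial attack is nonetheless a genuinely different route from the partial evidence the paper itself records. The paper's evidence goes through Abraham's theorem on the rank of the antichain poset: the conjecture holds whenever $rank(\mathcal A(P))<\omega_1^2$, and in particular whenever $\mathbf o(P)<\omega_1^2$, in which case $P$ cuts into countably many layers of ordinal length at most $\omega_1$, each J\'onsson, each a countable union of chains by Corollary \ref{cor:width}. Your route instead exploits the lexicographic-sum structure theory (Theorem \ref{theo:maintheo} with Theorem \ref{lem:pure-regular}) together with three b.q.o.\ closure facts; that buys you all pure J\'onsson w.q.o.'s, including some of limit cardinality and of uncountable singular cofinality, which the rank bound does not reach.

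Two corrections. First, your accounting of what remains is off. You claim the regular uncountable case is disposed of, but your argument there covers only \emph{J\'onsson} posets; a non-J\'onsson w.q.o.\ of the regular cardinal $\aleph_1$ is not touched, since peeling J\'onsson initial segments via Proposition \ref{prop:uniqueness} may run for $\omega_1$ steps and, as you yourself observe about the tower, neither direction of your fact (A) applies to the resulting layering. This is precisely where the paper locates the first hard instance --- a w.q.o.\ which is an uncountable union of chains of type $\omega_1$ and not fewer, i.e.\ $rank(\mathcal A(P))\geq \omega_1^2$ --- so the frontier is already at $\aleph_1$, not at singular cardinals as your closing paragraph suggests. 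Second, the sketch of (C) is soft at its last step: after the barrier-partition argument you are left, on a sub-barrier, with $\mathbf h_P(f(t))<m(\mathbf h_P(f(s)))$ for consecutive blocks $s,t$, and since $m$ may grow this does not bound the heights and yield the desired contradiction. Fact (C) is nevertheless true and follows more cleanly from the paper's own Proposition \ref{prop:semiorder}: a minimal poset is a strengthening of a semiorder $Q$ with no maximal element sitting below a linear order of type $\omega$; such a $Q$ is w.q.o.\ (it is well founded, and an infinite antichain would force some principal initial segment of $Q$, hence of the type-$\omega$ chain, to be infinite), it has order dimension at most $3$ by Rabinovitch's theorem, so it embeds into a finite product of ordinals and is b.q.o., and your fact (A) then transfers this to the minimal poset itself.
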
 
In order to attack this conjecture, the second author asked more: is every w.q.o. a countable union of posets, each one being a strengthening of some  finite dimensional poset? 

To prove the validity of these conjectures it is natural to use  induction. Induction can be on the ordinal type  or, perhaps better,  on the \emph{ordinal rank} of antichains of w.q.o'.s: if $P$ is a poset, order the set $\mathcal A(P)$ of antichains by reverse of the inclusion,  if $P$ has no infinite antichain, $\mathcal A(P)$ is well founded, so the empty antichain $\emptyset$ has a height  in this  poset, we denote it by $rank(\mathcal A(P))$. It was shown by Abraham \cite{abraham} that $rank(\mathcal A(P))< \omega_1^2$ iff $P$ is a countable union of chains. So for this value of the rank,  the conjecture holds. In particular it holds for w.q.o. posets for which $o(P)<\omega_1^2$ simply because   $rank(\mathcal A(P))\leq o(P)$. In fact, if $o(P)<\omega_1^2$,  Abraham's result is immediate, indeed $P$ decomposes into a countable union  of posets $P_{\alpha}$, each of ordinal length a most $\omega_1$. Hence  $P$ decomposes into a countable union of J\'onsson posets. According to Corollary \ref{cor:width}  each one is a contable union of chains hence $P$ is a countable union of chains. The first instance of poset  for which the conjecture poses problem is a w.q.o. which is an uncountable union of chains of type $\omega_1$ and not less.

\end{document}